\tikzset{
  symbol/.style={
    draw=none,
    every to/.append style={
      edge node={node [sloped, allow upside down, auto=false]{$#1$}}}
  }
}
\newcommand{\Z}{\mathbb{Z}}
\newcommand{\R}{\mathbb{R}}
\newcommand{\BC}{\mathbb{C}}
\newcommand{\A}{{\mathbb A}}
\newcommand{\ol}{\overline}
\newcommand{\SL}{\mathrm{SL}}
\newcommand{\GL}{\mathrm{GL}}
\newcommand{\SO}{\mathrm{SO}}
\newcommand{\Sp}{\mathrm{Sp}}
\newcommand{\RO}{\mathrm{O}}
\newcommand{\RG}{\mathrm{G}}
\newcommand{\Supp}{\mathrm{Supp}}
\newcommand{\Sym}{\mathrm{Sym}}
\newcommand{\mfr}[1]{\mathfrak{#1}}
\newcommand{\Eseg}{\underline{\mathrm{Eseg}}}
\newcommand{\VEseg}{\underline{\mathrm{VEseg}}}
\newcommand{\half}[1]{\frac{#1}{2}}
\newcommand{\comment}[1]{}
\newcommand{\EE}{\mathcal{E}}
\newcommand{\EEE}{\mathfrak{E}}
\newcommand{\ee}{\mathfrak{e}}
\newcommand{\NV}{\mathrm{NV}}
\newtheorem{thm}{Theorem}[section]
\newtheorem{cor}[thm]{Corollary}
\newtheorem{lemma}[thm]{Lemma}
\newtheorem{prop}[thm]{Proposition}
\newtheorem {conj}[thm]{Conjecture}
\newtheorem {ques/conj}[thm]{Question/Conjecture}
\newtheorem{defn}[thm]{Definition}
\newtheorem{remark}[thm]{Remark}
\newtheorem{exmp}[thm]{Example}
\newtheorem*{globalcond*}{Global Condition}
\newtheorem*{localcond*}{Local Condition}
\newtheorem*{globalconj*}{Global Conjecture}
\newtheorem*{localconj*}{Local Conjecture}
\newtheorem*{nonzero*}{Conjecture on the non-vanishing of the normalized intertwining operators}
\newtheorem*{holo*}{Conjecture on the holomorphicity of the normalized intertwining operators}
\DeclareMathOperator{\Ind}{Ind}
\numberwithin{equation}{section}
\let\oldbullet\bullet
\renewcommand{\bullet}{{\vcenter{\hbox{\tiny$\oldbullet$}}}}
\begin{document}

\title[Complementary Arthur representations and unitary dual]{On the complementary Arthur representations and unitary dual for p-adic classical groups}

\author[A. Hazeltine]{Alexander Hazeltine
}
\address{Department of Mathematics\\
University of Michigan\\
Ann Arbor, MI 48109, USA}
\email{ahazelti@umich.edu}

\author[D. Jiang]{Dihua Jiang}
\address{School of Mathematics, University of Minnesota, Minneapolis, MN 55455, USA}
\email{dhjiang@math.umn.edu}

\author[B. Liu]{Baiying Liu}
\address{Department of Mathematics\\
Purdue University\\
West Lafayette, IN, 47907, USA}
\email{liu2053@purdue.edu}

\author[C.-H. Lo]{Chi-Heng Lo}
\address{Department of Mathematics\\
National University of Singapore\\
119076, Singapore}
\email{{ch\_lo@nus.edu.sg}}

\author[Q. Zhang]{Qing Zhang}
\address{School of Mathematics and Statistics, Huazhong University of Science and Technology, Wuhan, 430074, China}
\email{qingzh@hust.edu.cn}

\subjclass[2020]{Primary 11F70, 22E50; Secondary 11F85, 22E55}

\date{\today}

\keywords{Admissible Representations, Local Arthur Packets, Local Arthur Parameters, Arthur Representation, Unitary Dual, $p$-adic Classical Groups}

\thanks{The research of the second-named author is partially supported by the NSF Grant DMS-2200890 and the Simons Fellow: SFI-MPS-SFM-00005659. The research of the third-named author is partially supported by the NSF Grant DMS-1848058. The research of the fifth-named author is partially supported by NSFC Grant 12371010.}

\begin{abstract}
In \cite{HJLLZ24}, we proposed a new conjecture on the structure of the unitary dual of connected reductive groups over non-Archimedean local fields of characteristic zero based on their Arthur representations and verified it for all the known cases on the unitary dual problem. One step towards this conjecture involves the question  whether certain complementary Arthur representations are unitary. 
In this paper, we give an explicit characterization of the complementary Arthur representations for symplectic and split odd special orthogonal groups. As applications, we obtain interesting constraints on local components of irreducible self-dual cuspidal automorphic representations of $\GL_N$, especially when $N=2,3$. 
\end{abstract}

\maketitle


\section{Introduction}\label{sec intro}

Let $F$ be a non-Archimedean local field of characteristic zero and $\RG$ be a connected reductive group defined over $F$. Set $G=\RG(F)$ to be the $F$-rational points of $\RG$. 
In \cite{HJLLZ24}, towards the understanding of the unitary dual $\Pi_u(G)$, we proposed a new conjecture on how to construct the whole unitary dual $\Pi_u(G)$ in terms of the Arthur representations of $G$. More precisely, in \cite[Conjecture 6.1]{Art89}, Arthur proposed a conjectural theory on local Arthur packets, which are parameterized by local Arthur parameters $\Psi(G)$ and consist of representations occurring as local components of automorphic representations in the discrete spectrum (hence expected to be unitary) of $G$. Such representations are called the Arthur representations or the representations of Arthur type, denoted by 
$\Pi_A(G)$. Under the process of taking complementary series or unitary parabolic induction, we defined the ``closure" of $\Pi_A(G)$, which is denoted by $\Pi_{\ol{A}}(G)$, and another set $\Pi_{\ol{A}}^{\lim}(G)$ which contains $\Pi_{\ol{A}}(G) $ and its boundary points. Our new conjecture on the unitary dual can be stated as follows. 

\begin{conj}[{\cite[Conjecture 5.12]{HJLLZ24}}]\label{conj A closure}
Assuming the theory of local Arthur packets as in \cite[Conjecture 6.1]{Art89}, the unitary dual $\Pi_u(G)$ can be constructed from the Arthur representations of $G$, i.e., 
    $$\Pi_{\ol{A}}^{\lim}(G)=\Pi_u(G).$$ 
\end{conj}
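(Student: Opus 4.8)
The plan is to prove the equality $\Pi_{\ol{A}}(G)=\Pi_u(G)$ by establishing the two inclusions separately, as they call for quite different techniques. The inclusion $\Pi_{\ol{A}}(G)\subseteq\Pi_u(G)$ breaks into two assertions: that every representation of Arthur type is unitary, and that the two operations defining the closure --- complementary series deformation and unitary parabolic induction --- preserve unitarity when applied to representations of Arthur type. The reverse inclusion $\Pi_u(G)\subseteq\Pi_{\ol{A}}(G)$ is the assertion that there are no ``exotic'' unitary representations: every irreducible unitary representation can be reached from a representation of Arthur type by these operations.

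For the first assertion of the forward inclusion, the unitarity of representations of Arthur type is part of what is being assumed: under Arthur's conjecture the members of a local Arthur packet occur as local components of automorphic representations in the discrete spectrum, hence are unitary. The second assertion is the genuinely new input, and here one uses the explicit characterization of the complementary Arthur representations obtained in this paper: the unitarity of a complementary series $\pi_s$ built from a representation $\pi$ of Arthur type is reduced to a deformation argument along the explicit segment of exponents $s$ for which, by that characterization, $\pi_s$ stays irreducible and carries a nondegenerate invariant Hermitian form. On such a segment the signature of the form cannot change, so unitarity at the Arthur point $s=0$ --- or at an endpoint which is itself of Arthur type --- propagates to the whole segment. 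The technical core is the determination of the reducibility points of the relevant parabolically induced representations, available for classical groups through work of M\oe glin and Tadi\'c, and the matching of these with the combinatorial structure of $\Pi_A(G)$.

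For the reverse inclusion I would begin from the Langlands classification, writing an irreducible unitary representation as the Langlands quotient of a standard module induced from $\delta_1|\cdot|^{e_1}\times\cdots\times\delta_r|\cdot|^{e_r}\rtimes\tau$ with $\tau$ tempered and $e_1\ge\cdots\ge e_r>0$. Hermitianness forces the multiset of exponents to be stable under $e\mapsto-e$, and unitarity, by a Tadi\'c-style rank-one reduction, forces a rigid pattern on how those exponents can occur. One then peels off unitary parabolic induction and complementary series one block at a time, reducing to an ``end of complementary series'' --- an irreducible unitary representation not unitarily induced or deformed from anything smaller --- and shows that each such end is of Arthur type. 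This final step is where a complete classification of the unitary dual of $p$-adic classical groups, in the spirit of Tadi\'c's program, must be confronted and matched against M\oe glin's explicit construction of local Arthur packets.

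The main obstacle is this reverse inclusion, which is essentially as hard as completing the unitary dual classification and is open even granting Arthur's conjectures. The crux is ruling out isolated unitary representations lying outside the Arthur closure: this needs control of the signature of the invariant Hermitian form along the walls of the parameter space, not merely at generic points, together with a provably exhaustive list of the ``elementary'' unitary representations and a proof that every one of them is of Arthur type or a complementary series of such. The forward inclusion --- and in particular the unitarity of the complementary Arthur representations characterized in this paper --- is a necessary first step, and the one to which the present paper directly contributes.
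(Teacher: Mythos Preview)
The statement you are attempting to prove is not a theorem in the paper but a \emph{conjecture} (Conjecture~\ref{conj A closure}); the paper offers no proof of it and does not claim one. Your own proposal concedes this when you say the reverse inclusion ``is essentially as hard as completing the unitary dual classification and is open even granting Arthur's conjectures.'' That is the correct assessment, and it means what you have written is an outline of a strategy, not a proof.

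Two further points. First, the forward inclusion $\Pi_{\ol{A}}(G)\subseteq\Pi_u(G)$ is essentially immediate from the definition of the closure: $\Pi_{\ol{A}}(G)$ is \emph{constructed} from $\Pi_A(G)$ by complementary series and unitary parabolic induction, operations which by design output unitary representations when the input is unitary (the paper states this just before Theorem~1.5). It does not require the characterization of complementary Arthur representations; you have conflated the forward inclusion with the paper's main theorem. What the paper actually proves (Theorem~\ref{thm A+,u}) is a different statement: it identifies exactly which members of the a priori larger set $\Pi_{A+}(G_n)$ are unitary, and deduces $\Pi_{\ol{A}}(G_n)=\Pi_{\ol{A+,u}}(G_n)$. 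This reduces the conjecture to $\Pi_{\ol{A+,u}}(G_n)=\Pi_u(G_n)$ but does not settle it. Second, your sketch of the reverse inclusion --- peel off blocks until you reach an ``end of complementary series'' and then show it is of Arthur type --- is precisely the shape of the open problem, and the final step you describe (matching isolated unitary representations against M{\oe}glin's packets) has no known argument in general.
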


For quasi-split classical groups $G$, Arthur introduced a certain enlargement of the set of local Arthur parameters denoted by $\Psi^+_{1/2}(G) \supset \Psi(G)$ (see \eqref{lap}). For $\psi\in\Psi^+_{1/2}(G),$ Arthur also defined the corresponding local Arthur packet $\Pi_\psi$. We let $\Pi_{A+}(G):= \cup_{\psi \in \Psi^+_{1/2}(G)} \Pi_\psi$. Arthur also introduced a smaller subset $\Psi^+_{\mathrm{unit}}(G)$ (\cite[\S 8.3]{Art13}, see \eqref{eq def Psi^+_unit}). Without assuming the Ramanujan conjecture, one can only conclude that the local component of a discrete automorphic representation lies in local Arthur packets corresponding to $\Psi^+_{\mathrm{unit}}(G).$ In \cite[Conjecture 8.3.1]{Art13}, Arthur conjectured that the local Arthur packets corresponding to local Arthur parameters in $\Psi^+_{\mathrm{unit}}(G)$ all consist of unitary representations. However, this assertion may not be true in general due to the counterexamples found in \cite[Example 5.1]{HJLLZ24}. Therefore, we introduce the following set:
\[ 
\Pi_{A+,\, u}(G):=\Pi_{A+}(G) \cap \Pi_u(G).
\]
In order to understand Conjecture \ref{conj A closure}, towards the unitary dual problem, it is desirable to characterize explicitly those unitary representations in $\Pi_{A+,\, u}(G)$, which we call the {\it complementary Arthur representations} of $G$. 

In this paper, we give an explicit characterization of $\Pi_{A+,\, u}(G_n)$, for $G_n=\Sp_{2n}(F)$ or $\SO_{2n+1}(F)$, split groups of rank $n$. More precisely, let $\psi\in \Psi^{+}_{\mathrm{unit}}(G_n)$ with decomposition
\[ \psi= \bigoplus_{i \in I_{nu}} (\rho_i \lvert \cdot \rvert^{x_i} \otimes S_{a_i} \otimes S_{b_i}+\rho_i^{\vee} \lvert \cdot \rvert^{-x_i} \otimes S_{a_i} \otimes S_{b_i}) + \psi_{u}, \]
where $0< x_i< \half{1} $ for any $i \in I_{nu}$ and  $\psi_{u} \in \Psi(G_m)$ for some $m \leq n$ (for notation, see \eqref{eq general lap}). Write
\begin{equation}\label{u rho a b}
    u_{\rho_i}(a_i,b_i):= \begin{pmatrix}
\frac{a_i-b_i}{2} & \cdots & \frac{a_i+b_i}{2}-1 \\
\vdots & \ddots & \vdots \\
\frac{-a_i-b_i}{2}+1 & \cdots & \frac{b_i-a_i}{2}
\end{pmatrix}_{\rho_i}, 
\end{equation}
the (unitary) generalized Speh representation (for definition, see \eqref{shifted Speh rep}). 
Then by M{\oe}glin's result (see Theorem \ref{thm red from nu to gp}), every representation $\pi$ in $\Pi_{\psi}$ is of the form (see \S \ref{sub-gl} for the notation of the parabolic induction)
\begin{equation}\label{eq comp rep decomp} 
\pi=\bigtimes_{i \in I_{nu}} u_{\rho_i}(a_i, b_i) \lvert \cdot \rvert^{x_i} \rtimes \pi_u
\end{equation}
for some $\pi_u \in \Pi_{\psi_u}$. 

In the October 2024 version of our arXiv paper \cite{HJLLZ24}, we made the following conjecture on the characterization of $\Pi_{A+,u}(G_n)$. 

    \begin{conj}[Complementary Arthur Representations, {\cite[Conjecture 5.9]{HJLLZ24}}]\label{conj A+,u}
For any $\psi\in \Psi^{+}_{\mathrm{unit}}(G_n)$ with $G_n=\Sp_{2n}(F)$ or $\SO_{2n+1}(F)$,
a representation $\pi\in\Pi_\psi$ as in \eqref{eq comp rep decomp} is unitary if and only if for any $i \in I_{nu}$ such that $u_{\rho_i}(a_i,b_i)\rtimes \pi_u$ is reducible, the set
\[ \{j \in I_{nu}\ | \ \rho_j \cong \rho_i, a_j=a_i,\ b_j=b_i\}\]
has even cardinality.
\end{conj}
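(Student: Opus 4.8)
The plan is to realize the invariant Hermitian form on $\pi$ by a normalized intertwining operator and to compute its signature after a harmless deformation of the exponents. First I record the inputs. By M{\oe}glin's description (Theorem \ref{thm red from nu to gp}) together with the reducibility points of Speh-induced representations, for $\pi_u\in\Pi_{\psi_u}$ the representation $u_{\rho}(a,b)\lvert\cdot\rvert^{x}\rtimes\pi_u$ is irreducible for every $x\in(0,\half{1})$, the only reducibility point in $[0,\half{1})$ being $x=0$; reducibility of $u_{\rho}(a,b)\rtimes\pi_u$ moreover forces $\rho\cong\rho^{\vee}$, since only a self-dual cuspidal occurs in a self-dual Jordan block. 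Since equal Speh blocks do not link when their exponents differ by less than $1$, any product $\bigtimes_i u_{\rho_i}(a_i,b_i)\lvert\cdot\rvert^{x_i}\rtimes\pi_u$ with all $x_i\in(0,\half{1})$ is irreducible, so $\pi$ is the induced representation of \eqref{eq comp rep decomp}. Because $\psi\in\Psi^{+}_{\mathrm{unit}}$ it is conjugate self-dual, which means the multiset $\{(\rho_i,x_i,a_i,b_i)\}_{i\in I_{nu}}$ is stable under $\rho\mapsto\rho^{\vee}$; consequently $\pi$ is Hermitian, its invariant form is given by the normalized self-intertwining operator $R_{\mathrm{long}}$ attached to the Weyl element sending every $x_i\mapsto -x_i$, and $\pi$ is unitary if and only if $R_{\mathrm{long}}$ is positive definite. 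The signature of $R_{\mathrm{long}}$ is locally constant on $(0,\half{1})^{I_{nu}}$ since $\pi$ remains irreducible there.

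I would then compute this signature by deforming $\pi$, through irreducibles, into a convenient shape. Using that $\tau\rtimes\sigma$ and $\tau^{\vee}\rtimes\sigma$ are induced from associate parabolic subgroups, I pair up the indices: a non-self-dual $\rho_i$ is matched with its $\rho^{\vee}$-partner carrying the same $(x,a,b)$ (which exists by the stability above), and the indices in each self-dual group are paired among themselves after deforming their exponents to a common generic value in $(0,\half{1})$, never passing through $0$; applying the associate-parabolic isomorphism, each pair becomes a $\GL$-complementary series representation $u_{\rho}(a,b)\lvert\cdot\rvert^{x_0}\times u_{\rho}(a,b)\lvert\cdot\rvert^{-x_0}$, which is irreducible and unitary. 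Only self-dual groups of odd cardinality leave a single unpaired block $u_{\rho}(a,b)\lvert\cdot\rvert^{x}$. After all pairings $\pi$ has been deformed through irreducible Hermitian representations to $T_1\times\cdots\times T_r\times(\text{odd leftover blocks})\rtimes\pi_u$, and $R_{\mathrm{long}}$ factors through one flip operator per block. The flip of a complementary series factor $T_l$ is the positive scalar realizing its positive definite form; the flip of a leftover $u_{\rho}(a,b)\lvert\cdot\rvert^{x}\rtimes\sigma$ with $u_{\rho}(a,b)\rtimes\sigma$ irreducible is likewise a positive scalar, by continuity from $x=0$ where the factor is unitarily induced from a unitary representation; and the flip of a leftover $u_{\rho}(a,b)\lvert\cdot\rvert^{x}\rtimes\sigma$ with $u_{\rho}(a,b)\rtimes\sigma$ reducible, continued to $x\to 0$, becomes the self-intertwining operator on $u_{\rho}(a,b)\rtimes\sigma$, which acts by $+1$ and $-1$ on its two constituents since the relevant $R$-group is $\Z/2\Z$, hence contributes an indefinite block. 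Therefore $R_{\mathrm{long}}$ is positive definite exactly when there is no leftover of the last kind, i.e., when every $(\rho,a,b)$ with $u_{\rho}(a,b)\rtimes\pi_u$ reducible occurs an even number of times; this yields both implications of the theorem.

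The main obstacle is to make rigorous the factorization of $R_{\mathrm{long}}$ and the evaluation of its flip factors at the degenerate points $x_i=0$: one needs the normalized intertwining operators to be holomorphic and non-vanishing there, to compose correctly up to positive scalars, and to take the stated values (positive scalars for the complementary series and irreducible-type leftover factors, $\pm 1$ on the two constituents for a reducible-type leftover factor, via the $R$-group). This is precisely the content of the conjectures on the holomorphy and the non-vanishing of the normalized intertwining operators, which therefore has to be established in the relevant cases; I would do so by decomposing the operators into rank-one ones and invoking the rank-one reducibility and intertwining-operator behaviour for classical groups available from the endoscopic classification and M{\oe}glin--Waldspurger theory, together with the explicit structure of the Speh blocks.
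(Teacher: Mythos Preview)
Your high-level strategy—deform the exponents, pair blocks into $\GL$-complementary series, and detect non-unitarity on the leftover blocks via the sign of a self-intertwining operator—is the same as the paper's. The gaps are in the two places where the paper does real work.

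First, for a single leftover block you assert that when $u_{\rho}(a,b)\rtimes\sigma$ is reducible the self-intertwining operator ``acts by $+1$ and $-1$ on its two constituents since the relevant $R$-group is $\Z/2\Z$.'' This is exactly the paper's Theorem~\ref{thm base case reducible components} (equivalently Theorem~\ref{thm reducibility unitary induction}), and it is not free. Outside the tempered case there is no Knapp--Stein $R$-group theorem telling you that reducibility forces the normalized operator to be non-scalar; what you have is Arthur's \eqref{eq A-LIR}, which says the operator acts on each constituent $\pi$ by $\langle s_u,\pi\rangle_{\psi}\in\{\pm1\}$, but you must still show that \emph{both} signs occur. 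The paper proves this by realizing the constituents as $\pi(\EE_{\ee})$ for $\ee$ ranging over an \emph{interval} of extended segments (Proposition~\ref{prop NV segment}(1)) and observing that adjacent $\ee$'s flip the sign (Remark~\ref{rmk character}(2)); this is the content of \S\ref{sec Z-extended segments}. Also, ``its two constituents'' is wrong: the induction can have three or more (see the computation in the proof of Theorem~\ref{thm Ramanujan}), and the paper's statement is the finer $|m^{+}-m^{-}|\le 1$.

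Second, when several leftover blocks remain, your factorization of $R_{\mathrm{long}}$ does not reduce to the single-block case. After you pair up what you can, you are looking at $u_{\rho_1}(a_1,b_1)\lvert\cdot\rvert^{x_1}\times\cdots\times u_{\rho_s}(a_s,b_s)\lvert\cdot\rvert^{x_s}\rtimes\pi_u$ with the $(\rho_i,a_i,b_i)$ distinct and each $u_{\rho_i}(a_i,b_i)\rtimes\pi_u$ reducible. Sending one $x_j\to 0$ lands you on $u_{\rho_j}(a_j,b_j)\rtimes\sigma$ with $\sigma=\bigtimes_{i\neq j}u_{\rho_i}(a_i,b_i)\lvert\cdot\rvert^{x_i}\rtimes\pi_u$, which is \emph{not} of Arthur type, so neither your $R$-group claim nor the paper's Theorem~\ref{thm base case reducible components} applies to it. The paper avoids this by a different induction: using Theorem~\ref{thm reduction} (the combinatorial Proposition~\ref{prop NV segment}(2)) it shows that after sending one $x_s\to 0$ one can choose an irreducible subquotient $\pi_s\le u_{\rho_s}(a_s,b_s)\rtimes\pi(\EE')$, itself of Arthur type, such that $u_{\rho_1}(a_1,b_1)\rtimes\pi_s$ is still reducible, and then appeals to Lemma~\ref{lem parabolic reduction}(b). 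Your rank-one decomposition sketch does not supply this step.
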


The main result of this paper is to prove Conjecture \ref{conj A+,u}.

\begin{thm}[Theorem \ref{thm main}]\label{thm A+,u}
The characterization of complementary Arthur representations as in Conjecture \ref{conj A+,u} holds 
for $G_n=\Sp_{2n}(F)$ or $\SO_{2n+1}(F)$. 
\end{thm}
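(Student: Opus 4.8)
The plan is to reduce, via M{\oe}glin's structure result (Theorem~\ref{thm red from nu to gp}), to the analysis of one ``cluster'' of factors at a time, and then to compute the signature of the invariant Hermitian form on the resulting Langlands quotient. Set $\delta_i=\rho_i\otimes S_{a_i}\otimes S_{b_i}$ and partition $I_{nu}$ according to the value of the triple $(\rho_i,a_i,b_i)$. For a cluster $C$ on which $u_{\rho}(a,b)\rtimes\pi_u$ is irreducible, this representation lies in the local Arthur packet of $\psi_u\oplus\delta\oplus\delta^{\vee}$, hence is unitary; and since $\psi\in\Psi^{+}_{\mathrm{unit}}$ forces $x_i\in(0,\tfrac12)$, a range in which $u_{\rho}(a,b)\lvert\cdot\rvert^{t}\rtimes(-)$ stays irreducible (the next reducibility point after $t=0$ being $\ge\tfrac12$, and there being none in $(0,\tfrac12)$ when $t=0$ is not itself one), the factors attached to $C$ build up a complementary series off a unitary representation. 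Using the irreducibility criteria for products of shifted Speh representations so that distinct clusters decouple, $\pi$ is unitary if and only if $\pi_C:=\bigtimes_{i\in C}u_{\rho}(a,b)\lvert\cdot\rvert^{x_i}\rtimes\pi_u$ is unitary for every cluster $C$ with $u_{\rho}(a,b)\rtimes\pi_u$ reducible; it thus suffices to show that such a $\pi_C$ is unitary exactly when $\#C$ is even.

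For the ``only if'' direction, realize $\pi_C$ as the Langlands quotient of a standard module obtained by ordering exponents, so that its invariant Hermitian form is given by $(v,w)\mapsto\langle v,Aw\rangle$ for the long normalized intertwining operator $A$, and factor $A$ into rank-one operators. The operators permuting $\GL$-factors among themselves are positive definite for $x_i\in(0,\tfrac12)$ by Tadi\'c's description of the unitary dual of $\GL$, so the signature of the form is carried by the rank-one operators that move a single factor $u_{\rho}(a,b)\lvert\cdot\rvert^{x_i}$ past $\pi_u$. Each such operator is governed, via the rank-one reduction, by the reducibility of $u_{\rho}(a,b)\rtimes\pi_u$ at $x_i=0$: since $u_{\rho}(a,b)\rtimes\pi_u$ is reducible, the normalized operator is not scalar there — it is $+1$ on one tempered constituent and $-1$ on the other — and hence, by continuity of the signature along the irreducibility locus $(0,\tfrac12)$, the associated form is indefinite for every $x_i\in(0,\tfrac12)$. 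Therefore, when $\#C$ is odd, $A$ carries an odd number of such sign defects, none of which can be cancelled, and $\pi_C$ is not unitary.

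For the ``if'' direction, suppose $\#C$ is even and pair the factors of $C$. For a pair with exponents $x,y\in(0,\tfrac12)$, use the MVW isomorphism $\tau\rtimes\sigma\cong\tau^{\vee}\rtimes\sigma$ to replace $y$ by $-y$ and then deform $y\to-x$ inside the irreducibility locus, so that the pair's contribution is deformed to the block $\bigl(u_{\rho}(a,b)\lvert\cdot\rvert^{x}\times u_{\rho^{\vee}}(a,b)\lvert\cdot\rvert^{-x}\bigr)\rtimes\pi_u$; its $\GL$-part is a unitary Tadi\'c building block, and its induction to $G_n$ is unitary because no reducibility is crossed in $(0,\tfrac12)$ and the $x\to0$ limit has Arthur parameter $\psi_u\oplus 2(\delta\oplus\delta^{\vee})$. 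Thus each pair contributes a positive-definite block, the sign defects of the previous paragraph cancel two at a time, and $\pi_C$ is unitary. Assembling the clusters then yields Theorem~\ref{thm A+,u}.

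The step I expect to be the main obstacle is the ``if'' direction: checking that no reducibility hyperplane is crossed along the deformation $(x,y)\rightsquigarrow(x,-x)$ inside $(0,\tfrac12)^{2}$, that the resulting paired block is genuinely positive definite, and — when $\rho$ is not self-dual — tracking $\rho$ against $\rho^{\vee}$ correctly through the MVW step and through the product with the other factors. This is exactly where the fine combinatorics of M{\oe}glin's construction and the known classification of complementary series for classical $p$-adic groups enter, and where the hypothesis $\psi\in\Psi^{+}_{\mathrm{unit}}$, i.e. $x_i<\tfrac12$, is indispensable; the ``only if'' direction is, by contrast, the standard Knapp--Stein signature dichotomy once everything is reduced to a single crossing operator.
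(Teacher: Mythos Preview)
Your ``if'' direction is essentially the paper's: pair off factors in each even cluster, use $\tau\rtimes\sigma\cong\tau^{\vee}\rtimes\sigma$ to produce the unitary $\GL$-block $u_{\rho}(a,b)\lvert\cdot\rvert^{x}\times u_{\rho}(a,b)\lvert\cdot\rvert^{-x}$, and apply unitary parabolic reduction; the irreducibility along the deformation is exactly Theorem~\ref{thm red from nu to gp}, so your worry there is unfounded. But your assessment of the ``only if'' direction is inverted --- it is the hard half, and your outline has two genuine gaps.

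First, the clusters do \emph{not} decouple in the way you claim. The representations $\pi_C$ for different $C$ all share the same $\pi_u$, so to isolate one bad cluster $C_1$ you must absorb the others; concretely, deforming the exponents of another reducible cluster $C_2$ to $0$ lands on a \emph{reducible} induction, and you must pass to a subquotient $\pi_u'$. You then need $u_{\rho_1}(a_1,b_1)\rtimes\pi_u'$ to remain reducible --- and this can fail for some subquotients. That at least one subquotient works is Theorem~\ref{thm reduction} in the paper, proved via the interval combinatorics of extended multi-segments (Proposition~\ref{prop NV segment}(2)); your signature count across several rank-one operators does not bypass this, because those operators act on different induced spaces as factors are peeled off, and there is no clean ``parity of sign defects'' without first controlling how reducibility propagates. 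Second, even the single-factor base case is not ``standard Knapp--Stein'': reducibility of $u_{\rho}(a,b)\rtimes\pi_u$ does not by itself force $N(0)$ to be non-scalar. The local intertwining relation only says $N(0)$ acts on each summand by $\langle s_u,\,\cdot\,\rangle_{\psi}$; one must still show that these values actually \emph{differ} across the summands. This is Theorem~\ref{thm base case reducible components} (equivalently Theorem~\ref{thm reducibility unitary induction}), and its proof --- showing that the nonzero summands form an interval on which the character alternates --- is the content of \S\ref{sec Z-extended segments}--\ref{sec extended multi-segments}.
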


The proof of Theorem \ref{thm A+,u} consists of two parts: 
\begin{enumerate}
    \item [(i)]The base case that $|I_{nu}|=1$. Namely, $\pi= u_{\rho}(a,b)\lvert\cdot\rvert^s \rtimes \pi_A$, where $\pi_A$ is of Arthur type and $0 < s<\half{1}$.
    \item [(ii)] The reduction to the base case.
\end{enumerate}
To deal with the base case, we use Arthur's local intertwining relations {\cite[Theorem 2.4.1]{Art13}} and a general criterion for proving the non-unitarity (\cite[(RP)(i)]{MT11}, see Lemma \ref{lemma non-unitary}). With these results, Part (i) becomes 
\begin{enumerate}
    \item [(i$'$)] If $u_{\rho}(a,b)\lvert\cdot\rvert^s \rtimes \pi_A$ is reducible at $s=0$, then there exist two direct summands of $u_{\rho}(a,b) \rtimes \pi_A$ that have different characters in Arthur's component group.
\end{enumerate}
Our main novelty to achieve Part (i$'$) and (ii) is to introduce the definition of ``interval" and ``adjacent" on extended multi-segments, combinatorial objects used to construct local Arthur packets, and to study their combinatorial properties. This is done in \S \ref{sec Z-extended segments}.

Conjecture \ref{conj A+,u} for quasi-split even orthogonal groups is already stated in \cite[Conjecture 5.9]{HJLLZ24}. The proof of Theorem \ref{thm A+,u} in this paper will be extended to quasi-split even orthogonal groups in \cite{HLL25}. 
We remark that H. Atobe and A. M{\'i}nguez also proved Theorem \ref{thm A+,u} independently using similar ideas  (\cite{AM25}).

Theorem \ref{thm A+,u} gives one constraint on the local components of $L^2$-automorphic representations of $\Sp_{2n}$ and split $\SO_{2n+1}$ at $p$-adic places without knowing the generalized Ramanujan conjecture (\cite{Sar05}).

\begin{cor}\label{WRC}
    Let $\RG_n=\Sp_{2n}$ or $\SO_{2n+1}$ be split over any number field $k$ and $\sigma$ be any automorphic representation occurring in the discrete spectrum of $\RG_n$. 
    At each local place $\nu<\infty$ of $k$, the $\nu$-local component 
    $\sigma_\nu$ satisfies the constraint given in Theorem \ref{thm A+,u}.
\end{cor}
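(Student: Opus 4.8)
The plan is to derive Corollary~\ref{WRC} as a direct consequence of Arthur's endoscopic classification of the discrete spectrum together with Theorem~\ref{thm A+,u}. Since $\RG_n=\Sp_{2n}$ or $\SO_{2n+1}$ is split over $k$, it is in particular a quasi-split classical group to which \cite{Art13} applies, and by the main global classification theorem \cite[Theorem 1.5.2]{Art13} the automorphic representation $\sigma$ occurring in $L^2_{\mathrm{disc}}(\RG_n)$ lies in the global Arthur packet attached to some global Arthur parameter $\psi$, whose building blocks are (conjugate) self-dual cuspidal automorphic representations of general linear groups over $k$.

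Next I would localize at a finite place $\nu$, writing $G_n=\RG_n(k_\nu)$. Arthur's construction gives $\sigma_\nu\in\Pi_{\psi_\nu}$, where $\psi_\nu$ is the localization of $\psi$ at $\nu$; the essential point is that $\psi_\nu$ lies in $\Psi^+_{\mathrm{unit}}(G_n)$ (see \eqref{eq def Psi^+_unit}) but not necessarily in $\Psi(G_n)$. Indeed, each local factor of $\psi_\nu$ arising from a cuspidal automorphic representation $\tau_i$ of $\GL_{N_i}$ is the Langlands datum of the generic unitary representation $\tau_{i,\nu}$, and by the Jacquet--Shalika bounds toward the Ramanujan conjecture (which is all that is invoked here; cf.\ \cite{Sar05}) its exponents lie in $(-\half{1},\half{1})$. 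This is exactly the condition defining $\Psi^+_{\mathrm{unit}}(G_n)$, so $\psi_\nu\in\Psi^+_{\mathrm{unit}}(G_n)\subseteq\Psi^+_{1/2}(G_n)$.

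Since $\sigma$ occurs in the discrete spectrum it is a unitary automorphic representation, hence each local component $\sigma_\nu$ is unitary. Combining this with the previous step, $\sigma_\nu\in\Pi_{\psi_\nu}\cap\Pi_u(G_n)\subseteq\Pi_{A+}(G_n)\cap\Pi_u(G_n)=\Pi_{A+,\,u}(G_n)$ with $\psi_\nu\in\Psi^+_{\mathrm{unit}}(G_n)$. By M{\oe}glin's result (Theorem~\ref{thm red from nu to gp}) the representation $\sigma_\nu$ is then of the form \eqref{eq comp rep decomp}, and Theorem~\ref{thm A+,u} (the content of Conjecture~\ref{conj A+,u}) applies verbatim to $\sigma_\nu$, yielding precisely the asserted even-cardinality constraint.

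The only delicate point, already emphasized in the introduction, is the one in the second paragraph: without the full Ramanujan conjecture one cannot conclude $\psi_\nu\in\Psi(G_n)$, only $\psi_\nu\in\Psi^+_{\mathrm{unit}}(G_n)$, and this is exactly the regime in which $\Pi_{\psi_\nu}$ may contain non-unitary members (\cite[Example 5.1]{HJLLZ24}); thus the substance of the corollary is genuinely the combination of Arthur's classification with the explicit characterization in Theorem~\ref{thm A+,u}. I do not anticipate any further obstacle, the remainder being a routine bookkeeping of localizations of global Arthur parameters.
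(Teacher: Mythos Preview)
Your proposal is correct and matches the paper's approach: the corollary is stated in the introduction without a separate proof, as it is treated as an immediate consequence of Arthur's global classification \cite[Theorem 1.5.2]{Art13} (placing $\sigma_\nu$ in $\Pi_{\psi_\nu}$ with $\psi_\nu\in\Psi^+_{\mathrm{unit}}(G_n)$), the unitarity of $\sigma_\nu$, and Theorem~\ref{thm A+,u}. Your write-up spells out exactly these steps, including the key observation that only $\psi_\nu\in\Psi^+_{\mathrm{unit}}(G_n)$ is available without Ramanujan, which is precisely the point the paper emphasizes in the surrounding discussion.
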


By the Langlands functorial transfer from $\RG_n$ to $\GL_N$, one may obtain similar local constraints 
as in Corollary \ref{WRC} on the local components of irreducible self-dual cuspidal automorphic representations $\Pi$ of $\GL_N$. 
In this case, we have 
\[
L(s,\Pi\times\Pi)=L(s,\Pi,\Sym^2)L(s,\Pi,\wedge^2).
\]
It follows that either $L(s,\Pi,\Sym^2)$ has a simple pole at $s=1$, in which case $\Pi$ is called {\sl of orthogonal type} or $L(s,\Pi,\wedge^2)$ has a simple pole at $s=1$, in which case $\Pi$ is called {\sl of symplectic type}. It is clear that if $\Pi$ is of symplectic type, then $N=2n$ must be even. 
Since $\Pi$ is cuspidal, the local component $\Pi_v$ of $\Pi$ at any finite local place $v$ of $k$ is generic and unitary. Hence, following \cite{Tad86}, it is of the form:
\begin{align}\label{eq Piv}
    \Pi_v\cong u_{\rho_1}(a_1,1)\lvert \cdot \rvert^{x_1} \times \cdots \times u_{\rho_s}(a_s,1) \lvert \cdot \rvert^{x_s} 
    \times \pi_{t}
    \times u_{\rho_s}(a_s,1) \lvert \cdot \rvert^{-x_s} \times \cdots \times u_{\rho_1}(a_1,1) \lvert \cdot \rvert^{-x_1},
\end{align}
where for each $i$, $\rho_i$ is an irreducible unitary supercuspidal representation of $\GL_{n_i}(k_v)$ and $ 0 < x_i <\half{1}$, and $\pi_t$ is a tempered representation. Then we have the following application of 
Theorem \ref{thm A+,u}.

\begin{thm}[{Theorem \ref{thm Ramanujan}}]\label{thm Ramanujan 1 intro}
    Suppose that $\Pi$ is an irreducible cuspidal automorphic representation of $\GL_N(\A_k)$ of orthogonal type (resp. symplectic type). Let $v<\infty$ and write the $v$-local component $\Pi_v$ of the form \eqref{eq Piv}.
For any $1 \leq i \leq s$, if $u_{\rho_i}(a_i,1)$ is of orthogonal type (resp. symplectic type) and $u_{\rho_i}(a_i,1)$ is not a factor of $\pi_t$, then the set 
\[\{ 1 \leq j \leq s\ | \ \rho_j \cong \rho_i \text{ and }a_j=a_i\}\]
        has even cardinality. 
\end{thm}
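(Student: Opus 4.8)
The plan is to derive Theorem~\ref{thm Ramanujan 1 intro} from Corollary~\ref{WRC} by descending $\Pi$ to the classical group side and then re-expressing the two hypotheses on $\Pi_v$ intrinsically in terms of the local Arthur packet data. First I would invoke the endoscopic classification to produce the descent: since $\Pi$ is cuspidal, self-dual of orthogonal (resp. symplectic) type with trivial central character, the formal object $\psi:=\Pi\boxtimes S_1$ is a simple generic elliptic global Arthur parameter for $\RG_n=\Sp_{2n}$ with $N=2n+1$ (resp. for $\RG_n=\SO_{2n+1}$ with $N=2n$), and by Arthur's multiplicity formula there is a generic, hence cuspidal, automorphic representation $\sigma$ of $\RG_n(\A_k)$ occurring in the discrete spectrum whose functorial transfer to $\GL_N$ is $\Pi$. (For orthogonal type with $N$ even one instead uses the even orthogonal analogue of Corollary~\ref{WRC} from \cite{HLL25}; the argument is otherwise identical.)

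At the fixed finite place $v$, the local component $\sigma_v$ lies in the local Arthur packet $\Pi_{\psi_v}$ of $\RG_n(k_v)$, where $\psi_v$ is the localization of $\psi$. Reading the Langlands parameter of $\Pi_v$ off from \eqref{eq Piv} — each factor $u_{\rho_i}(a_i,1)=\St(\rho_i,a_i)$ contributes $\rho_i\otimes S_{a_i}$ and $\pi_t$ contributes a tempered parameter $\phi_{\pi_t}$ — gives
\[ \psi_v=\bigoplus_{i=1}^{s}\big(\rho_i\lvert\cdot\rvert^{x_i}\otimes S_{a_i}\otimes S_1+\rho_i^{\vee}\lvert\cdot\rvert^{-x_i}\otimes S_{a_i}\otimes S_1\big)+\psi_{u}, \]
where $\psi_u=\phi_{\pi_t}\otimes S_1$ is a tempered Arthur parameter; a parity check shows $\phi_{\pi_t}$ is self-dual of the type dual to $\RG_n$, so $\psi_v$ factors through $\widehat{\RG_n}$, and since $0<x_i<\half{1}$ it lies in $\Psi^+_{\mathrm{unit}}(\RG_n)$. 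Moeglin's description (Theorem~\ref{thm red from nu to gp}) then writes $\sigma_v=\bigtimes_{i=1}^{s}u_{\rho_i}(a_i,1)\lvert\cdot\rvert^{x_i}\rtimes\pi_u$ with $\pi_u\in\Pi_{\psi_u}$ tempered, and $\pi_u$ is precisely the tempered representation of the relevant Levi factor of $\RG_n$ transferring to $\pi_t$; in particular the index set $I_{nu}$ of \eqref{eq comp rep decomp} is $\{1,\dots,s\}$. Next I would apply Corollary~\ref{WRC} to $\sigma$ at $v$: $\sigma_v$ is unitary and satisfies the constraint of Theorem~\ref{thm A+,u}, so for every $i$ with $u_{\rho_i}(a_i,1)\rtimes\pi_u$ reducible, the set $\{\,j : \rho_j\cong\rho_i,\ a_j=a_i\,\}$ has even cardinality. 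It then remains to match the reducibility hypothesis with the stated ones: by the standard reducibility criterion for $\delta\rtimes\tau$ with $\delta$ an essentially square-integrable representation of a general linear group and $\tau$ tempered on a classical group (Goldberg's $R$-group computation, equivalently a consequence of Arthur's local intertwining relations in \cite{Art13} together with the known Plancherel measures), $\St(\rho_i,a_i)\rtimes\pi_u$ is reducible if and only if $\St(\rho_i,a_i)$ is self-dual of the type dual to $\RG_n$ — orthogonal for $\Sp_{2n}$, symplectic for $\SO_{2n+1}$, i.e. exactly the condition that $u_{\rho_i}(a_i,1)$ be of orthogonal (resp. symplectic) type — and $\rho_i\otimes S_{a_i}$ does not occur in $\phi_{\pi_u}=\phi_{\pi_t}$, i.e. $u_{\rho_i}(a_i,1)$ is not a factor of $\pi_t$. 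Substituting this equivalence into the conclusion of Corollary~\ref{WRC} yields the theorem.

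The substantive inputs are both external to the combinatorics developed here: the existence of the discrete automorphic descent $\sigma$ with transfer $\Pi$ (the endoscopic classification, plus its even orthogonal counterpart when $N$ is even and $\Pi$ is of orthogonal type), and the reducibility dictionary identifying when $\delta\rtimes\tau$ splits. The hard part is not conceptual but a matter of care in the bookkeeping around parity and shift parameters: one must check that the complementary blocks $\rho_i\lvert\cdot\rvert^{x_i}\otimes S_{a_i}+\rho_i^{\vee}\lvert\cdot\rvert^{-x_i}\otimes S_{a_i}$ together carry a form of the correct type so that $\phi_{\pi_t}$, hence $\pi_t$, really is a transfer from a Levi of $\RG_n$, and that the exponents $x_i\in(0,\half{1})$ place $\psi_v$ inside $\Psi^+_{\mathrm{unit}}(\RG_n)$ rather than merely in $\Psi^+_{1/2}(\RG_n)$, so that Corollary~\ref{WRC} applies verbatim. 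Beyond this, no new ideas are needed.
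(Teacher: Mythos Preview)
Your argument is essentially correct for the symplectic case (this is exactly what the paper does) and for the orthogonal case with $N$ odd, but the route differs from the paper's in an instructive way. You descend via the tempered parameter $\psi=\Pi\boxtimes S_1$, which forces you to land on $\Sp_{2n}$ when $N=2n+1$ and on an even orthogonal group when $N=2n$; in the latter case you must invoke the even-orthogonal analogue of Theorem~\ref{thm A+,u} from \cite{HLL25}, which is not proved in this paper. The paper instead uses the non-tempered parameter $\psi=\Pi\boxtimes S_2$ in the orthogonal case, which is symplectic of dimension $2N$ regardless of the parity of $N$ and hence always descends to $\SO_{2N+1}$. The paper then verifies reducibility of $u_{\rho_i}(a_i,2)\rtimes\pi(\EE)$ by an explicit extended multi-segment computation (Theorem~\ref{thm Atobe unitary induction}), chooses the specific $\pi(\EE)$ with trivial character, and globalizes using that the global component group of $\Pi\boxtimes S_2$ is trivial. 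Your approach buys a cleaner tempered picture and the off-the-shelf Goldberg/Arthur $R$-group reducibility criterion in place of any extended multi-segment bookkeeping; the paper's $\boxtimes\, S_2$ trick buys self-containment within the symplectic/odd-orthogonal results already established here, at the cost of a small explicit packet computation.
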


\begin{remark}
Applying Theorems \ref{thm Ramanujan 1 intro} to the cases of $N=2, 3$, we obtain the following interesting constraints. 
    \begin{enumerate}
        \item [(a)] If an irreducible cuspidal automorphic representation $\Pi$ of $\GL_2(\A_k)$ is of orthogonal type, then for any local place $v < \infty$, the $v$-local component $\Pi_v$ is tempered.
        \item [(b)] Suppose that $\Pi$ is an irreducible self-dual cuspidal automorphic representation of  $\GL_3(\A_k)$ which at any finite local place $v$ of $k$, 
        the $v$-local component takes the form:  
        \[\Pi_v = \chi \lvert \cdot \rvert^x \times 1 \times \chi \lvert \cdot \rvert^{-x}\]
        for some $0 \leq x<\half{1}$ and $\chi^2=1$, where $1$ is the trivial representation of $\GL_1(k_v)$.
If $\chi \neq 1$, then $x=0$, i.e., $\Pi_v$ is tempered. In particular, for any $v < \infty$, if $\Pi_v$ is ramified, then $\Pi_v$ is tempered.
    \end{enumerate}
We expect that the above statements extend to all Archimedean local places $v$. 
\end{remark}

The following is another application and is, in fact, our original motivation to prove Theorem \ref{thm A+,u}. Because $\Pi_{A+,u}(G)$ consists of unitary representations, it is natural to test whether $\Pi_{A+,u}(G) \subset \Pi_{\overline{A}}^{\lim}(G)$.
 For the groups $G_n = \Sp_{2n}(F)$ and $\SO_{2n+1}(F)$, Theorem \ref{thm A+,u} in fact yields a stronger inclusion:

\begin{thm}[{\cite[Theorem 5.12]{HJLLZ24}}]
For $G_n=\Sp_{2n}(F)$ or $\SO_{2n+1}(F)$, we have
    $$\Pi_{A+,u}(G_n)\subset \Pi_{\ol{A}}(G_n) \subset \Pi_{\overline{A}}^{\lim}(G).$$
\end{thm}


Following is the structure of this paper. We recall some necessary notations and preliminary results in \S\ref{sec-pre}. In \S\ref{sec-LIR}, we recall a general criterion for proving the non-unitarity and Arthur's local intertwining relation, and then state the base case of the main result. The main technical part is \S\ref{sec Z-extended segments}, where we develop certain general combinatorial tools on $\Z$-extended multi-segments that are crucial for our proof. In \S\ref{sec extended multi-segments}, we recall some general results on extended multi-segments and Arthur packets. Our main result is proved in \S\ref{sec-proof}. Finally, \S\ref{sec-Piv} is devoted to the proof of Theorems \ref{thm Ramanujan 1 intro}.

\subsection*{Acknowledgements} 
The authors would like to thank Freydoon Shahidi for the interest and constant support. The authors would like to thank James Arthur and Bin Xu for helpful communications on \cite[Example 5.1]{HJLLZ24}. The authors also thank Alexander Stadler for helpful discussions. Finally, the authors thank the referee for the comments and suggestions.
Theorem \ref{thm A+,u} was announced during the Conference on Algebraic Representation Theory (CART) in December 2024, South Korea. The fourth author would like to thank 
the organizers for the invitation and the hospitality.

\section{Notation and Preliminaries} \label{sec-pre}

Let $F$ be a non-Archimedean local field of characteristic zero. Let $n$ be a positive integer. We work with the general linear group $\GL_n(F)$ and classical groups $G_n = \RG_n(F)$, where $\RG_n$ denotes either the symplectic group $\Sp_{2n}$ or the split special orthogonal group $\SO_{2n+1}$. 
Denote by $\Pi(G)$ the admissible dual of $G=\RG(F)$, the set of equivalence classes of irreducible admissible representations of $G$.

\subsection{Parabolic induction and generalized Speh representations}{\label{sub-gl}}

In this subsection, we give the notation for parabolic induction for $\GL_n(F)$ and $G_n= \Sp_{2n}(F)$ or $\SO_{2n+1}(F)$, and define the generalized Speh representations.

We start with $\GL_n(F)$. Fix a Borel subgroup of $\GL_n(F)$. For a standard parabolic subgroup $P$ with Levi factor $M \cong \prod_{i=1}^r \GL_{n_i}(F)$ (where $\sum n_i = n$) and smooth representations $\tau_i$  of $\GL_{n_i}(F)$, we denote the normalized parabolic induction by
\[
\tau_1 \times \cdots \times \tau_r := \Ind_P^{\GL_n(F)} (\tau_1 \boxtimes \cdots \boxtimes \tau_r).
\]

Let $\rho$ be an irreducible unitary supercuspidal representation of $\GL_d(F)$. For $x,y \in \R$ with $x - y \in \Z_{\geq 0}$, we define:
\begin{itemize}
\item The \emph{segment} $[x,y]_\rho := \{\rho\lvert \cdot \rvert^x, \rho\lvert \cdot \rvert^{x-1}, \ldots, \rho\lvert \cdot \rvert^y\}$, where $\lvert \cdot \rvert$ is the composition of $F^\times$'s normalized absolute value with the determinant character;
\item The \emph{Steinberg representation} $\Delta_\rho[x,y]$, which is the unique irreducible subrepresentation of $\rho\lvert \cdot \rvert^x \times \cdots \times \rho\lvert \cdot \rvert^y$.
\end{itemize}
For completeness, we declare $\Delta_\rho[x,x+1]$ (with $y = x+1$) to be the trivial representation of $\GL_0(F)$. 

The Langlands classification provides a complete description of the admissible dual $\Pi(\GL_n(F))$. Each irreducible admissible representation $\tau$ of $\GL_n(F)$ can be uniquely expressed as the irreducible subrepresentation of a standard parabolic induction:
\[
\tau \hookrightarrow \Delta_{\rho_1}[x_1,y_1] \times \cdots \times \Delta_{\rho_r}[x_r,y_r],
\]
where:
\begin{itemize}
    \item each $\rho_i$ is an irreducible unitary supercuspidal representation of $\GL_{n_i}(F)$,
    \item each $[x_i,y_i]_{\rho_i}$ is a segment with $x_i + y_i \leq x_{i+1} + y_{i+1}$ for all $i$.
\end{itemize}
We denote this representation by
\[
\tau = L(\Delta_{\rho_1}[x_1,y_1], \dots, \Delta_{\rho_r}[x_r,y_r]).
\]

Given integers $s,t \geq 1$ and a base point $x_{1,1} \in \R$, define the array:
\[
x_{i,j} = x_{1,1} - i + j \quad \text{for } 1 \leq i \leq s, 1 \leq j \leq t.
\]
The associated \emph{generalized Speh representation} is defined as:
\begin{equation}\label{shifted Speh rep}
   \begin{pmatrix}
x_{1,1} & \cdots & x_{1,t} \\
\vdots & \ddots & \vdots \\
x_{s,1} & \cdots & x_{s,t}
\end{pmatrix}_{\!\rho} 
:= L\left(\Delta_\rho[x_{1,1},x_{s,1}], \dots, \Delta_\rho[x_{1,t},x_{s,t}]\right). 
\end{equation}

Next, we give notation for the parabolic induction for the classical group $G_n$. Fix a Borel subgroup of $G_n$ and let $P$ be a standard parabolic subgroup with Levi decomposition
\[
M \cong \prod_{i=1}^r \GL_{n_i}(F) \times G_m.
\]
For smooth representations $\tau_i$ of $\GL_{n_i}(F)$ ($1 \leq i \leq r$) and $\sigma$ of $G_m$, we denote the normalized parabolic induction by
\[
\tau_1 \times \cdots \times \tau_r \rtimes \sigma := \Ind_P^{G_n}(\tau_1 \boxtimes \cdots \boxtimes \tau_r \boxtimes \sigma).
\]

\subsection{Arthur representations and reduction to good parity}\label{local A packets}

Recall that we have $\RG_n=\Sp_{2n}$ or split $\SO_{2n+1}$ and $G_n = \RG_n(F)$. Let $W_F$ be the Weil group and ${\RG}^\vee_n(\mathbb{C})$ be the Langlands dual group. 
The local Arthur packets $\Pi_{\psi}$ defined in \cite[Theorem 2.2.1]{Art13} are certain finite subsets of $\Pi(G_n)$, satisfying certain twisted endoscopic character identities, and are parameterized by local Arthur parameters
$$\psi: W_F \times \SL_2(\mathbb{C}) \times \SL_2(\mathbb{C}) \rightarrow {{\RG}^\vee_n(\mathbb{C})},$$
\begin{equation}\label{lap}
  \psi = \bigoplus_{i=1}^r \phi_i\lvert \cdot \rvert^{x_i} \otimes S_{a_i} \otimes S_{b_i},  
\end{equation}
satisfying the following conditions: 
\begin{enumerate}
    \item [(1)]$\phi_i(W_F)$ is bounded and consists of semi-simple elements, and $\dim(\phi_i)=d_i$;
    \item [(2)] $x_i \in \R$ and $|x_i|<\half{1}$;
    \item [(3)]the restrictions of $\psi$ to the two copies of $\SL_2(\mathbb{C})$ are algebraic, $S_k$ is the $k$-dimensional irreducible representation of $\SL_2(\mathbb{C})$, and 
    $$\sum_{i=1}^r d_ia_ib_i = N:= 
\begin{cases}
2n+1 & \text{ when } G_n=\Sp_{2n}(F),\\
2n & \text{ when } G_n=\SO_{2n+1}(F).
\end{cases}
$$ 
\end{enumerate}
We let $\Psi^{+}_{1/2}(G_n)$ be the set of local Arthur parameters of $G_n$ and $\Psi(G_n)$ be the subset of $\Psi^+_{1/2}(G_n)$ consisting of local Arthur parameters $\psi$ whose restriction to $W_F$ is bounded. In other words, $\psi \in \Psi(G_n)$ if and only if $x_i=0$ for $i=1,\dots, r$ in the decomposition \eqref{lap}. 

In this paper, a representation $\pi\in\Pi(G_n)$ is called an {\it Arthur representation} or {\it of Arthur type} if $\pi\in\Pi_\psi$ for some local Arthur parameter $\psi \in \Psi(G_n)$. Define
\begin{align*}
    \Pi_{A}(G_n)=\{\pi \in \Pi_{\psi} \ | \ \psi \in \Psi(G_n)\}\quad  {\rm ~and}\quad 
    \Pi_{A+}(G_n)=\{\pi \in \Pi_{\psi} \ | \ \psi \in \Psi^+_{1/2}(G_n)\}.
\end{align*}

Now we recall the reduction of the construction of local Arthur packets to the good parity case. By the Local Langlands Correspondence for $\GL_{d_i}(F)$, a bounded representation $\phi$ of $W_F$ can be identified with an irreducible unitary supercuspidal representation $\rho$ of $\GL_{d_i}(F)$ (\cite{Hen00, HT01, Sch13}). Consequently, we may write \eqref{lap} as
\begin{equation}\label{A-param decomp}
  \psi = \bigoplus_{i \in I } \rho_i\lvert \cdot \rvert^{x_i} \otimes S_{a_i} \otimes S_{b_i},  
\end{equation}
where $\rho_i$'s are unitary supercuspidal representations of $\GL_{d_i}(F)$. 

Under this decomposition, we say a summand $\rho_i\lvert \cdot \rvert \otimes S_{a_i} \otimes S_{b_i}$ of $\psi$ is of \emph{good parity} if $x_i=0$, $\rho_i$ is self-dual, and
\begin{itemize}
    \item $\rho_i$ is of orthogonal type if $G_n=\SO_{2n+1}$ and $a_i+b_i$ is odd, or if $G_n=\Sp_{2n}$ and $a_i+b_i$ is even; and
    \item $\rho_i$ is of symplectic type if $G_n=\SO_{2n+1}$ and $a_i+b_i$ is even, or if $G_n=\Sp_{2n}$ and $a_i+b_i$ is odd.
\end{itemize}
Here we say a self-dual supercuspidal representation of $\GL_d(F)$ is of orthogonal type (resp. symplectic type) if the image of its $L$-parameter $\phi: W_F \to \GL_d(\BC)$ preserves a non-degenerate symmetric (resp. skew-symmetric) bilinear form on $\BC^d$. Finally, we say $\psi$ is of \emph{good parity} if every summand in the decomposition \eqref{A-param decomp} is of good parity.

Let $\psi \in \Psi^{+}_{1/2}(G_n).$ Since $\psi$ factors through $\RG_n^\vee(\BC)$, we may rewrite the decomposition \eqref{A-param decomp} as
\begin{align}\label{eq general lap}
    \psi= & \bigoplus_{i \in I_{nu}} (\rho_i \lvert \cdot \rvert^{x_i} \otimes S_{a_i}\otimes S_{b_i} + \rho_i^{\vee} \lvert \cdot \rvert^{-x_i} \otimes S_{a_i}\otimes S_{b_i})\\
    &\oplus\bigoplus_{i \in I_{bp}} (\rho_i \otimes S_{a_i} \otimes S_{b_i} +\rho_i^{\vee} \otimes S_{a_i} \otimes S_{b_i}) \oplus\bigoplus_{i \in I_{gp}} \rho_i \otimes S_{a_i} \otimes S_{b_i},\nonumber
\end{align}
where 
\begin{enumerate}
    \item [$\oldbullet$] for any $i \in I_{nu}$, $x_i>0$;
    \item [$\oldbullet$] for any $i \in I_{bp}$, $x_i=0$ and $\rho_i\otimes S_{a_i} \otimes S_{b_i}$ is not of good parity;
    \item [$\oldbullet$] for any $i \in I_{gp}$, $x_i=0$ and $\rho_i\otimes S_{a_i} \otimes S_{b_i}$ is of good parity.
\end{enumerate}
Set $x_i:=0$ for $i \in I_{bp}\sqcup I_{gp}$. For $\ast \in  \{ nu, \ bp,\ gp  \}$, define subrepresentations $\psi_{\ast}$ of $\psi$ by 
\[ \psi_{\ast}:= \bigoplus_{i \in I_{\ast}} \rho_i\lvert \cdot \rvert^{x_i} \otimes S_{a_i} \otimes S_{b_i}.\]
Thus, $\psi_{gp}$ is a local Arthur parameter of $G_{m'}$ of good parity for some $m'\leq n$, and 
\begin{align}\label{eq decomp red to gp}
    \psi=( \psi_{nu}+ \psi_{nu}^{\vee}) +(\psi_{bp} + \psi_{bp}^{\vee})+ \psi_{gp}.
\end{align}
Also set $\psi_{u}:= (\psi_{bp}+\psi_{bp}^{\vee}) + \psi_{gp}$. Then $\psi_{u}\in \Psi(G_m)$ for some $m \leq n$.

For each $i \in I_{nu} \sqcup I_{bp}$, define $\tau_i:= u_{\rho_i}(a_i,b_i) \lvert \cdot \rvert^{x_i}$ (see \eqref{u rho a b}), which is the unique irreducible representation in the local Arthur packet $\Pi_{\rho_i \lvert \cdot \rvert^{x_i} \otimes S_{a_i}\otimes S_{b_i}}$ of $\GL_n(F).$ We set 
\[ \tau_{\psi_{nu}}:=\bigtimes_{i \in I_{nu}} \tau_i\quad {\rm and}\quad \tau_{\psi_{bp}}:= \bigtimes_{i \in I_{bp}} \tau_i,\]
which are irreducible and the order of the products does not matter. We recall a smaller subset $ \Psi_{\mathrm{unit}}^{+}(G_n) \subset \Psi^+_{1/2}(G_n)$ defined by
\begin{align}\label{eq def Psi^+_unit}
    \Psi_{\mathrm{unit}}^{+}(G_n):= \{ \psi\in \Psi^+_{1/2}(G_n)  \ | \ \bigtimes_{i \in I_{nu}}  \tau_i \times \tau_i^{\vee} \text{ is unitary} \}.
\end{align}
Based on the classification of the unitary dual of any general linear group in \cite{Tad86}, this definition is equivalent to $\widetilde{\Psi}^+_{\mathrm{unit}}(G_n)$ in \cite[\S 1.5]{Art13}, and is equivalent to the collection of $\psi \in \Psi^+_{1/2}(G_n)$ such that the index set $I_{nu}$ can be equipped with a duality map
\begin{align*}
    (\cdot)^{\vee}: I_{nu} &\to I_{nu},\\
    i & \mapsto i^{\vee}
\end{align*}
satisfying that $a_{i^{\vee}}=a_i$, $b_{i^{\vee}}= b_i$, $x_i=x_{i^{\vee}}$ and $\rho_{i^{\vee}} \cong (\rho_i)^{\vee}$.

For $\psi \in \Psi^{+}_{1/2}(G_n) \backslash \Psi(G_n)$ (i.e. $I_{nu}$ is non-empty), 
the local Arthur packet $\Pi_{\psi}$ is defined to be the set of representations (\cite[(1.5.1)]{Art13})
\begin{align}\label{eq def packet A+}
    \Pi_{\psi}:=\{ \tau_{nu,>0} \rtimes \pi_{u}\ | \ \pi_u \in \Pi_{\psi_u}   \}.
\end{align}
 Indeed, Arthur only defined $\Pi_{\psi}$ for $\psi \in \Psi^{+}_{\mathrm{unit}}(G_n)$, but the definition can be extended naturally for $\psi \in \Psi^{+}_{1/2}(G_n)$. In \cite{Moe11b}, M{\oe}glin showed that for $\psi \in \Psi^+_{1/2}(G_n)$, the parabolic inductions in the right hand side of \eqref{eq def packet A+} are all irreducible. Moreover, $\pi_u$ can be further written as a parabolic induction according to the decomposition of $\psi_{u}$.

\begin{thm}[{\cite[Proposition 5.1]{Moe11b}}]\label{thm red from nu to gp}
Let $\psi\in\Psi^{+}_{1/2}(G_n)$ with decomposition \eqref{eq decomp red to gp}. Then, for any $\pi_{gp}\in\Pi_{\psi_{gp}},$ the parabolic induction $\tau_{\psi_{nu}}\times\tau_{\psi_{bp}}\rtimes\pi_{gp}$ is irreducible. As a consequence, \begin{equation}\label{non-unitary A-packet}
    \Pi_\psi=\{\tau_{\psi_{nu}}\times\tau_{\psi_{bp}}\rtimes\pi_{gp}  | ~ \pi_{gp}\in\Pi_{\psi_{gp}}\}.
\end{equation}
\end{thm}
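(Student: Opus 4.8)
The plan is to prove Theorem \ref{thm red from nu to gp}, which asserts irreducibility of $\tau_{\psi_{nu}}\times\tau_{\psi_{bp}}\rtimes\pi_{gp}$, by reducing the general mixed situation to two basic irreducibility facts: first, irreducibility of unitary parabolic induction of a generalized Speh representation $u_{\rho}(a,b)$ (with no twist, $x_i=0$) off a representation of good parity; and second, irreducibility of the twisted inductions $u_\rho(a,b)\lvert\cdot\rvert^x\rtimes(\cdot)$ for $0<|x|<\half{1}$, which is already folded into the definition \eqref{eq def packet A+}. First I would isolate the genuinely new content, namely the $\psi_{bp}$ part: the summands $\rho_i\otimes S_{a_i}\otimes S_{b_i}$ with $x_i=0$ but \emph{not} of good parity. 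The strategy here is to commute each $\tau_i=u_{\rho_i}(a_i,b_i)$ past $\pi_{gp}$ using standard Jacquet-module / intertwining-operator arguments: when $\rho$ is self-dual but of the ``wrong'' parity relative to $G_n$, the relevant Plancherel/Harish-Chandra $\mu$-function has no pole or zero at the point governing $u_\rho(a,b)\rtimes\sigma$, so the induction is irreducible.

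The key steps, in order, are as follows. (1) Recall the cuspidal support / Zelevinsky-Mœglin combinatorics: $u_\rho(a,b)$ has cuspidal support supported on segments $[\frac{a-b}{2}+j,\frac{-a-b}{2}+1+j]_\rho$, and analyze which of these can interlace with the cuspidal support of $\pi_{gp}$. (2) For $i\in I_{bp}$ (wrong-parity self-dual $\rho_i$, or non-self-dual $\rho_i$ paired with $\rho_i^\vee$): show $\tau_i\rtimes\sigma$ is irreducible for any $\sigma$ whose cuspidal support consists only of good-parity segments, by a standard computation of the normalized intertwining operator $M(s)$ — the poles of $M(s)$ come only from reducibility points of $\rho\lvert\cdot\rvert^s\rtimes(\text{good parity stuff})$, which forces either self-duality of $\rho$ or a half-integral reducibility point, both excluded. (3) For $i\in I_{nu}$: reduce to the definition \eqref{eq def packet A+}, where $\tau_{nu,>0}\rtimes\pi_u$ is already the defining induction; here irreducibility is essentially built in, but one must check that the ordering/bracketing of the product $\tau_{\psi_{nu}}\times\tau_{\psi_{bp}}$ is immaterial, which follows from commuting operators between the $nu$-part and the $bp$-part (their exponents live in disjoint ``strips'' of $\R$, or are handled by the unitarity hypothesis packaged in $\Psi^+_{\mathrm{unit}}$). (4) Assemble: induction in stages, $\tau_{\psi_{nu}}\times\tau_{\psi_{bp}}\rtimes\pi_{gp}=\tau_{\psi_{nu}}\rtimes(\tau_{\psi_{bp}}\rtimes\pi_{gp})$, apply (2) to the inner induction to get an irreducible $\pi_u\in\Pi_{\psi_u}$, then apply the $nu$-case to conclude.

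I expect the main obstacle to be step (2): verifying that $u_\rho(a,b)\rtimes\sigma$ stays irreducible for \emph{every} $\sigma\in\Pi_{\psi_{gp}}$, not just for generic or tempered $\sigma$. The generalized Speh representation $u_\rho(a,b)$ is typically non-tempered and non-generic, so one cannot simply invoke Shahidi's reducibility criterion for generic inducing data; instead one needs a Mœglin-style analysis of the Jacquet modules of $u_\rho(a,b)\rtimes\sigma$, controlling all possible irreducible subquotients via the Mœglin-Tadić classification of Jacquet modules of Arthur-type representations and showing the socle exhausts everything. A clean alternative — and the route I would actually pursue — is to deduce irreducibility from a determinant/character computation: $u_\rho(a,b)\rtimes\sigma$ is a unitarily induced (or appropriately twisted) representation whose length can be read off from the poles of a product of rank-one $\mu$-functions $\prod_j \mu(\rho\lvert\cdot\rvert^{c_j}\rtimes(\cdot))$ evaluated along the deformation; when $\rho$ has the wrong parity for $G_n$ \emph{and} $\sigma$'s support is of good parity, every such factor is holomorphic and non-zero at the relevant point, yielding length one. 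Finally, one must double-check the claim that $\pi_u$ itself decomposes as a parabolic induction according to $\psi_u=(\psi_{bp}+\psi_{bp}^\vee)+\psi_{gp}$; this is exactly the statement that $\tau_{\psi_{bp}}\rtimes\pi_{gp}$ is irreducible, which is step (2) again, so the whole theorem hinges on that single irreducibility input.
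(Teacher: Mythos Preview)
The paper does not contain a proof of this statement: Theorem~\ref{thm red from nu to gp} is simply quoted as \cite[Proposition~5.1]{Moe11b}, with the surrounding text attributing the full argument to M{\oe}glin. There is therefore nothing in the paper to compare your proposal against.

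That said, your outline is broadly in the spirit of M{\oe}glin's actual proof in \cite{Moe11b}: she also separates the bad-parity and non-unitary pieces and argues via holomorphy/nonvanishing of normalized intertwining operators rather than by brute-force Jacquet-module bookkeeping. Your identification of step~(2) as the crux is correct, and your proposed route through the rank-one $\mu$-factors is essentially what is done there. One caution: your ``clean alternative'' claim that the length of $u_\rho(a,b)\rtimes\sigma$ can be \emph{read off} from poles of $\prod_j\mu(\rho\lvert\cdot\rvert^{c_j}\rtimes(\cdot))$ is too optimistic as stated --- for non-generic, non-tempered $\sigma$ the $\mu$-function governs only the standard intertwining operator between the two induced modules, not the full length, so holomorphy and nonvanishing give you that the standard operator is an isomorphism, which together with the Langlands-quotient structure yields irreducibility, but it is not a ``length $=$ number of poles'' statement. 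M{\oe}glin's argument in \cite{Moe11b} is more careful on exactly this point, working with the normalized operators attached to Arthur parameters rather than bare $\mu$-functions.
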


\subsection{Component groups and its combinatorial description}\label{sec character}
Let $\psi$ be a local Arthur parameter of $G_n$. The component group $\mathcal{S}_{\psi}$ is defined as
\[ \mathcal{S}_{\psi}:= \textrm{Cent}(\textrm{im} (\psi), \RG_n^{\vee}(\BC))/\textrm{Cent}(\textrm{im} (\psi), \RG_n^{\vee}(\BC))^{\circ}  Z(\RG_n^{\vee}(\BC))^{\Gamma}.\]
We let $\widehat{\mathcal{S}}_{\psi} $ be its Pontryagin dual. Namely, the set of irreducible representations of $\mathcal{S}_{\psi}$. In the case of classical groups, $\mathcal{S}_{\psi}$ is always a finite abelian $2$-group. Hence, $\widehat{\mathcal{S}}_{\psi} $ is simply the set of its characters. An important ingredient in the definition of a local Arthur packet is a map
\begin{align*}
    \Pi_{\psi} &\to \widehat{\mathcal{S}}_{\psi},\\
    \pi& \mapsto \langle \cdot, \pi \rangle_{\psi},
\end{align*}
which is neither injective nor surjective in general.

We now recall a combinatorial description of these finite abelian groups. Given a local Arthur parameter $\psi$ with decomposition
\[
\psi = \bigoplus_{i \in I} \rho_i \otimes S_{a_i} \otimes S_{b_i},
\]
let $I_{gp}$ be the subset of $I$ such that $\rho_i \otimes S_{a_i} \otimes S_{b_i}$ is of good parity. We let 
\[ A_{\psi}:= \bigoplus_{i \in I_{gp}} \Z/2\Z e_i,\]
a $\Z/2\Z$-vector space of dimension $|I_{gp}|$, or an abelian $2$-group of size $ 2^{|I_{gp}|}$. Let 
\begin{itemize}
    \item $A_{\psi}^0$ be the subgroup generated by $e_i+e_j$ for any $(i,j) \in I_{gp}^2$ such that $\rho_i\otimes S_{a_i}\otimes S_{b_i} \cong \rho_j\otimes S_{a_j}\otimes S_{b_j}$.
    \item Let $z_{\psi}:= \sum_{i \in I_{gp}} e_i.$
\end{itemize}
Then we have an isomorphism
\[ \mathcal{S}_{\psi}\cong A_{\psi}/ (A_{\psi}^0 + z_{\psi}).\]
The character group $\widehat{\mathcal{S}}_\psi$ is the dual $\Z/2\Z$-vector space of $\mathcal{S}_{\psi}$, which is in bijection with the set of functions 
\[
\varepsilon \colon I_{gp} \to \{-1,1\}
\]
satisfying the following conditions:
\begin{enumerate}
    \item $\varepsilon(i) = \varepsilon(j)$ whenever $\rho_i \otimes S_{a_i} \otimes S_{b_i} \cong \rho_j \otimes S_{a_j} \otimes S_{b_j}$;
    \item $\prod_{\substack{i \in I \\ \varepsilon(i) \neq 0}} \varepsilon(i) = 1$ (product over all nonzero values).
\end{enumerate}
For $e= \sum_{i \in I_{gp}} a_i e_i + (A_{\psi}^0 +z_{\psi}) \in \mathcal{S}_{\psi}$ and $\varepsilon \in \widehat{\mathcal{S}}_{\psi}$, we have
\[ \varepsilon(e)= \prod_{i \in I_{gp}}(-1)^{a_i} \varepsilon(i).\]
For more detailed discussion of the above combinatorial description, we refer to \cite[\S 4]{GGP12}.

\begin{remark}
     By condition (2), we may unambiguously write 
    \[
    \varepsilon(\rho_i \otimes S_{a_i} \otimes S_{b_i}) := \varepsilon(i)
    \]
    for the value on isomorphic summands.
\end{remark}

\section{A criterion on non-unitarity and intertwining operators}\label{sec-LIR}

In this section, we recall a criterion of Mui{\'c} and Tadi{\'c} which determines whether a certain parabolic induction is not unitary (\cite[(RP)(i)]{MT11}, see Lemma \ref{lemma non-unitary}). Then, we briefly recall Arthur's normalized intertwining operator (\cite[(2.4.4)]{Art13}) and use the criterion along with Arthur's local intertwining relation (\cite[Theorem 2.4.1]{Art13}) to conclude that certain parabolic inductions are not unitary (Corollary \ref{cor base case non-unitary}) under a certain assumption (Theorem \ref{thm base case reducible components}) which will be proved in \S \ref{sec-proof}. 

\subsection{The main strategy on proving non-unitarity}

We begin by recalling a non-unitary criterion from \cite{MT11}, specialized to the case under consideration.

Let $M= \GL_d(F) \times G_{n_0}$ be a Levi subgroup of $G_n$ and consider a local Arthur parameter $\psi_M$ of $M$:
\[\psi_M= ( \rho\otimes S_{a}\otimes S_b )  \boxtimes \psi_{0},\]
where 
\begin{itemize}
    \item $\rho \cong \rho^{\vee}$ and $d= \dim(\rho)ab$, and
    \item $\psi_{0}$ is a local Arthur parameter of $G_{n_0}$.
\end{itemize}
Let $\psi$ be the local Arthur parameter of $G_{n}$ by composing $\psi_M$ with the embedding ${}^L M \hookrightarrow {}^L G$. Thus,
\[\psi=  (\rho\otimes S_{a}\otimes S_b )^{\oplus 2} \oplus \psi_0.\]

Under this setting, $W(M)=N_G(M)/M$ consists of exactly two elements, and we let $w$ be the non-trivial one. Let $\pi_M= u_{\rho}(a,b) \otimes \pi_{0}$ be an irreducible representation in $\Pi_{\psi_M}$. We let $N(0)$ be the normalized intertwining operator
\[\langle \widetilde{u},\widetilde{\pi}_M\rangle R_{P}(w_u, \widetilde{\pi}, \psi):\Ind_P^{G_n} (u_{\rho}(a,b)\otimes \pi_{0})\to \Ind_P^{G_n} (u_{\rho}(a,b) \otimes \pi_{0}) \]
defined in \cite[(2.4.4)]{Art13}, where $u$ is chosen such that $w_u=w$. We refer the reader to \cite[\S 2.3]{Art13} or \cite[\S 1.7, 1.10]{AGIKMS24} for the precise definition. We can extend it to a family of intertwining operators 
\[N(s): \Ind_P^{G_n} (u_{\rho}(a,b)\lvert \cdot \rvert^{s} \otimes \pi_{0})\to \Ind_P^{G_n} (u_{\rho}(a,b)\lvert \cdot \rvert^{-s} \otimes \pi_{0}). \]

Let
\[\Pi_s:= \Ind_{P}^G u_{\rho}(a,b)\lvert \cdot \rvert^s \otimes \pi_{0}, \]
which is irreducible for $0<s<\half{1}$ by Theorem \ref{thm red from nu to gp}. 
If $\Pi_0=\Ind_{P}^{G_n} \pi_M $ is irreducible, then clearly $\{ \Pi_s\ | \ 0 \leq s <\half{1}\}$ is a continuous family of irreducible Hermitian representations with $\Pi_0$ unitary. Hence, $\{ \Pi_s \ | \ 0 < s<\half{1}\}$ are all unitary.

On the other hand, if $\Pi_0$ is reducible, then we recall a criterion of Mui{\'c} and Tadi{\'c} on the non-unitarity of $\{ \Pi_s\ | \ 0 < s <\half{1}\}$.

\begin{lemma}[{\cite[(RP)(i)]{MT11}}]\label{lemma non-unitary}
   Suppose that $\Pi_0=\mathrm{Ind}_P^{G_n}(\pi_M)$ is reducible and $N(0)$ is not a scalar. Then $\Pi_s$ is not unitary for $0<s < \half{1}.$
\end{lemma}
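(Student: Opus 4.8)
The strategy is the standard one for detecting non-unitarity along a complementary series, which is exactly the content of \cite[(RP)(i)]{MT11} specialized to our rank-one situation, so the plan is to recall how that argument goes in the present setup. Consider the family $\{\Pi_s : 0 \le s < \frac12\}$ together with the normalized intertwining operator $N(s) \colon \Pi_s \to \Pi_{-s}$. Each $\Pi_s$ carries an invariant Hermitian form if and only if $\Pi_{-s} \cong \overline{\Pi_s}^\vee$, which holds here because $u_\rho(a,b)$ is unitary (so $u_\rho(a,b)|\cdot|^{-s}$ is the Hermitian dual of $u_\rho(a,b)|\cdot|^{s}$) and $\pi_0$ lies in a local Arthur packet, hence is Hermitian; thus the composition of $N(s)$ with the canonical intertwiner $\Pi_{-s} \to \overline{\Pi_s}^\vee$ furnishes, up to normalization, the invariant Hermitian form on $\Pi_s$. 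The first step is therefore to fix this normalization so that $N(s)$ is holomorphic and nonzero near $s = 0$ and so that the associated Hermitian form varies continuously in $s$; this is exactly where Arthur's normalization of $R_P(w_u,\widetilde\pi,\psi)$ in \cite[(2.4.4)]{Art13} is used, since it guarantees $N(0)$ is a well-defined finite operator (not $0$, not $\infty$).

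Next I would analyze $N(0)$. Since $\Pi_0 = \mathrm{Ind}_P^{G_n}(\pi_M)$ is assumed reducible, and $N(0)$ is an intertwining endomorphism of $\Pi_0$ which by hypothesis is not a scalar, $N(0)$ must have at least two distinct eigenvalues. These eigenvalues are real: $N(0)$ is, up to a positive scalar, the operator computing the invariant Hermitian form on $\Pi_0$, which is unitary (being a point in a local Arthur packet), so the form is definite and $N(0)$ acts as a nonzero real scalar on each irreducible constituent, with the scalars of the same sign. One then observes (this is the crux of the Mui\'c--Tadi\'c argument) that for $s$ in a punctured neighborhood of $0$ the Hermitian form on the \emph{irreducible} representation $\Pi_s$ is, after the same normalization, given by a single scalar $c(s) \in \R^\times$ depending continuously on $s$, and that the signature of the form on $\Pi_0$ is recovered as the limit: the decomposition of $N(0)$ into eigenspaces forces $c(s)$ to change sign (or the form to become indefinite) as $s$ crosses $0$ precisely because $N(0)$ is not scalar. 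Concretely, writing the leading behavior of the normalized operator near $s=0$ and using that distinct constituents of $\Pi_0$ contribute eigenvalues of opposite sign to a suitable renormalization of $N(s)$, the form on $\Pi_s$ for small $s>0$ picks up both signs, hence is indefinite, hence $\Pi_s$ is not unitary.

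Finally, I would upgrade ``not unitary for small $s > 0$'' to ``not unitary for all $0 < s < \frac12$.'' Since $\{\Pi_s : 0 \le s < \frac12\}$ is a continuous family of irreducible Hermitian representations and the set of $s$ for which $\Pi_s$ is unitary is closed in $[0,\frac12)$ (unitarity is a closed condition along continuous families of irreducible Hermitian representations, by e.g. the standard argument that a limit of positive-definite invariant forms is positive semidefinite, hence definite by irreducibility), if $\Pi_s$ were unitary for some $s_0 \in (0,\frac12)$ then by the same signature-continuity argument it would be unitary on the whole interval $[0, s_0]$, contradicting the previous step. Hence $\Pi_s$ is non-unitary for every $0 < s < \frac12$.

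The main obstacle is the second paragraph: making precise that $N(0)$ being non-scalar forces the Hermitian form on the nearby irreducible $\Pi_s$ to be indefinite. This requires keeping careful track of Arthur's normalization so that the Hermitian pairing and the operator $N(s)$ are compatibly normalized, and then extracting the sign change from the Jordan/eigenspace structure of $N(0)$ — but since this is precisely \cite[(RP)(i)]{MT11}, I would simply invoke that reference for the technical core and restrict my write-up to verifying that its hypotheses (a continuous family of irreducible Hermitian representations for $0 < s < \frac12$ with $\Pi_0$ unitary, and a normalized intertwining operator with the stated properties) are met in our setting via Theorem \ref{thm red from nu to gp} and Arthur's results.
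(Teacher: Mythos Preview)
The paper does not give its own proof of this lemma: it is stated as a citation of \cite[(RP)(i)]{MT11} and used as a black box in the proof of Corollary \ref{cor base case non-unitary}. Your bottom line is the same---invoke \cite{MT11} after checking its hypotheses---so in that sense your plan matches the paper exactly.

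That said, the expository sketch you give of the Mui{\'c}--Tadi{\'c} argument contains an internal inconsistency that you should fix if you intend to include it. You write that, since $\Pi_0$ is unitary, ``the form is definite and $N(0)$ acts as a nonzero real scalar on each irreducible constituent, with the scalars of the same sign,'' and then immediately claim that this forces the Hermitian form on $\Pi_s$ to be indefinite for small $s>0$. These two assertions are in tension: if the eigenvalues of $N(0)$ really had the same sign, the limiting form would be definite and continuity would give unitarity, not destroy it. The confusion is between two different forms on $\Pi_0$: the natural $L^2$ inner product coming from unitary induction (which is indeed positive definite, since $\pi_M$ is unitary) and the form $\langle N(0)\,\cdot\,,\,\cdot\,\rangle_{L^2}$, which is the $s\to 0$ limit of the invariant Hermitian forms on $\Pi_s$. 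Unitarity of $\Pi_0$ says nothing about the signs of the eigenvalues of $N(0)$ with respect to the $L^2$ pairing.

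The missing ingredient is that in this rank-one situation the Weyl element $w$ has order two, and Arthur's normalization is arranged so that $N(0)^2=\mathrm{id}$. Hence the eigenvalues of $N(0)$ lie in $\{\pm 1\}$, and ``$N(0)$ not a scalar'' forces both $+1$ and $-1$ to occur. (This is exactly what the paper verifies via \eqref{eq A-LIR} and Theorem \ref{thm base case reducible components}: the scalars $c(\pi)=\langle s_u,\pi\rangle_\psi$ take both values $\pm 1$.) Consequently the limiting form $\langle N(0)\,\cdot\,,\,\cdot\,\rangle_{L^2}$ is genuinely indefinite, and by continuity so is the invariant Hermitian form on $\Pi_s$ for small $s>0$; your final propagation step to all of $(0,\tfrac12)$ via irreducibility (Theorem \ref{thm red from nu to gp}) is then correct.
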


\subsection{Local intertwining relation}\label{subsec LIR}

In this subsection, we recall the local intertwining relation (\cite[Theorem 2.4.1]{Art13}) in our setting, which describes the action of the intertwining operator $N(0)$ on $\Pi_0=\Ind_P^{G_n} (u_{\rho}(a,b) \otimes \pi_{0})$. We also refer to \cite[\S 1.7]{AGIKMS24} for more details.

First, recall that there is an injection between the component groups, which induces a surjection map between their Pontryagin duals via restriction:
\[ \iota:\mathcal{S}_{\psi_{0}} \cong \mathcal{S}_{\psi_M} \hookrightarrow   \mathcal{S}_{\psi},\ \ \ \iota^{\ast}:  \widehat{\mathcal{S}}_{\psi}  \twoheadrightarrow \widehat{\mathcal{S}}_{\psi_{0}}.\]
With the combinatorial description of these component groups, we see that the above maps are isomorphisms unless $\rho\otimes S_{a}\otimes S_{b}$ is of good parity and $\psi_{0}$ does not contain any summand isomorphic to $\rho\otimes S_{a}\otimes S_{b}$.

Fixing $\varepsilon_{0} \in  \widehat{\mathcal{S}}_{\psi_{0}},$ it is known that
\begin{align}\label{eq unitary induction Arthur}
    \bigoplus_{\substack{\pi_{0} \in \Pi_{\psi_{0}},\\ \langle \cdot, \pi_{0} \rangle_{\psi}= \varepsilon_{0}}} u_{\rho}(a,b) \rtimes \pi_{0}= \bigoplus_{\substack{\pi \in \Pi_{\psi}, \\ \iota^{\ast} (\langle \cdot, \pi \rangle_{\psi})= \varepsilon_{0} }} \pi.
\end{align}
See \cite[Proposition 4.2]{Ato22d}. An explicit decomposition of an individual induction $u_{\rho}(a,b) \rtimes \pi_{0}$ is described in \cite{Ato22d} using extended multi-segments, which we recall in \S \ref{sec unitary induction of Arthur type}. In particular, \eqref{eq unitary induction Arthur} implies that
\[ \bigoplus_{\pi_M \in \Pi_{\psi_0}} \Ind_{P}^G \pi_M = \bigoplus_{\pi \in \Pi_{\psi}} \pi. \]
It is known that local Arthur packets are multiplicity free (\cite{Moe11a}). Hence any induction $\Pi_0$ is multiplicity free, and the intertwining operator $N(0)$ acts on each irreducible subquotient $\pi$ of $\Pi_0$ by a scalar, which we denote by $c(\pi)$.

Let $s_u=e + (A_{\psi}^0 +z_{\psi}) $ in $\mathcal{S}_{\psi}$, where $e$ corresponds to a summand $\rho\otimes S_{a}\otimes S_b$ inside $\psi$ (see \S \ref{sec character}). Note that $s_u$ generates the quotient $\mathcal{S}_{\psi}/ \iota(\mathcal{S}_{\psi_M})$. The local intertwining relation (\cite[Theorem 2.4.1]{Art13}) is the equation in the Grothendieck group
\begin{align}\label{eq A-LIR}
 \tag{A-LIR} \sum_{\pi \in \Pi_{\psi}} \langle   s_u, \pi \rangle_{\psi} \pi= \sum_{\pi_{M} \in \Pi_{\psi_{M}}}  \sum_{\pi \leq \Ind_P^G \pi_M} c(\pi) \pi= \sum_{\pi \in \Pi_{\psi}} c(\pi) \pi.
\end{align}
Again, since $\Pi_{\psi}$ is multiplicity free, we see that $c(\pi)= \langle s_u , \pi \rangle_{\psi}$. This is the (LIR) in \cite[\S 1.10]{AGIKMS24}.

Later, we shall prove the following theorem, which is implied by Theorem \ref{thm reducibility unitary induction}.

\begin{thm}\label{thm base case reducible components}
Let $\pi_{0}\in \Pi_{\psi_0}$.   Suppose that $\Pi_{0}= u_{\rho}(a,b) \rtimes \pi_{0}$ is reducible. Then there exists $\pi_1 , \pi_2 \leq \Pi_0$ such that 
\[ \langle s_u, \pi_{1}\rangle_{\psi}= - \langle s_u, \pi_{2}\rangle_{\psi}. \]
\end{thm}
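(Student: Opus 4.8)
The plan is to reduce the statement to a purely combinatorial assertion about extended multi-segments, using Atobe's explicit description (recalled later in \S\ref{sec unitary induction of Arthur type}) of the decomposition of $u_\rho(a,b)\rtimes\pi_0$ into irreducibles together with their characters in the component group. First I would fix an extended multi-segment $\mathcal{E}$ representing $\pi_0\in\Pi_{\psi_0}$, and translate the hypothesis that $\Pi_0=u_\rho(a,b)\rtimes\pi_0$ is reducible into a statement about the combinatorics of $\mathcal{E}$ and the extra summand $\rho\otimes S_a\otimes S_b$. By \eqref{eq unitary induction Arthur} and the multiplicity-one property of local Arthur packets, $\Pi_0$ is reducible precisely when the fiber of $\iota^\ast$ over $\varepsilon_0=\langle\,\cdot\,,\pi_0\rangle_\psi$ has more than one element, i.e.\ when $\mathcal{S}_\psi\to\mathcal{S}_{\psi_M}$ is a proper surjection (equivalently $\rho\otimes S_a\otimes S_b$ is of good parity and does not already occur in $\psi_0$) \emph{and} at least two of those characters are actually attained by representations appearing in $u_\rho(a,b)\rtimes\pi_0$.

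The key step is then: whenever reducibility holds, exhibit two constituents $\pi_1,\pi_2\le\Pi_0$ whose characters differ exactly in the coordinate $s_u$ corresponding to the new summand, so that $\langle s_u,\pi_1\rangle_\psi=-\langle s_u,\pi_2\rangle_\psi$. I would produce these by an explicit combinatorial move on the extended multi-segment attached to $\pi_0$: on the row/box data for $\rho\otimes S_a\otimes S_b$, toggling the relevant sign $\eta$-value (or performing the elementary ``push'' operation on an interval, in the language of \S\ref{sec Z-extended segments}) changes $\langle s_u,\cdot\rangle_\psi$ by $-1$ while keeping the restriction to $\mathcal{S}_{\psi_0}$ fixed, hence both resulting representations lie in $\Pi_0$. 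The point is that reducibility guarantees there is enough ``room'' in the multi-segment for such a toggle to land on a valid extended multi-segment producing a nonzero representation; this is where the notions of ``interval'' and ``adjacent'' developed in \S\ref{sec Z-extended segments} do the work, since they control precisely when such an elementary operation stays inside the packet and changes the character nontrivially.

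The main obstacle will be handling the degenerate configurations: cases where a naive sign toggle either forces the resulting extended multi-segment out of the admissible range, or produces the zero representation (so it is not a genuine constituent), or does not change $\langle s_u,\cdot\rangle_\psi$ because of cancellations in the component-group pairing. Overcoming this requires a careful case analysis organized by the relative position of the segment $\rho\otimes S_a\otimes S_b$ and the segments of $\psi_0$ with the same cuspidal support—exactly the ``adjacency'' casework of \S\ref{sec Z-extended segments}—and showing that in each case reducibility of $\Pi_0$ forces the existence of some admissible elementary move realizing the sign change. I expect that the combinatorial lemmas on intervals (their behavior under the operators used to build Arthur packets, and how $\langle s_u,\cdot\rangle_\psi$ transforms) will reduce this to finitely many base configurations that can be checked directly, after which \eqref{eq A-LIR} identifies $c(\pi)=\langle s_u,\pi\rangle_\psi$ and the criterion in Lemma \ref{lemma non-unitary} applies via Corollary \ref{cor base case non-unitary}.
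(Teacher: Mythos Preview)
Your approach is essentially the same as the paper's: reduce to extended multi-segments via Atobe's decomposition (Theorem~\ref{thm Atobe unitary induction}), and use the interval/adjacency machinery of \S\ref{sec Z-extended segments} to produce two constituents whose characters differ at $s_u$. The paper's argument (packaged as Theorem~\ref{thm reducibility unitary induction}) is exactly this: the constituents of $u_\rho(a,b)\rtimes\pi(\EE)$ are indexed by an \emph{interval} $S\subseteq\Eseg_{[A,B]_\rho}$ (Proposition~\ref{prop NV segment}(1)), and adjacent elements of $S$ have opposite character value at $\rho\otimes S_a\otimes S_b$ (Remark~\ref{rmk character}(2)); so reducibility ($|S|\ge 2$) immediately yields $\pi_1,\pi_2$ with $\langle s_u,\pi_1\rangle_\psi=-\langle s_u,\pi_2\rangle_\psi$.

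Two remarks on your write-up. First, your translation of reducibility in the opening paragraph is circular: you say $\Pi_0$ is reducible precisely when the fiber of $\iota^\ast$ is nontrivial \emph{and} at least two characters are attained---but the latter is exactly the conclusion you are trying to prove, not an independent reformulation. Second, the ``careful case analysis organized by relative position'' you anticipate for the degenerate configurations is not needed at this level: once Proposition~\ref{prop NV segment}(1) tells you the nonzero constituents form an interval, the alternation of signs on adjacent elements is a one-line consequence of the character formula~\eqref{eq character}, with no further casework. The genuine case analysis is buried inside the proof of Lemma~\ref{lem adj} and Proposition~\ref{prop NV segment}, not in the proof of Theorem~\ref{thm base case reducible components} itself. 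Finally, the last sentence of your proposal (invoking Lemma~\ref{lemma non-unitary} and Corollary~\ref{cor base case non-unitary}) belongs to the application of Theorem~\ref{thm base case reducible components}, not to its proof.
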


For the rest of this subsection, we assume that Theorem \ref{thm base case reducible components} holds. As a consequence, we are able to obtain the non-unitarity of a certain parabolic induction.

\begin{cor}\label{cor base case non-unitary}
Let $\pi_{0}\in \Pi_{\psi_0}$.   Suppose that $\Pi_{0}= u_{\rho}(a,b) \rtimes \pi_{0}$ is reducible. Then, the irreducible representations $\Pi_{s}:= u_{\rho}(a,b)\lvert \cdot \rvert^{s} \rtimes \pi_{0}$ are not unitary for any $0 < s <\half{1}$.
\end{cor}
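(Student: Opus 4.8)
The proof of Corollary \ref{cor base case non-unitary} is a direct combination of Theorem \ref{thm base case reducible components} with the non-unitarity criterion of Mui\'c--Tadi\'c (Lemma \ref{lemma non-unitary}). The plan is as follows. By hypothesis, $\Pi_0 = u_\rho(a,b) \rtimes \pi_0$ is reducible, so to invoke Lemma \ref{lemma non-unitary} it suffices to show that the normalized intertwining operator $N(0)$ is not a scalar on $\Pi_0$.

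First I would recall from \S\ref{subsec LIR} that, since local Arthur packets are multiplicity free, $\Pi_0$ is multiplicity free, and $N(0)$ acts on each irreducible constituent $\pi \leq \Pi_0$ by a scalar $c(\pi)$; moreover the local intertwining relation \eqref{eq A-LIR} identifies this scalar with the value of the character $\langle \cdot, \pi \rangle_\psi$ at the distinguished element $s_u \in \mathcal{S}_\psi$, i.e. $c(\pi) = \langle s_u, \pi \rangle_\psi \in \{\pm 1\}$. Thus $N(0)$ is a scalar on $\Pi_0$ if and only if the value $\langle s_u, \pi\rangle_\psi$ is the same for all constituents $\pi \leq \Pi_0$.

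Now I would apply Theorem \ref{thm base case reducible components}: since $\Pi_0$ is reducible, there exist constituents $\pi_1, \pi_2 \leq \Pi_0$ with $\langle s_u, \pi_1\rangle_\psi = -\langle s_u, \pi_2 \rangle_\psi$. Hence $c(\pi_1) = -c(\pi_2)$, so $N(0)$ takes at least two distinct values on the constituents of $\Pi_0$ and therefore cannot be a scalar. Finally, applying Lemma \ref{lemma non-unitary} to the family $\{\Pi_s \mid 0 \leq s < \tfrac{1}{2}\}$ — which is a continuous family of Hermitian representations with $\Pi_s$ irreducible for $0 < s < \tfrac12$ by Theorem \ref{thm red from nu to gp} — we conclude that $\Pi_s = u_\rho(a,b)\lvert \cdot \rvert^s \rtimes \pi_0$ is not unitary for any $0 < s < \tfrac{1}{2}$, as desired.

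There is essentially no obstacle here once Theorem \ref{thm base case reducible components} is granted; the only point requiring minor care is the bookkeeping identifying $c(\pi)$ with $\langle s_u, \pi \rangle_\psi$ via \eqref{eq A-LIR} and the multiplicity-one property, together with checking that the hypotheses of Lemma \ref{lemma non-unitary} (Hermitian family, reducibility at $s=0$, $N(0)$ non-scalar) are all in place. The genuine content of the argument is deferred to the proof of Theorem \ref{thm base case reducible components}, which in turn follows from Theorem \ref{thm reducibility unitary induction} in \S\ref{sec-proof}.
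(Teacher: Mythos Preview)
Your proposal is correct and follows essentially the same argument as the paper: invoke Theorem \ref{thm base case reducible components} to obtain constituents $\pi_1,\pi_2$ with opposite values of $\langle s_u,\cdot\rangle_\psi$, use \eqref{eq A-LIR} to conclude that $N(0)$ is not a scalar, and then apply Lemma \ref{lemma non-unitary} together with the irreducibility of $\Pi_s$ for $0<s<\tfrac12$ from Theorem \ref{thm red from nu to gp}. The paper's proof is slightly terser but the logical structure is identical.
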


\begin{proof}
By Theorem \ref{thm base case reducible components}, there exists $\pi_1 , \pi_2 \leq \Pi_0$ such that 
\[ \langle s_u, \pi_{1}\rangle_{\psi}= - \langle s_u, \pi_{2}\rangle_{\psi}. \] Thus, $N(0)$ cannot be a scalar as \eqref{eq A-LIR} implies that $c(\pi_1)=-c(\pi_2).$ From Lemma \ref{lemma non-unitary}, it follows that $\Pi_s$ is non-unitary for $s>0$ close to 0. Since $\Pi_s$ is irreducible for $0<s<\frac{1}{2}$ by Theorem \ref{thm red from nu to gp}, it follows that $\Pi_s$ is not unitary for $0<s<\frac{1}{2}$. 
\end{proof}

This corollary will crucially serve as the base case in proving Theorem \ref{thm A+,u} later. Our next goal is to establish Theorem \ref{thm base case reducible components}.

\section{\texorpdfstring{Extended $\Z$-segments}{}}\label{sec Z-extended segments}

In this section, we define a notion which we call \emph{extended $\mathbb{Z}$-segments} and study certain non-vanishing criteria of them. The non-vanishing criteria are used to detect the non-vanishing of certain representations in local Arthur packets in Theorem \ref{thm non-vanishing}. The results in this section serve as the main technical tools for proving Theorem \ref{thm base case reducible components} and Theorem \ref{thm A+,u}.
\subsection{Basic Definitions}\label{subsec:extended-segments}

In this section, we introduce and study \emph{extended $\mathbb{Z}$-segments}, which will serve as fundamental combinatorial objects in our analysis.

\begin{defn}\label{def:extended-segments}
\begin{enumerate}
    \item A \emph{$\mathbb{Z}$-segment} is a finite set of consecutive integers
    \[
    \Delta = [A,B] := \{A, A-1, \ldots, B\},
    \]
    where $A, B \in \mathbb{Z}$ satisfy $A \geq B$. We call $b(\Delta) := \#\Delta = A - B + 1$ the \emph{length} of $\Delta$.

    \item A \emph{virtual extended $\mathbb{Z}$-segment} is a triple $\mathfrak{e} = ([A,B], l, \eta)$, where:
    \begin{itemize}
        \item $[A,B]$ is a $\mathbb{Z}$-segment,
        \item $l \in \mathbb{Z}$ satisfies $l \leq \frac{b}{2}$, here $b$ is the length of $[A,B]$,
        \item $\eta \in \{\pm 1\}/E$, where the equivalence relation $E$ is given by:
        \[
        E = \begin{cases}
            \{\pm 1\} & \text{if } b = 2l, \\
            \{+1\} & \text{if } b > 2l.
        \end{cases}
        \]
    \end{itemize}
    We call $\mathfrak{e}$ an \emph{extended $\mathbb{Z}$-segment} if additionally $l \geq 0$. The \emph{support} of $\mathfrak{e}$ is $\Supp(\mathfrak{e}) := [A,B]$.

    \item Denote by:
    \begin{itemize}
        \item $\Eseg$ the set of all extended $\mathbb{Z}$-segments,
        \item $\VEseg$ the set of all virtual extended $\mathbb{Z}$-segments.
    \end{itemize}
    For any $\mathbb{Z}$-segment $\Delta$, define the fiber sets:
    \[
    \Eseg_\Delta := \{\mathfrak{e} \in \Eseg \mid \Supp(\mathfrak{e}) = \Delta\}, \quad
    \VEseg_\Delta := \{\mathfrak{e} \in \VEseg \mid \Supp(\mathfrak{e}) = \Delta\}.
    \]
\end{enumerate}
\end{defn}

We represent an extended $\mathbb{Z}$-segment $\mathfrak{e} = ([A,B], l, \eta)$ diagrammatically as follows:
\[
\mathfrak{e} = 
\left(
\begin{array}{rcl}
\underbrace{\overset{B}{\lhd} \lhd \cdots \overset{B+l-1}{\lhd}}_{l} 
&
\overset{B+l}{\odot} \odot \cdots \odot \overset{A-l}{\odot} 
&
\underbrace{\overset{A-l+1}{\rhd} \cdots \rhd \overset{A}{\rhd}}_{l}
\end{array}
\right),
\]
where:
\begin{itemize}
    \item The middle section $\odot\cdots\odot$ represents an alternating sequence determined by $\eta$ as follows:
    \begin{itemize}
        \item it starts with $\oplus$ when $\eta = +1$;
        \item it starts with $\ominus$ when $\eta = -1$.
    \end{itemize}
    \item The numbers above the symbols indicate their integer positions.
\end{itemize}

\begin{remark}\ 
    \begin{enumerate}
        \item We only visualize \emph{extended} $\mathbb{Z}$-segments (where $l \geq 0$), not general virtual extended $\mathbb{Z}$-segments.
        \item Virtual extended $\mathbb{Z}$-segments are used to simplify certain definitions and ensure certain operators remain well-defined in all cases.
    \end{enumerate}
\end{remark}

Next, we define the concept of adjacent and interval for virtual extended segments. 

\begin{defn}\label{def adjacent}\ 
    \begin{enumerate}
    \item [(a)]  We say two virtual extended $\Z$-segments $\mfr{e}_{i}= ([A_i,B_i],l_i, \eta_i)$, $i=1,2$, are adjacent if $\Supp(\mfr{e}_1)=\Supp(\mfr{e}_2)$ and one of the following conditions holds.
        \begin{itemize}
            \item There exists a lift $\eta_i\in \{\pm 1\}$ such that $\eta_1=\eta_2$ and $|l_1-l_2|=1$.
            \item $A_1-B_1+1$ is odd, $l_1=l_2=\half{A_1-B_1}$, and $\eta_1=-\eta_2$.
        \end{itemize}
        \item [(b)] We say that a finite set $S$ of virtual extended $\Z$-segments is a virtual interval if $S=\{\ee_1,\ldots, \ee_r\}$, where $\ee_i$ and $\ee_{i+1}$ are adjacent (and $\ee_i \neq \ee_j$ for $i \neq j$). We define the support of $S$ to be $\Supp(S):= \Supp(\ee)$ for any $\ee \in S$ if $S$ is non-empty.  We say a virtual interval $S$ is an interval if $S \subseteq \Eseg$.        
        \item [(c)] We say two non-empty virtual intervals $S_1, S_2$ are adjacent if $S_1\cup S_2$ is a virtual interval and $|S_1|+1=|S_1\cup S_2|= |S_2|+1.$
        \item [(d)] We say two intervals $S_1, S_2$  in $\Eseg$ are adjacent if there exist virtual intervals $S_1^+, S_2^+$ such that $S_i= S_i^+ \cap \Eseg$ and $S_1^+$ are adjacent to $S_2^+$.
    \end{enumerate}
\end{defn}

\begin{remark}
    We explain a picture for the above definitions. For a fixed $\mathbb{Z}$-segment $\Delta = [A,B]$, the set $\Eseg_\Delta$ of extended $\mathbb{Z}$-segments has cardinality $|\Eseg_\Delta| = A - B + 2$. We define a total order $>$ on $\Eseg_\Delta$ as follows:

\begin{enumerate}
    \item When $\Delta$ has odd length $b = A - B + 1 = 2x + 1$:
    \[
    \begin{aligned}
    (\Delta, 0, +1) > (\Delta, 1, +1) > \cdots > (\Delta, x, +1) > \\
    (\Delta, x, -1) > (\Delta, x-1, -1) > \cdots > (\Delta, 0, -1).
    \end{aligned}
    \]
    
    \item When $\Delta$ has even length $b = 2x$:
    \[
    \begin{aligned}
    (\Delta, 0, +1) > (\Delta, 1, +1) > \cdots > (\Delta, x, +1) = \\
    (\Delta, x, -1) > (\Delta, x-1, -1) > \cdots > (\Delta, 0, -1).
    \end{aligned}
    \]
\end{enumerate}

We can fix an order-preserving embedding $\iota: \Eseg_{\Delta} \cong \{  1, \ldots, A-B+2\}\subseteq \R$. Then, $\ee_1, \ee_2 \in \Eseg_{\Delta}$ are \emph{adjacent} if $|\iota(\ee_1)-\iota(\ee_2)|=1$, and $S \subseteq \Eseg_{\Delta}$ is an \emph{interval} if $\iota(S)= (x,y) \cap \iota(\Eseg_{\Delta})$ for some open interval $ (x,y) =\{z \in \R \ | \ x > z > y\} \subseteq \R$. Two intervals $S_1, S_2 \subseteq \Eseg_{\Delta}$ are \emph{adjacent} if, after relabeling if necessary, there exists an interval $I_1= (x,y)$ and $I_2=(x+1,y+1)$ such that $\iota(S_i)= I_i \cap \iota(\Eseg_{\Delta})$. 

The above interpretation of the notation has the drawback that the choice of the total order is not canonical. That is why we give the relatively abstract definition in Definition \ref{def adjacent}. However, the geometric picture is helpful to understand the motivation of the proof below.
\end{remark}

\begin{exmp}
    Consider $\Delta=[1,0]$. Then $\Eseg_{\Delta}= \{ \ee_1,\ee_2,\ee_3 \},$ where
    \begin{align*}
        \ee_1= \bordermatrix{ & 0&1 \cr  
        & \ominus & \oplus },\ \ 
        \ee_2= \bordermatrix{ & 0&1 \cr  
        & \lhd & \rhd },\ \ 
         \ee_3= \bordermatrix{ & 0&1 \cr  
      & \oplus & \ominus }.
    \end{align*}
    We also consider the following virtual extended segments 
\[\ee_{-1}:= ([1,0],-2,-1),\  \ee_{0}:= ([1,0],-1,-1),\ \  \ee_{4}=([1,0],-1,1),\ \ee_{5}=([1,0],-2,1). \]
Then the extended segment $\ee_i$ is adjacent to the (virtual) extended segments $\ee_{i+1}$ and $\ee_{i-1}$.    
Among the $8$ subsets of $\Eseg_{\Delta}$, only $\{\ee_1, \ee_3\}$ is not an interval. 

Next, we list all intervals that are adjacent to $S:=\{\ee_1\}$. Indeed, we may regard $\{\ee_1\}$ as the intersection of $\Eseg$ with the virtual intervals $S^{+}_1=\{\ee_1\}$ or $S^{+}_2=\{\ee_0, \ee_1\}$. The virtual interval $S^+_1$ is adjacent to the two virtual intervals $\{\ee_0\}$ and $\{\ee_2\}$, and the virtual interval $S^+_2$ is adjacent to the two virtual intervals $\{\ee_{-1},\ee_{0}\}$ and $\{\ee_{1},\ee_2\}$. By intersecting these virtual intervals with $\Eseg_{\Delta}$, we conclude that the interval $S$ is adjacent to the following three intervals
\[ \emptyset,\  \{\ee_2\},\  \{\ee_{1},\ee_2\}. \]
It is not hard to see that if we replace $S_{2}^+$ by any virtual interval $(S_2^{+})'$ such that $(S_2^{+})' \supset S_2^{+}$ and $(S_2^{+})'\cap \Eseg= S$, the result is unchanged. 

By a similar discussion, one can see that the interval $\{\ee_1,\ee_2,\ee_3\}$ is adjacent to $ \{\ee_1,\ee_2,\ee_3\}$, $\{\ee_1,\ee_2\},$ and $\{\ee_2,\ee_3\}.$
Also, the interval $\emptyset$ is adjacent to $\emptyset, \{\ee_1\}$ and $\{\ee_3\}$.
\end{exmp}

We list several observations, which are crucial for the proof of Proposition \ref{prop NV segment} below, the key reduction statement of our main result.
\begin{lemma}\label{lem key observation}
Let $ S_1, S_2$ be two non-empty intervals with $\Supp(S_1)=\Supp(S_2)$.
\begin{enumerate}
    \item [(a)] The set $S_1 \cap S_2$ is an interval.
    \item [(b)] There are at most $3$ intervals that are adjacent to $S_1$.
    \item [(c)] Suppose that the following conditions hold.
    \begin{itemize}
        \item $|S_1|>1 $, $|S_2|>1$ and $|S_1 \cap S_2 |=1$.
        \item There exists an interval $S_1'$ adjacent to $S_1$ and $|S_1' \cap S_2 | >0$.
    \end{itemize}
    Then $|S_1' \cap S_2 | =2$.
\end{enumerate}
\end{lemma}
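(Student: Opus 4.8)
The plan is to work throughout with the geometric picture described in the remark: after fixing the order-preserving embedding $\iota \colon \Eseg_\Delta \hookrightarrow \{1, \dots, A-B+2\} \subseteq \R$, an interval $S \subseteq \Eseg_\Delta$ is exactly a set of the form $(x,y) \cap \iota(\Eseg_\Delta)$ for a real open interval $(x,y)$. The key subtlety, and the reason the abstract Definition \ref{def adjacent} is needed, is that the endpoints of $\Eseg_\Delta$ behave specially: when $\Delta$ has even length the two ``extreme middle'' segments $(\Delta,x,+1)$ and $(\Delta,x,-1)$ are identified, so the total order has no repetition; but adjacency of intervals at the boundary must be checked via \emph{virtual} intervals that reach outside $\Eseg_\Delta$. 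So first I would set up a uniform bookkeeping: identify each interval $S$ with a pair of ``virtual endpoints'' via the virtual intervals $S^+$ witnessing its adjacencies, and record that extending $S^+$ further outside $\Eseg_\Delta$ (while keeping $S^+ \cap \Eseg = S$) does not change which intervals are declared adjacent to $S$ — this normalization is already illustrated in the worked example and should be isolated as a preliminary observation.

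For part (a), once $S_1 = (x_1,y_1) \cap \iota(\Eseg_\Delta)$ and $S_2 = (x_2,y_2) \cap \iota(\Eseg_\Delta)$, the intersection $S_1 \cap S_2 = (\max(x_1,x_2), \min(y_1,y_2)) \cap \iota(\Eseg_\Delta)$ — wait, with the convention $(x,y) = \{z : x > z > y\}$ it is $(\min(x_1,x_2),\max(y_1,y_2)) \cap \iota(\Eseg_\Delta)$ — which is again of the required form, hence an interval (the empty set counts as an interval under the convention being used, so no case analysis is needed). For part (b): $S_1$ is a consecutive block inside a linearly ordered set of size $A-B+2$; an interval adjacent to $S_1$ is obtained by sliding a witnessing virtual block $S_1^+$ one step left or one step right and re-intersecting with $\Eseg_\Delta$. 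Sliding left and sliding right give at most two intervals, and $S_1$ may itself be adjacent to $S_1$ (as in the example $\{\ee_1,\ee_2,\ee_3\}$, which happens precisely when $S_1^+$ already contains all of $\Eseg_\Delta$ and a virtual point on one side, so shifting still intersects $\Eseg_\Delta$ in $S_1$); counting these gives at most $3$. I would make this rigorous by enumerating the shift of the left virtual endpoint and the right virtual endpoint independently and observing each contributes at most one new interval beyond the possibility $S_1' = S_1$.

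For part (c), the hypotheses force a rigid local configuration. Since $|S_1 \cap S_2| = 1$ while $|S_1|, |S_2| > 1$, writing both as consecutive blocks, they meet in exactly one point which must be an endpoint of each block, and the blocks ``point away'' from each other: say $S_1 = [p-k, p]$ and $S_2 = [p, p+m]$ in $\iota$-coordinates with $k, m \geq 1$ (or the mirror image). An interval $S_1'$ adjacent to $S_1$ with $S_1' \cap S_2 \neq \emptyset$ must reach into $S_2$, i.e. past the point $p$; among the (at most three) intervals adjacent to $S_1$, the only one extending to the right of $p$ is the right-shift $[p-k+1, p+1]$, whose intersection with $S_2 = [p, p+m]$ is exactly $\{p, p+1\}$, of size $2$ (here $m \geq 1$ guarantees $p+1 \in S_2$). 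The genuine work is checking that this coordinate description survives the passage through the virtual/boundary formalism — in particular that ``right-shift by one'' really is the adjacency relation of Definition \ref{def adjacent}(d) near the even-length identified point and near the ends of $\Eseg_\Delta$, and that the endpoint $p$ cannot be one of the problematic extreme segments in a way that breaks the count. I expect \textbf{this boundary/parity bookkeeping to be the main obstacle}: the clean ``consecutive blocks of integers'' intuition is correct, but translating between $\iota$-coordinates, the $\eta$-labels, and the virtual-interval definition of adjacency — so that shifting left/right is literally the operation in Definition \ref{def adjacent} — requires a careful, slightly tedious case check on the two ends and on the even-length coincidence, and that is where I would spend the bulk of the argument.
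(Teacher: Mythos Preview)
Your proposal is correct and matches the paper's approach: the paper's proof is literally the single sentence ``These are straightforward consequences of the definitions,'' and your plan is exactly the unpacking of that sentence via the $\iota$-coordinate picture in the remark preceding the lemma. Your concern about boundary/parity bookkeeping is somewhat overstated --- once the remark's description of adjacency as ``$\iota(S_i) = I_i \cap \iota(\Eseg_\Delta)$ with $I_2$ a unit shift of $I_1$'' is accepted (and the paper clearly treats that translation as already established by the remark), parts (a)--(c) are genuinely just the integer-block arguments you wrote down, with no special even-length or endpoint cases needed beyond noting that $|S_2|>1$ forces $p+1 \in S_2$ in your (c) coordinates.
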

\begin{proof}
    These are straightforward consequences of the definitions.
\end{proof}

\subsection{\texorpdfstring{Non-vanishing criterion for ordered pairs of extended $\Z$-segments}{}}

Given two extended $\Z$-segments, we define when they form a non-vanishing pair motivated by \cite[Lemmas 5.5, 5.6, 5.7]{Xu21b}. This notation will be used to detect the non-vanishing of certain representations in local Arthur packets in Theorem \ref{thm non-vanishing}. 

\begin{defn}\label{def non-vanishing}
Let $(\ee_1, \ldots, \ee_n)$ be a sequence of virtual extended $\Z$-segments and write $\mfr{e}_{i}= ([A_i,B_i],l_i, \eta_i)$.
\begin{enumerate}
    \item   We say the sequence $(\ee_1, \ldots, \ee_n)$ or $(\Supp(\ee_1),\ldots, \Supp(\ee_n))$ is admissible if
\begin{align*}
    \tag{P}  A_i< A_j, B_i< B_j\Longrightarrow i<j.  
\end{align*}
    \item Suppose $(\ee_1, \ee_2)$ is admissible.  Set $b_i:=A_i-B_i+1$ and $\epsilon:= (-1)^{A_1-B_1} \eta_1 \eta_2$ by taking any lift $\eta_i \in \{\pm 1\}$. We say that the ordered pair $(\mfr{e}_1, \mfr{e}_2)$ satisfies the non-vanishing criterion, and write $\NV(\ee_1, \ee_2) \neq 0$, if $\ee_1, \ee_2 \in \Eseg$ and the following conditions hold.
    \begin{enumerate}
    \item [(a)] If $A_1 \leq A_2$ and $B_1 \leq B_2$, then
    \[ \begin{cases}
        \epsilon=1 &\Rightarrow\,\,\, B_1+l_1 \leq B_{2}+l_{2}, \ A_1-l_1 \leq A_{2}-l_{2};\\
        \epsilon=-1 &\Rightarrow\,\,\,  A_1-l_1 < B_{2}+l_{2}.
        \end{cases} \]

         \item [(b)] If $A_1 \leq A_2$ and $B_1 \geq B_2$ $(i.e., [A_1,B_1] \subseteq [A_2,B_2])$, then
        \[ \begin{cases}
        \epsilon=1 &\Rightarrow\,\,\, 0 \leq l_{2} -l_{1} \leq b_{2}-b_{1};\\
        \epsilon=-1 &\Rightarrow \,\,\, l_1+l_{2} \geq b_{1}.   \end{cases} \]

        \item  [(c)] If $ A_1 \geq A_2$ and $B_1 \leq B_2$ $(i.e., [A_1,B_1]\supseteq [A_2,B_2])$, then
        \[ \begin{cases}
        \epsilon=1 &\Rightarrow\,\,\, 0 \leq l_{1} -l_{2} \leq b_{1}-b_{2};\\
        \epsilon=-1 &\Rightarrow\,\,\,  l_1+l_{2} \geq b_{2}.    \end{cases} \]

\end{enumerate}
Otherwise, we write $\NV(\ee_1,\ee_2)=0$.
\item Suppose $(\ee_1,\ee_2)$ is admissible and $\Supp(\ee_i)= [A_i, B_i]$. We write $\ee_1 \preceq \ee_2$ or $\Supp(\ee_1) \preceq \Supp(\ee_2)$ if 
\begin{itemize}
    \item  $[A_1,B_1] \subseteq [A_2,B_2]$; or, 
    \item $[A_1,B_1] \not\supseteq [A_2,B_2]$, $[A_1,B_1] \not\subseteq [A_2,B_2]$, and $B_1<B_2.$
\end{itemize}
\end{enumerate}
\end{defn}

\begin{remark}\label{rmk non-vanishing}\
\begin{enumerate}
    \item One can check that the above Conditions (a), (b) and (c) are independent of the choices of the lifting of $\eta_i$ in $\{\pm 1\}$.
    \item If $A_1=A_2$, then Conditions (a) and (b) are equivalent. If $[A_1,B_1]=[A_2,B_2]$, then (a), (b) and (c) are all equivalent. Moreover, in this case $\NV(\ee_1, \ee_2) \neq 0$ if and only if $l_1=l_2$ and there is a lifting of $\eta_i \in \{\pm 1\}$ such that $\epsilon=1$. Thus for an extended $\Z$-segment $\ee$, we shall denote $\ee^{\dagger}$ the unique extended $\Z$-segment such that $\Supp(\ee)= \Supp(\ee^{\dagger}    )$ and    
    $\NV(\ee, \ee^{\dagger})\neq 0$.
\end{enumerate}
\end{remark}

Fixing an extended $\Z$-segment $\ee$, we define the set of extended $\Z$-segments which form a non-vanishing pair with $\ee$.

\begin{defn}
    Let $\ee=([A,B],l,\eta)$ be an extended $\Z$-segment and $\Delta'=[A',B']$ be a $\Z$-segment. We define
    \begin{align*}
        \NV_{(\ee,-)}(\Delta')&:= \{ \ee' \in \Eseg_{\Delta'} \ | \ \NV(\ee,\ee') \neq 0  \},\\
        \NV_{(-,\ee)}(\Delta')&:= \{ \ee' \in \Eseg_{\Delta'} \ | \ \NV(\ee',\ee) \neq 0  \}.
    \end{align*}
\end{defn}
Note that the set $\NV_{(\ee,-)}(\Delta')$ is empty if the sequence $(\Supp(\ee), \Delta')$ is not admissible. Conversely, if $(\Supp(\ee), \Delta')$ is admissible, then $\NV_{(\ee,-)}(\Delta')$ is non-empty. We list properties of the set $\NV_{(\ee,-)}(\Delta')$ in the following lemma.

\begin{lemma}\label{lem adj} Let $\ee_1 ,\ee_2 \in \Eseg$ such that $(\ee_1,\ee_2)$ is admissible. Write $\Supp(\ee_i)=\Delta_i$.
\begin{enumerate}
    \item [(i)] The sets $\NV_{(\ee_1,-)}(\Delta_2)$ and $\NV_{(-,\ee_2)}(\Delta_1)$ are non-empty intervals.
    \item [(ii)] If $\ee_1$ is adjacent to $\ee_1'$, then $\NV_{(\ee_1,-)}(\Delta_2)$ is adjacent to $\NV_{(\ee_1',-)}(\Delta_2)$. Similarly, if $\ee_2$ is adjacent to $\ee_2'$, then $\NV_{(-,\ee_2)}(\Delta_1)$ is adjacent to $\NV_{(-,\ee_2')}(\Delta_1)$.
    \item [(iii)] Suppose $\NV(\ee_1, \ee_2) \neq 0$ and $\ee_1 \preceq \ee_2$. Then $|\NV_{(\ee_1, -)}(\Delta_2)|=1$ if and only if $\Delta_1 = \Delta_2$. Similarly, suppose $\NV(\ee_1, \ee_2) \neq 0$ and $\ee_2 \preceq \ee_1$. Then $|\NV_{( -,\ee_2)}(\Delta_1)|=1$ if and only if $\Delta_1 = \Delta_2$. 
\end{enumerate}
\end{lemma}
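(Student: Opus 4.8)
The plan is to work entirely with the combinatorial picture described in the remark after Definition \ref{def adjacent}: fix the $\mathbb{Z}$-segment $\Delta_2 = [A_2, B_2]$ and the order-preserving embedding $\iota\colon \Eseg_{\Delta_2} \hookrightarrow \{1, \ldots, A_2 - B_2 + 2\}$, so that ``interval'' and ``adjacent'' acquire their literal meaning. For part (i), I would first reduce to showing that $\NV_{(\ee_1,-)}(\Delta_2)$ is an interval — the statement for $\NV_{(-,\ee_2)}(\Delta_1)$ is symmetric, obtained by reversing the roles (note that $\NV(\ee',\ee) \neq 0$ is governed by Conditions (a)--(c) with the pair in the opposite order, and one checks the admissibility condition (P) is symmetric under this reversal in the relevant sense). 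The core computation is to fix $\ee_1 = ([A_1,B_1], l_1, \eta_1)$ and, for each of the three geometric cases (a) $[A_1,B_1]$ and $[A_2,B_2]$ genuinely overlap with $A_1 \le A_2$, $B_1 \le B_2$; (b) $[A_1,B_1] \subseteq [A_2,B_2]$; (c) $[A_1,B_1] \supseteq [A_2,B_2]$ — solve the inequalities defining $\NV(\ee_1,\ee_2) \neq 0$ for the pair $(l_2, \eta_2)$. In each case the conditions cut out, on the totally ordered set $\Eseg_{\Delta_2}$, a set of the form $\{\ee : l_2 \in [\alpha,\beta], \text{parity/sign constraint}\}$, and the point is that under the chosen total order (which interleaves the $\eta = +1$ chain with the reversed $\eta = -1$ chain, gluing them at $l = b/2$ when $b$ is even) such a set is always a contiguous block. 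Non-emptiness when $(\Supp(\ee_1), \Delta_2)$ is admissible follows because, e.g., taking $l_2$ extremal always satisfies the $\epsilon = 1$ branch.

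For part (ii), I would use part (i) together with part (b) of Lemma \ref{lem key observation} and a continuity/monotonicity observation: adjacency of $\ee_1, \ee_1'$ means $\iota_{\Delta_1}(\ee_1)$ and $\iota_{\Delta_1}(\ee_1')$ differ by $1$, and I claim the assignment $\ee_1 \mapsto \NV_{(\ee_1,-)}(\Delta_2)$ is, in the appropriate sense, ``Lipschitz with constant one'' for the endpoints of the output interval. Concretely, from the explicit description of the endpoints found in part (i) as functions of $l_1$ (and the sign $\eta_1$), a step of size $1$ in $\ee_1$ along its total order moves each endpoint of $\NV_{(\ee_1,-)}(\Delta_2)$ by at most $1$; combined with the fact that both are genuine non-empty intervals, this forces $\NV_{(\ee_1,-)}(\Delta_2)$ and $\NV_{(\ee_1',-)}(\Delta_2)$ to be adjacent in the sense of Definition \ref{def adjacent}(d) — one exhibits the virtual intervals $S_i^+$ by extending into $\VEseg_{\Delta_2}$ if an endpoint would otherwise fall off the edge. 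It may be cleanest to verify this via the virtual extended $\mathbb{Z}$-segments directly, extending $\NV$ formally to $\VEseg$ so that the endpoint formulas become honest affine functions with no boundary casework, then intersect back with $\Eseg$ at the end.

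For part (iii): assume $\NV(\ee_1, \ee_2) \neq 0$ and $\ee_1 \preceq \ee_2$. If $\Delta_1 = \Delta_2$ then by Remark \ref{rmk non-vanishing}(2) the condition $\NV(\ee_1, \ee') \neq 0$ for $\ee' \in \Eseg_{\Delta_1}$ forces $\ee' = \ee_1^{\dagger}$, a single element, so $|\NV_{(\ee_1,-)}(\Delta_2)| = 1$. Conversely, if $\Delta_1 \subsetneq \Delta_2$ (the case $[A_1,B_1] \subseteq [A_2,B_2]$ with strict containment) or the ``genuine overlap with $B_1 < B_2$'' case holds, I would revisit the solution set from part (i): in these cases the interval $[\alpha,\beta]$ of admissible $l_2$-values, solved from Conditions (a) or (b) with $\ee_1$ fixed, has $\beta - \alpha \geq 1$ (the slack $b_2 - b_1 \geq 1$ in Condition (b), or the analogous margin in Condition (a), produces at least two lattice points after accounting for the sign constraint), so $|\NV_{(\ee_1,-)}(\Delta_2)| \geq 2$. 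The symmetric statement for $\NV_{(-,\ee_2)}(\Delta_1)$ under $\ee_2 \preceq \ee_1$ follows by the same reversal used in part (i).

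The main obstacle I anticipate is part (i), specifically the bookkeeping in Condition (a) (the ``genuine overlap'' case): there the constraints involve $B_1 + l_1 \le B_2 + l_2$ and $A_1 - l_1 \le A_2 - l_2$ simultaneously in the $\epsilon = 1$ branch and a single strict inequality $A_1 - l_1 < B_2 + l_2$ in the $\epsilon = -1$ branch, and one has to check carefully that the union over the two sign branches still assembles into one contiguous block under the glued total order — in particular that the top of the $\epsilon = -1$ (i.e.\ $\eta_2 = -\eta_1$-flavored) block and the bottom of the $\epsilon = +1$ block abut rather than overlap or leave a gap. Getting the inequalities at the gluing point $l_2 = b_2/2$ exactly right, for both parities of $b_2$, is where the real care is needed; everything else is routine once that is pinned down.
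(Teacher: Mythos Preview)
Your plan matches the paper's approach closely: explicitly describe $\NV_{(\ee_1,-)}(\Delta_2)$ as $S_1 \cup S_2$ (the $\epsilon = +1$ and $\epsilon = -1$ branches) in each of Cases (a), (b), (c), then check the union is a single contiguous block by analyzing the gluing at $l_2 = \lfloor b_2/2 \rfloor$. Your identification of Case~(a) as the delicate point is exactly right, and the paper handles it just as you propose. For part~(ii) the paper also omits details, saying only that it follows from the explicit endpoint formulas by a case-by-case computation; your ``Lipschitz-$1$ on the endpoints'' framing is a clean way to organize that.

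Where you underestimate the work is part~(iii), Case~(a). The heuristic ``the slack produces $\beta - \alpha \geq 1$, so at least two lattice points'' does not go through directly, because $S_1$ and $S_2$ are separate intervals (with different sign constraints) and either one can be empty or a singleton on its own. The paper's proof has to split into several sub-situations: $|S_2|=0$ with $|S_1|=1$ (itself splitting into three further cases depending on which of the bounds $B_1 - B_2 + l_1$, $A_2 - A_1 + l_1$, $\lfloor b_2/2 \rfloor$ is binding); $|S_1|=0$ with $|S_2|=1$; and the degenerate overlap $|S_1| = |S_1 \cup S_2| = |S_2| = 1$ when $b_2$ is even. In each sub-case one manipulates the defining inequalities to force $A_1 + B_1 = A_2 + B_2$ (hence $\Delta_1 = \Delta_2$), and one of the sub-situations ($B_1 - B_2 + l_1 = x$ with $A_2 - A_1 + l_1 > x$) is shown to be vacuous by contradicting the assumption $|S_2|=0$. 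Non-emptiness of $\NV_{(\ee_1,-)}(\Delta_2)$ is also verified along the way in these cases rather than by a single stroke. So your plan is correct, but budget more casework for (iii) than your sketch suggests.
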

\begin{proof}
We are going to write down the set $\NV_{(\mfr{e}_1,-)}(\Delta_2)$ explicitly. Then we verify half of Part (i) that $\NV_{(\ee_1,-)}(\Delta_2)$ is an interval. Parts (ii) and (iii) will also be a direct consequence of the computation. We omit the analogous verification of these statements for $\NV_{(-,\ee_2)}(\Delta_1)$.

 Write $\ee_1= ([A_1,B_1], l_1, \eta_1)$ and $\Delta_2=[A_2, B_2]$. Note that we have assumed $A_1,A_2 \in  \Z$ in the definition of extended $\Z$-segments. We consider the three cases that satisfy Conditions (a), (b) or (c) in Definition \ref{def non-vanishing}(2) respectively.

\textbf{Case (a):}  $A_1 \leq A_2$ and $B_1\leq B_2$.

In this case, $ \NV_{(\mfr{e_1},-)}(\Delta_2)= S_1 \cup S_2$, where
\begin{align*}
   S_1&= \{ ([A_2,B_2], l_2, (-1)^{A_1-B_1}\eta_1) \in \Eseg\ | \ B_1-B_2+l_1 \leq l_2 \leq A_2-A_1+l_1 \},\\
   S_2&= \{ ([A_2,B_2], l_2, (-1)^{A_1-B_1+1}\eta_1) \in \Eseg \ | \ A_1-B_2-l_1 < l_2\}.
\end{align*}
Recall that there is a natural restriction that $0 \leq l_2\leq \half{A_2-B_2+1}$ in order that the triple is an extended $\Z$-segment. It is clear that both $S_1$ and $S_2$ are intervals. To show that the union is also an interval, we assume both $S_1$ and $S_2$ are non-empty. Since $S_2$ is non-empty, there exists an $l_2$ such that 
\[ A_1-B_2-l_1 < l_2 \leq \half{A_2-B_2+1}. \]
 Therefore, 
 \begin{align}\label{eq Case (a) 0}
     A_2- A_1 +l_1  > \half{A_2-B_2+1}-1. 
 \end{align}
Thus, if $b_2:= A_2-B_2+1= 2x+1$ is odd, then (we have assumed that $S_1$ is non-empty) 
 \[ (\Delta_2, x, (-1)^{A_1-B_1}\eta_1) \in S_1,\ \  (\Delta_2, x, (-1)^{A_1-B_1+1}\eta_1) \in S_2.  \]
If $b'= 2x$ is even, then 
 \[ (\Delta_2, x-1, (-1)^{A_1-B_1}\eta_1) \in S_1,\ \ (\Delta_2, x, (-1)^{A_1-B_1}\eta_1)= (\Delta_2, x, (-1)^{A_1-B_1+1}\eta_1) \in S_2.  \]
 In any case, this implies that $S_1 \cup S_2$ is an interval. Also, from the explicit description of $S_1$ and $S_2$,  a case-by-case straightforward computation implies Part (ii), which we omit.

Finally, we check Part (iii). Note that in this case, we have $\ee_1 \preceq \ee_2$ unless $ A_1=A_2$. We leave the case that $A_1=A_2$ to Case (c). Thus, in the following discussion we assume that $A_1 < A_2$, and we are going to verify that $|\NV_{(\ee_1,-)}(\Delta_2)|=1$ if and only if $\Delta_1=\Delta_2$. One direction is already commented in Remark \ref{rmk non-vanishing}(2). We also verify that $\NV_{(\ee_1,-)}(\Delta_2)$ is non-empty along the way.

{
If $|S_2|=0$, then 
\[A_1- B_2-l_1 \geq \half{A_2-B_2+1},\]
which implies that 
\begin{align*}
    A_2- A_1 + l_1 \leq  \half{A_2-B_2+1}-1.
\end{align*}
On the other hand, since $B_1 -B_2 \leq 0$, $2l_1 \leq A_1-B_1+1 $ and $A_1 \leq A_2$, we obtain
\[(B_1-B_2)+ B_1 +2 l_1 \leq B_1 +2 l_1 \leq A_1+1 \leq A_2+1,\]
which implies that 
\[B_1-B_2+ l_1 \leq \half{A_2 -B_2+1}. \]
Therefore, we conclude that $S_1$ is non-empty if $S_2$ is empty. Suppose further that $|S_1|=1$ (and $|S_2|=0$). Then one of the following situations holds. Here we write $A_2-B_2+1=2x$ or $A_2-B_2+1=2x+1$.
\begin{itemize}
     \item [(1)] $B_1-B_2+l_1 < 0$ and $A_2-A_1+l_1=0.$
   
    \item [(2)] $0 \leq B_1-B_2+l_1= A_2-A_1+l_1 \leq x$.
    
     \item [(3)] $B_1-B_2+l_1 = x$ and $A_2-A_1+l_1>x$.
\end{itemize}
In situation (1), we have $A_1=A_2$ and $l_1=0$, which is the case we defer to Case (c). In situation (2), we have $A_1+B_1=A_2+B_2$, which implies that $\Delta_1=\Delta_2$ since $A_1 \leq A_2$ and $B_1\leq B_2$. Finally, situation (3) is indeed empty since the conditions that $|S_2|=0$ and $A_2-A_1+l_1>x$ contradict \eqref{eq Case (a) 0}.

Next, we consider the case that $|S_2|=1$ and $|S_1|=0$. Note that since  $A_1 \leq A_2$, $B_1\leq B_2$ and $l_1 \geq 0$, we have 
\[\begin{cases}
    B_1 - B_2 + l_1 \leq A_2-A_1+l_1, \text{ and }\\
    A_2-A_1+l_1 \geq 0.
\end{cases}  \]
Thus, there always exists an $l_2 \in \Z_{\geq 0}$ such that $ B_1-B_2+l_1 \leq l_2 \leq A_2-A_1+l_1$ holds. As a consequence, the set $S_1$ is empty only if this $l_2$ violates the natural restriction $l_2 \leq \half{A_2-B_2+1}$. In other words, if $|S_1|=0$, then
\begin{align}\label{eq Case (a) 1}
    B_1-B_2+l_1> \half{A_2-B_2+1}.
\end{align}
On the other hand, $|S_2|=1$ only if 
\begin{align}\label{eq Case (a) 2}
    A_1-B_2-l_1= \lfloor \half{A_2-B_2+1} \rfloor -1.
\end{align}
Putting \eqref{eq Case (a) 1} and \eqref{eq Case (a) 2} together, we obtain that
\[A_1-B_1+1 -2l_1= (A_1-B_2-l_1)- (B_1-B_2+l_1)+1 <\lfloor \half{A_2-B_2+1} \rfloor 
 -  \half{A_2-B_2+1} \leq 0. \]
Since $l_1 \leq \half{A_1-B_1+1}$, the above inequality implies that $A_2-B_2+1$ is even and $l_1=\half{A_1-B_1+1}$. Then \eqref{eq Case (a) 2} becomes
\[ A_1- B_2- \half{A_1-B_1+1}= \half{A_2-B_2+1} -1. \] 
Hence, $ A_1+B_1= A_2+B_2$. The assumption that $A_1 \leq A_2$, $B_1\leq B_2$ again implies that $\Delta_1=\Delta_2$.

Finally, we consider the case that $|S_1|=|S_1\cup S_2|= |S_2|=1$. In this case, it is necessary that $A_2-B_2+1 =2x$ is even and 
\[ S_1= \{([A_2,B_2], x, (-1)^{A_1-B_1}\eta_1)\}=\{([A_2,B_2], x,  (-1)^{A_1-B_1+1}\eta_1) \}= S_2. \]
By definition, we obtain that
\[\begin{cases}
    B_1-B_2+l_1= x,\\
    A_1-B_2-l_1= x-1.
\end{cases}\]
Therefore,
\begin{align*}
    A_1+B_1= B_1+B_2+l_1+x-1= B_1+B_2+ (x-B_1+B_2) +x-1= 2B_2+2x-1=A_2+B_2.
\end{align*}
Since $A_1 \leq A_2$ and $B_1 \leq B_2$, we conclude that $ \Delta_1= \Delta_2$. This completes the verification of Part (iii) in this case.
}

\textbf{Case (b):} $A_1 \leq A_2$ and $B_1\geq B_2$, i.e. $[A_1,B_1]\subseteq [A_2, B_2]$.

In this case, $ \NV_{(\mfr{e}_1,-)}(\Delta)= S_1 \cup S_2$, where
\begin{align*}
   S_1&= \{ ([A_2,B_2], l_2, (-1)^{A_1-B_1}\eta_1) \in \Eseg\ | \  l_1 \leq l_2 \leq b_2- b_1+ l_1 \},\\
   S_2&= \{ ([A_2,B_2], l_2, (-1)^{A_1-B_1+1}\eta_1) \in \Eseg \ | \ b_1-l_1 \leq l_2 \}.
\end{align*}
Again, $S_1, S_2$ are both intervals if non-empty, and we show that $S_1 \cup S_2$ is an interval assuming that $S_1$ and $S_2$ are both non-empty. Since $S_1$ is non-empty, we have $b_1 -l_1 \leq \half{b_2}$, which implies that $    b_2-b_1+ l_1 \geq \half{b_2}.$
Thus, the same argument in Case (a) implies that $S_1 \cup S_2$ is an interval. This proves Part (i) in this case. Part (ii) follows from a case-by-case straightforward computation, which we also omit.

We explain Part (iii) now. In this case, we have $\ee_1 \preceq \ee_2$, and we are going to show that $|S_1 \cup S_2|=1$ if and only if $\Delta_1=\Delta_2$. First, the assumption that $[A_1, B_1] \subseteq [A_2,B_2]$ implies that $b_1 \leq b_2$. Thus, we have
\begin{align}\label{eq lem adj case(b)}
l_1 \leq \half{b_1} \leq \half{b_2},
\end{align}
and $S_1$ always contains $([A_2,B_2], l_1, (-1)^{A_1-B_1}\eta_1)$. In order that $S_1$ itself is a singleton, we have either 
\begin{enumerate}
    \item [(i)]$l_1= b_2-b_1+l_1$, or
    \item [(ii)]$l_1 < b_2-b_1+l_1$ and $l_1= \lfloor \half{b_2} \rfloor$.
\end{enumerate}
In situation (i), we obtain $b_1=b_2$, which implies that $\Delta_1 \subseteq \Delta_2$ is indeed an equality. In situation (ii), \eqref{eq lem adj case(b)} implies that we must have $b_1= 2l_1$ and $b_2=b_1+1$ is odd. Then $S_2$ contains $([A_2,B_2], l_1, (-1)^{A_1-B_1+1}\eta_1)$ and $|S_1 \cup S_2|\geq 2$. This completes the verification of Part (iii) in this case.

\textbf{Case (c):} $A_1 \geq A_2$ and $B_1\leq B_2$, i.e. $[A_1,B_1]\supseteq [A_2, B_2]$. 

In this case, $ \NV_{(\mfr{e}_1,-)}(\Delta)= S_1 \cup S_2$, where
\begin{align*}
   S_1&= \{ ([A_2,B_2], l_2, (-1)^{A_1-B_1}\eta_1) \in \Eseg\ | \   b_2- b_1+l_1 \leq l_2 \leq l_1 \},\\
   S_2&= \{ ([A_2,B_2], l_2, (-1)^{A_1-B_1+1}\eta_1) \in \Eseg \ | \ b_2 -l_1 \leq l_2\}.
\end{align*}
We omit the rest of the verification, which is similar to Case (b). This completes the proof of the lemma.
\end{proof}

\subsection{Row exchange operator}

In this subsection, we define a row exchange operator on ordered pairs of extended $\Z$-segments. This is related to the row exchange operator on extended multi-segments which parameterize representations in local Arthur packets (\cite[\S6]{Xu21b}).

\begin{defn}  \cite[Section 4.2]{Ato20b} \label{def row exchange} 
Suppose that $\ee_i= ([A_i,B_i],l_i,\eta_i) \in \VEseg$, $i=1,2$, satisfies that $[A_1,B_1] \supseteq [A_2,B_2]$ or $[A_1,B_1] \subseteq [A_2,B_2]$. We define
\[ R(\ee_1,\ee_2):= (\ee_2', \ee_1'),\]
where $\ee_i'= ([A_i,B_i], l_i',\eta_i')$ is given as follows: Set $b_i:= A_i-B_i+1$ and $\epsilon:= (-1)^{A_1-B_1} \eta_1 \eta_2$ for any fixed lifting $\eta_i \in \{\pm 1\}$.
\begin{enumerate}
  
    \item [\textbf{Case 1.}] $ [A_1,B_1] \subseteq [A_{2},B_{2}]$:
        In this case, we set $(l_{1}',\eta_{1}')=(l_{1}, (-1)^{A_{2}-B_{2}}\eta_{1})$, and
    \begin{enumerate}
   \item [(a)] If $\epsilon=1$ and $b_{2}- 2l_{2} < 2(b_{1}-2l_{1})$, then
    \[ (l_{2}', \eta_{2}')= (b_{2}-(l_{2}+ (b_{1}-2l_{1})), (-1)^{A_{1}-B_{1}} \eta_{2}).  \]
    \item [(b)] If $\epsilon=1$ and $b_{2}- 2l_{2} \geq  2(b_{1}-2l_{1})$,
    then
    \[ (l_{2}', \eta_{2}')= (l_{2}+ (b_{1}-2l_{1}), (-1)^{A_{1}-B_{1}+1} \eta_{2}).  \]
    \item [(c)] If $\epsilon=-1$, then
    \[ (l_{2}', \eta_{2}')= (l_{2}- (b_{1}-2l_{1}), (-1)^{A_{1}-B_{1}+1} \eta_{2}).  \]
\end{enumerate}
  \item [\textbf{Case 2.}] $ [A_1,B_1] \supsetneq [A_{2},B_{2}]$:
        In this case, we set $(l_{2}',\eta_{2}')=(l_{2}, (-1)^{A_1-B_1}\eta_{2})$, and
    \begin{enumerate}
    \item [(a)] If $\epsilon=1$ and $b_1- 2l_1 < 2(b_{2}-2l_{2})$, then
    \[ (l_1', \eta_{1}')= (b_1-(l_1+ (b_{2}-2l_{2})), (-1)^{A_{2}-B_{2}} \eta_1).  \]
    \item [(b)] If $\epsilon=1$ and $b_1- 2l_1 \geq  2(b_{2}-2l_{2})$, then
    \[ (l_{1}', \eta_{1}')= (l_1+ (b_{2}-2l_{2}), (-1)^{A_{2}-B_{2}+1} \eta_1).  \]
    \item [(c)] If $\epsilon=-1$, then
    \[ (l_{1}', \eta_{1}')= (l_1- (b_{2}-2l_{2}), (-1)^{A_{2}-B_{2}+1} \eta_1).  \]
\end{enumerate}
\end{enumerate}
\end{defn}

\begin{remark}\label{rmk row exchange}
Suppose that $R(\ee_1,\ee_2)= (\ee_2', \ee_1')$.
\begin{enumerate}
    \item One can check that $\ee_2'$ and $\ee_1'$ are still virtual extended $\Z$-segments. Namely, if $\ee_i'= ([A_i,B_i],l_i',\eta_i')$, then $l_i' \leq \half{b_i} $.
    \item Suppose that $\ee_1, \ee_2 \in \Eseg$ and
    $\NV(\ee_1, \ee_2) \neq 0$. Then the inequalities in Parts (b) and (c) of Definition \ref{def non-vanishing}(2) guarantee that $\ee_1', \ee_2'$ are still extended $\Z$-segments. For example, in Case 1 and $\epsilon=-1$, the non-vanishing criterion gives $l_1 +l_2 \geq b_1$. Hence, 
\[ l_2 -(b_1-2l_1)= l_1+ (l_1+l_2 -b_1) \geq l_1 \geq 0. \]
The other cases can be verified similarly and we omit the details.
\item The following facts can be verified by direct computations:
\begin{itemize}
    \item If $\NV(\ee_1,\ee_2) \neq 0,$ then $\NV(\ee_2', \ee_1' ) \neq 0$.
    \item $R(\ee_2',\ee_1')= (\ee_1,\ee_2)$.
\end{itemize}
\end{enumerate}
\end{remark}

We illustrate the special case that $l_2=0$ in Case 2. Similar discussion applies to Case 1 when $l_1=0$.
\begin{exmp}\label{exmp l=0 row exchange}
    Consider $\ee_1=([A_1,B_1],l_1,\eta_1)$ and $\ee_2^k= ([B_1+k, B_1],0, \eta_2)$ for $1 \leq k \leq K_{\epsilon}$ where
    \[ K_{\epsilon}= \begin{cases}
        b_2-l_2 & \text{ if }\epsilon=(-1)^{A_1-B_1}\eta_1\eta_2=1,\\
        l_1 & \text{ if }\epsilon=-1.
    \end{cases}\]
     Then $\NV(\ee_1,\ee_2^k)\neq 0$. Write $R(\ee_1,\ee_2^k)= ((\ee_2^k)^{\ast}, \ee_1^k)$. 
    
    When $k=1$, we have  $\ee_1^1= ([A_1,B_1],l_1^1,\eta_1^1)$
    \[(l_1^1,\eta_1^1)=\begin{cases}
        (l_1+1, -\eta_1) & \text{ if }\varepsilon=1 \text{ and }b_1-2l_1> 1, \\
        (b_1-l_1, \eta_1) & \text{ if } \varepsilon=1 \text{ and }b_1-2l_1\leq  1,\\
        (l_1-1, -\eta_1)  & \text{ otherwise.}
    \end{cases}\]
   In other words, the resulting $\Z$-extended segment $\ee_1^1$ is adjacent to $\ee_1^0=([A_1,B_1],l_1, -\eta_1)$, and the sign $\epsilon$ determines the direction.

   For general $k$, a direct computation shows that $\{\ee_1^0, \ee_1^1, \ldots, \ee_1^k \}$ is an interval of length $k+1$. An end point of this interval is $\ee_1^0$, and $\epsilon$ determines it is the left or right end point. The upper bound $K_{\epsilon}$ above exactly reflects the maximal length of intervals with $\ee_1^0$ as the left or right end point.

   From the above observation, it is not hard to check that for $1 \leq k' < k \leq K_{\epsilon}$, we may break $\ee_2^k$ into two pieces: $\ee_2^{k'}$ and
   \[ (\ee_2^{k'})^c:= ([ B_2+k, B_2+k'+1],0, (-1)^{k'}\eta_2).\]
   Then write
   \[ R_{2} \circ R_1 ( \ee_1, \ee_2^{k'}, (\ee_2^{k'})^c )= R_2((\ee_{2}^{k'})^{\ast}, \ee_1^{k'}, (\ee_2^{k'})^c )=((\ee_{2}^{k'})^{\ast}, ((\ee_2^{k'})^c)^{\ast}, (\ee_1^k)'). \]
   Then we have $(\ee_1^k)'= \ee_1^k$.   
\end{exmp}

Here are some implications of the definition above. 

\begin{lemma}\label{lem observation row exchange} 
Suppose that $\ee_1, \ee_2 \in \VEseg$ and
$R(\ee_1, \ee_2)=  (\widetilde{\ee_2}, \widetilde{\ee_1})$ and $R  (\ee_1', \ee_2)=(\widetilde{\ee_2}', \widetilde{\ee_1}')$.
\begin{enumerate}
    \item If $\ee_1 \neq \ee_1'$, then $ \widetilde{\ee_1} \neq \widetilde{\ee_1}'$.
    \item If $\ee_1, \ee_1'$ are adjacent, then $\widetilde{\ee_1}$ and $\widetilde{\ee_1}'$ are adjacent.
\end{enumerate}
In other words, $R(-, \ee_2): \ee_1 \mapsto \widetilde{\ee_1}'$ is a bijection on $\VEseg_{\Supp(\ee_1)}$ sending virtual intervals to virtual intervals. The same statement holds for $R(\ee_1, -)$.
\end{lemma}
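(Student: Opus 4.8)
The statement is about the row exchange operator $R(-,\ee_2)$ acting on virtual extended $\Z$-segments with fixed support, and it has two parts: (1) injectivity of $\ee_1 \mapsto \widetilde{\ee_1}$, and (2) that adjacency is preserved. The conclusion that the map sends virtual intervals to virtual intervals then follows formally from (1), (2), and the definition of a virtual interval as a sequence of pairwise-distinct, consecutively-adjacent virtual extended $\Z$-segments. So the real content is (1) and (2), which I would prove by a direct case analysis following the two cases of Definition \ref{def row exchange}.

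\textbf{Setup and reduction.} Fix $\ee_2 = ([A_2,B_2],l_2,\eta_2)$ and vary $\ee_1 = ([A_1,B_1],l_1,\eta_1)$ over $\VEseg_{[A_1,B_1]}$, where $[A_1,B_1]$ is a fixed $\Z$-segment satisfying $[A_1,B_1]\subseteq[A_2,B_2]$ or $[A_1,B_1]\supsetneq[A_2,B_2]$ (otherwise $R$ is not defined and there is nothing to prove). I will treat Case 1 ($[A_1,B_1]\subseteq[A_2,B_2]$) in detail; Case 2 is symmetric with the roles of the two segments interchanged. In Case 1, write $R(\ee_1,\ee_2) = (\widetilde{\ee_2}, \widetilde{\ee_1})$ with $\widetilde{\ee_1} = ([A_1,B_1], l_1, (-1)^{A_2-B_2}\eta_1)$. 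The key observation is that the first component of the output, $\widetilde{\ee_1}$, has support $[A_1,B_1]$, keeps $l_1' = l_1$ unchanged, and sets $\eta_1' = (-1)^{A_2-B_2}\eta_1$ — where the sign twist $(-1)^{A_2-B_2}$ depends only on the \emph{fixed} data $\ee_2$, not on $\ee_1$. Hence on the $\ee_1$-slot the map $R(-,\ee_2)$ is, up to the fixed sign twist of $\eta$, literally the identity on $(l_1,\eta_1)$ (being careful that the equivalence relation $E$ on $\eta$ collapses the $\pm 1$ ambiguity exactly when $b_1 = 2l_1$, consistently before and after the twist, since $l_1$ is unchanged).

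\textbf{Carrying out (1) and (2).} Given this, Part (1) is immediate in Case 1: if $\ee_1 \neq \ee_1'$ with the same support, then $(l_1,\eta_1) \neq (l_1',\eta_1')$ as elements of the appropriate quotient, and applying the fixed bijection $(l,\eta)\mapsto(l, (-1)^{A_2-B_2}\eta)$ preserves this inequality, so $\widetilde{\ee_1} \neq \widetilde{\ee_1}'$. Part (2): by Definition \ref{def adjacent}(a), $\ee_1$ adjacent to $\ee_1'$ means either (i) there are lifts with $\eta_1 = \eta_1'$ and $|l_1 - l_1'| = 1$, or (ii) $b_1$ odd, $l_1 = l_1' = \frac{b_1-1}{2}$, and $\eta_1 = -\eta_1'$. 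In case (i), after the twist we still have $(-1)^{A_2-B_2}\eta_1 = (-1)^{A_2-B_2}\eta_1'$ and $|l_1 - l_1'| = 1$ unchanged, so $\widetilde{\ee_1}$ and $\widetilde{\ee_1}'$ are adjacent; in case (ii), $b_1, l_1, l_1'$ are unchanged and $\eta_1 = -\eta_1'$ becomes $(-1)^{A_2-B_2}\eta_1 = -(-1)^{A_2-B_2}\eta_1'$, again adjacent. For Case 2, the same reasoning applies to the \emph{second} output slot $\widetilde{\ee_1}$, whose formula in Case 2(a),(b),(c) modifies $(l_1,\eta_1)$ by $l_1 \mapsto l_1 \pm (b_2 - 2l_2)$ or $l_1 \mapsto b_1 - l_1 - (b_2-2l_2)$ together with a fixed sign twist, all depending only on the fixed $\ee_2$; here one must additionally check that within a single adjacency pair the three subcases (a),(b),(c) are triggered \emph{consistently} — i.e. the dichotomy $b_1 - 2l_1 \lessgtr 2(b_2-2l_2)$ and the value of $\epsilon$ depend on $\ee_1$ in a way compatible with adjacency — and then verify that each of the three affine maps $l_1 \mapsto l_1 \pm c$ and the reflection $l_1 \mapsto b_1 - l_1 - c$ preserves distinctness and the two adjacency conditions. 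Finally, once (1) and (2) are established, the statement that virtual intervals go to virtual intervals follows because a virtual interval $S = \{\ee_1^{(1)},\ldots,\ee_1^{(r)}\}$ with consecutive adjacencies maps to $\{\widetilde{\ee_1^{(1)}},\ldots,\widetilde{\ee_1^{(r)}}\}$ which are still pairwise distinct (by (1)) and consecutively adjacent (by (2)); surjectivity onto $\VEseg_{\Supp(\ee_1)}$ — needed to call it a bijection — follows from Remark \ref{rmk row exchange}(3), since $R(\widetilde{\ee_2},\widetilde{\ee_1}) = (\ee_1,\ee_2)$ gives the inverse.

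\textbf{Main obstacle.} The only genuinely delicate point is Case 2, where the reflection subcase (a), $l_1 \mapsto b_1 - (l_1 + (b_2 - 2l_2))$, is an orientation-reversing affine map on the $l_1$-coordinate, and where different values of $\ee_1$ in the \emph{same} adjacency pair could a priori fall into different subcases (a)/(b)/(c); I expect the bulk of the work to be checking that the subcase boundaries and the $\epsilon$-parity line up so that adjacency is never broken at a boundary, and that across a boundary the two formulas glue to still-adjacent outputs (analogous to the gluing argument in Lemma \ref{lem adj}, Case (a)). Everything else is a routine verification driven by the fact that the $\ee_1$-slot of $R(-,\ee_2)$ is an affine-linear reparametrization of $(l_1,\eta_1)$ with coefficients fixed by $\ee_2$.
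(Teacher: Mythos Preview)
Your proposal is correct and follows essentially the same route as the paper: Case 1 is immediate because the $\ee_1$-slot only undergoes a fixed sign twist, and Case 2 requires checking the boundary where an adjacent pair $(\ee_1,\ee_1')$ straddles the dichotomy between subcases (a)/(b) or crosses from $\epsilon=1$ to $\epsilon=-1$. The paper carries out exactly these boundary computations explicitly (for $\ee_1'=([A_1,B_1],l_1+1,\eta_1)$ at the $b_1-2l_1 = 2(b_2-2l_2)$ threshold, and for the odd-$b_1$ case $\ee_1'=([A_1,B_1],x,-\eta_1)$), confirming that the outputs remain adjacent; your identification of this as the ``main obstacle'' is accurate.
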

\begin{proof}
Part (1) is a direct consequence of the definition. We check Part (2).   If $\Supp(\ee_1) \subseteq \Supp(\ee_2)$, the conclusion is clear. Assume $\Supp(\ee_1) \supseteq \Supp(\ee_2)$ in the rest of the proof. Write $\ee_i= ([A_i,B_i],l_i, \eta_i )$ for $i=1,2$. 

    First, we consider the case that $ \ee_1'=([A_1,B_1],l_1+1, \eta_1 ) $ (relabeling $\ee_1$ and $\ee_1'$ if necessary). If $\epsilon:= (-1)^{A_1-B_1}\eta_1 \eta_2=-1$, or $\epsilon=1$ but the signs of $ b_1 -2 l_1 -(2 b_2 -2l_2)$ and $ b_1 -2 (l_1+1) -(2 b_2 -2l_2)$ are the same, then the desired conclusion follows from the formula in Case 2 in Definition \ref{def row exchange}. Thus we assume that 
    \[ \begin{cases}
        b_1-2l_1 \geq 2 (b_2 -2l_2),\\
        b_1-2l_1 -2 < 2 (b_2 -2l_2).
    \end{cases}\]
    If $b_1$ is even, then the first inequality must be an equality. Then
    \begin{align*}
        \widetilde{\ee_1}= ( [A_1,B_1], \half{b_1}, (-1)^{A_2-B_2+1}\eta_1),\ \         \widetilde{\ee_1}'= ( [A_1,B_1], \half{b_1}-1, (-1)^{A_2-B_2+1}\eta_1),
    \end{align*}
which are adjacent. If $b_1=2x+1$ is odd, then $b_1-2l_1= 2(b_2-2l_2)+1$. Hence, 
 \begin{align*}
        \widetilde{\ee_1}= ( [A_1,B_1], x, (-1)^{A_2-B_2+1}\eta_1),\ \         \widetilde{\ee_1}'= ( [A_1,B_1], x, (-1)^{A_2-B_2}\eta_1),
    \end{align*}
 which are again adjacent.

 Next, we deal with the case that $b_1=2x+1$ is odd and $\ee_1= ([A_1,B_1],x,\eta_1)$ and $\ee_1'= ([A_1,B_1],x,-\eta_1)$. Assume $\epsilon:=(-1)^{A_1-B_1}\eta_1\eta_2=-1$. If $b_2-2l_2=0 \leq 1= b_1-2l_1$, then
 \[ \widetilde{\ee_1}= ( [A_1,B_1], x, (-1)^{A_2-B_2+1}\eta_1), \ \ \widetilde{\ee_1}'=( [A_1,B_1], x, (-1)^{A_2-B_2}\eta_1),  \]
which are adjacent. If $ b_1 -2l_1=1 < b_2-2l_2, $ then
 \[ \widetilde{\ee_1}= ( [A_1,B_1], x -(b_2-2l_2), (-1)^{A_2-B_2+1}\eta_1), \ \ \widetilde{\ee_1}'=( [A_1,B_1], x+1-(b_2-2l_2), (-1)^{A_2-B_2+1}\eta_1),  \]
which are again adjacent. This completes the proof of the lemma.
\end{proof}
\subsection{\texorpdfstring{Non-vanishing criterion on admissible sequences of extended $\Z$-segments}{}}

In this subsection, we define a non-vanishing criterion on admissible sequences of extended $\Z$-segments.

\begin{defn}\label{def non-vanishing EEE}
    Let $\EEE=(\ee_1,\ldots, \ee_n)$ be an admissible sequence of virtual extended $\Z$-segments. 
    \begin{enumerate}
        \item [(1)] For $k=1,\ldots, n-1$, if $\Supp(\ee_k) \supseteq \Supp(\ee_{k+1})$ or $\Supp(\ee_{k+1}) \supseteq \Supp(\ee_{k})$, we define 
        \[ R_k(\EEE)= (\ee_{1},\ldots, \ee_{k-1}, \ee_{k+1}', \ee_{k}', \ee_{k+2},\ldots, \ee_n),\]
        where $R(\ee_{k},\ee_{k+1})= (\ee_{k+1}', \ee_{k}')$
        \item [(2)] We write $\EEE\stackrel{R}{=} \EEE'$ if 
        \[ \EEE' = R_{k_1} \circ \cdots \circ R_{k_s} (\EEE)\]
        for some $1 \leq k_i \leq n-1$. We let
        \[ [\EEE]:= \{\EEE' \ | \ \EEE' \stackrel{R}{=} \EEE\}.\]    
        \item [(3)] We write $ \widetilde{\NV}(\EEE) \neq 0$ if $\NV(\ee_{i}, \ee_{i+1})\neq 0$ for all $1 \leq i \leq n-1$ (see Definition \ref{def non-vanishing}).
        \item [(4)]  We say $\EEE$ satisfies the non-vanishing criterion, and write $\NV(\EEE) \neq 0$, if $\widetilde{\NV}(\EEE')\neq 0$ for all $ \EEE' \in [\EEE]$. Otherwise, we write $\NV(\EEE)=0$.
        \item [(5)] Write $\Supp(\ee_i)=[A_i, B_i]$. We say $\EEE$ satisfies $(P'')$ if 
\begin{align*}
    \tag{$P''$} i<j \Longrightarrow \begin{cases}
        B_i\leq B_j,\\
        A_i \geq A_j \text{ if }B_i=B_j.
    \end{cases}
\end{align*}
We define a non-empty subset $[\EEE]^{(P'')} \subseteq [\EEE]$ that consists of $\EEE'$ satisfying $(P'')$.
    \end{enumerate}
\end{defn}

 The following two statements will be proved in \S \ref{sec non-vanishing extended multi-segments}.
 
\begin{lemma}\label{lem P''}
If $\NV(\EEE)\neq 0$, then the set $[\EEE]^{(P'')}$ is a singleton. We denote the unique member by $\EEE^{(P'')}$.
\end{lemma}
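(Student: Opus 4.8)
The plan is to show that the row-exchange operators $R_k$ that are applicable while staying inside the configurations forbidden by $(P'')$ suffice to transport any $\EEE' \in [\EEE]$ to a member satisfying $(P'')$, and that the resulting member is unique. First I would record a normal-form argument for existence: since admissibility (property $(P)$) already constrains the relative positions of supports, a sequence fails $(P'')$ exactly when there is an index $k$ with either $B_k > B_{k+1}$, or $B_k = B_{k+1}$ and $A_k < A_{k+1}$; in both cases one of $\Supp(\ee_k) \supseteq \Supp(\ee_{k+1})$ or $\Supp(\ee_{k+1}) \supseteq \Supp(\ee_k)$ holds, so $R_k$ is defined at that position, and applying it strictly decreases a suitable monovariant (for instance the number of ``inversions'' $\#\{(i,j): i<j,\ B_i > B_j \text{ or } (B_i=B_j,\ A_i<A_j)\}$, which is finite and nonnegative). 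Hence iterating yields an element of $[\EEE]^{(P'')}$, so the set is non-empty; this also recovers the non-emptiness already asserted in Definition~\ref{def non-vanishing EEE}(5) without invoking $\NV(\EEE)\neq 0$.

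For uniqueness — where the hypothesis $\NV(\EEE)\neq 0$ is used — I would argue that on the subset of $[\EEE]$ consisting of sequences all of whose pairwise support-comparisons are ``nested'' (which is automatic once $(P'')$-type control is in place), the operators $R_k$ satisfy the braid-type relations making $[\EEE]$ into a single orbit of a symmetric-group action on the underlying data: $R_k^2 = \mathrm{id}$ by Remark~\ref{rmk row exchange}(3), $R_k R_{k'} = R_{k'} R_k$ for $|k-k'|\geq 2$ trivially, and $R_k R_{k+1} R_k = R_{k+1} R_k R_{k+1}$ when the three consecutive supports are totally ordered by inclusion — the last identity is the genuinely combinatorial input and I expect to verify it by the case analysis in Definition~\ref{def row exchange}, using $\NV(\EEE)\neq 0$ to guarantee all intermediate triples remain honest extended $\Z$-segments (Remark~\ref{rmk row exchange}(2)) so that the braid computation does not leave $\Eseg$. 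Granting the braid relations, $[\EEE]$ is the orbit of a well-defined $S_n$-action, and two elements satisfy $(P'')$ iff the permutations sorting their supports into $(P'')$-order agree; since $(P'')$ pins down the support-ordering completely (supports are linearly ordered by $B$, with ties broken by $A$), any two $(P'')$-sequences in $[\EEE]$ have the same permutation relative to a fixed base point, hence are equal. This gives $|[\EEE]^{(P'')}| = 1$.

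The main obstacle I anticipate is the braid relation $R_k R_{k+1} R_k = R_{k+1} R_k R_{k+1}$ for a totally nested triple $\Supp(\ee_k), \Supp(\ee_{k+1}), \Supp(\ee_{k+2})$: the formulas in Definition~\ref{def row exchange} branch on the sign $\epsilon$ and on inequalities comparing $b_i - 2l_i$ to $2(b_j - 2l_j)$, so a head-on verification splits into many cases, and one must check not merely that both sides land on the same triple but that all six intermediate pairs produced along each side stay within $\Eseg$ (this is where $\NV(\EEE)\neq 0$, i.e. $\widetilde{\NV}(\EEE')\neq 0$ for \emph{every} $\EEE'\in[\EEE]$, is essential — it is exactly the statement that no intermediate row exchange vanishes). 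A cleaner route, which I would attempt first to avoid the case explosion, is to reinterpret $R$ via the known description of row exchange on extended multi-segments from \cite[\S6]{Xu21b} and \cite[Section 4.2]{Ato20b}, where the analogous well-definedness of $[\EEE]$ and existence of a canonical representative are already available; then Lemma~\ref{lem P''} becomes a matter of translating ``$(P'')$-ordered'' into the normalization used there and quoting that the orbit has a unique such representative. Either way, the existence half is elementary and the content is concentrated in uniqueness.
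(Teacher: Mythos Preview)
Your approach is genuinely different from the paper's, and the difference is instructive. The paper does not attempt to verify braid relations for the $R_k$ at all. Instead, it lifts the abstract $\Z$-segment data to an honest extended multi-segment $\EE$ of a classical group (after shifting by $sh^t$ to make everything positive so that condition~\eqref{eq ast} is automatic), uses the hypothesis $\NV(\EEE)\neq 0$ together with Theorem~\ref{thm non-vanishing} to conclude $\pi(\EE)\neq 0$, and then argues as follows: if $\EEE',\EEE''\in[\EEE]^{(P'')}$ then, as you observed, $(P'')$ forces the support sequences to agree; the corresponding $\EE',\EE''\in[\EE]$ satisfy $\pi(\EE')=\pi(\EE'')$ by Proposition~\ref{prop row exchanges}, and now M{\oe}glin's multiplicity-one theorem (Remark~\ref{rmk multiplicity one}) forces $\EE'=\EE''$. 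So the paper trades your combinatorial braid verification for a single appeal to a deep representation-theoretic fact.

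Your combinatorial route is plausible but has a gap that you underplay. The operators $R_k$ are only \emph{partially} defined: $R_k$ exists only when $\Supp(\ee_k)$ and $\Supp(\ee_{k+1})$ are nested. Consequently $[\EEE]$ is not a priori an $S_n$-orbit, and even if you verify the braid relation $R_k R_{k+1} R_k = R_{k+1} R_k R_{k+1}$ on totally nested triples, that alone does not let you conclude that a word in the $R_k$'s inducing the identity permutation acts trivially: the standard reduction of such a word to the empty word in $S_n$ may route through intermediate words whose factors are not defined on the configuration at hand (e.g.\ three consecutive supports that are not totally ordered by inclusion). To close this you would need either a Matsumoto-type statement adapted to the partial monoid generated by the $R_k$, or an argument that any two members of $[\EEE]^{(P'')}$ are connected by a path of row exchanges all of which stay within configurations where the needed relations apply---neither of which is free. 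Your ``cleaner route'' of quoting \cite{Xu21b,Ato20b} is in spirit what the paper does, but note that the actual input is M{\oe}glin's multiplicity-one, not a pre-existing normal-form statement for row exchanges.
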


By definition, $\NV(\EEE)\neq 0$ implies that $\widetilde{\NV}(\EEE) \neq 0$. The following statement is an easy consequence of \cite[Theorem A.3]{Xu21a}, which gives a sufficient condition on the converse.

\begin{thm}\label{thm Xu}
    Let $\EEE=(\ee_1,\ldots, \ee_n)$ be an admissible sequence of virtual extended $\Z$-segments. Suppose that 
    \[\Supp(\ee_1) \supseteq \Supp(\ee_2) \supseteq \cdots \supseteq  \Supp(\ee_n).\]
    Then $\NV(\EEE)\neq 0$ if and only if $\widetilde{\NV}(\EEE)\neq 0$.
\end{thm}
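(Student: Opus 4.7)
The implication $\NV(\EEE) \neq 0 \Longrightarrow \widetilde{\NV}(\EEE) \neq 0$ is immediate from Definition \ref{def non-vanishing EEE}(4), since $\EEE$ trivially belongs to $[\EEE]$. The substance lies in the reverse implication, so from now on assume $\widetilde{\NV}(\EEE) \neq 0$.

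By Definition \ref{def non-vanishing EEE}(2), any $\EEE' \in [\EEE]$ is obtained by a finite chain of row exchanges $\EEE' = R_{k_1} \circ \cdots \circ R_{k_s}(\EEE)$, so the plan is to argue by induction on $s$ that $\widetilde{\NV}$ is preserved along the chain. It thus suffices to establish the following preservation statement: if $\mathfrak{F}$ is admissible with pairwise nested supports and $\widetilde{\NV}(\mathfrak{F}) \neq 0$, then $\widetilde{\NV}(R_k(\mathfrak{F})) \neq 0$ for every admissible index $k$. Crucially, the row exchange in Definition \ref{def row exchange} leaves the underlying supports unchanged (only $l$ and $\eta$ are modified), so $R_k(\mathfrak{F})$ is again a sequence with pairwise nested supports, keeping us in the same regime throughout the induction.

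Writing $R_k(\mathfrak{F}) = (\ldots, \ee_{k-1}, \ee_{k+1}', \ee_k', \ee_{k+2}, \ldots)$, three consecutive pairs need verification. The middle pair $\NV(\ee_{k+1}', \ee_k') \neq 0$ is handed to us by Remark \ref{rmk row exchange}(3). The two outer pairs $\NV(\ee_{k-1}, \ee_{k+1}') \neq 0$ (when $k \geq 2$) and $\NV(\ee_k', \ee_{k+2}) \neq 0$ (when $k \leq n - 2$) are exactly the propagation statements for triples of extended $\Z$-segments with nested supports provided by \cite[Theorem A.3]{Xu21a}, applied to $(\ee_{k-1}, \ee_k, \ee_{k+1})$ and $(\ee_k, \ee_{k+1}, \ee_{k+2})$ respectively.

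The main obstacle I anticipate is that Xu's triple-level statement typically requires all three pairwise non-vanishings on the input triple rather than the two consecutive ones delivered by $\widetilde{\NV}$. To bridge this gap, I would first establish, by a separate induction on $j - i$, that under the nested support hypothesis $\widetilde{\NV}(\EEE) \neq 0$ implies $\NV(\ee_i, \ee_j) \neq 0$ for \emph{every} pair $i < j$; the inductive step applies \cite[Theorem A.3]{Xu21a} to the triple $(\ee_i, \ee_{i+1}, \ee_j)$, whose supports are still nested by hypothesis. With this strengthening in place, the outer preservation claims fall within the scope of Xu's theorem, the inductive step on $s$ goes through, and one concludes $\widetilde{\NV}(\EEE') \neq 0$ for every $\EEE' \in [\EEE]$, i.e., $\NV(\EEE) \neq 0$.
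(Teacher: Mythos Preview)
Your approach is genuinely different from the paper's, and it has a real gap.

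The paper does \emph{not} argue by induction on the number of row exchanges. Instead it transforms $\EEE$ into a sequence $\overline{\EEE}$ in which every segment has the same right endpoint $A+t$, realizes $\overline{\EEE}$ as the $\rho$-part of an honest extended multi-segment $\EE$ (with the remaining $\rho'$-parts chosen trivially positive), and then compares two independent non-vanishing criteria for the representation $\pi(\EE)$: Theorem~\ref{thm non-vanishing} gives $\pi(\EE)\neq 0 \Leftrightarrow \NV(\overline{\EEE})\neq 0$, while \cite[Theorem~A.3]{Xu21a} gives $\pi(\EE)\neq 0 \Leftrightarrow \widetilde{\NV}(\overline{\EEE})\neq 0$. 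The combinatorial equivalence then follows from the chain $\widetilde{\NV}(\EEE)\Leftrightarrow\widetilde{\NV}(\overline{\EEE})\Leftrightarrow\pi(\EE)\neq 0\Leftrightarrow\NV(\overline{\EEE})\Leftrightarrow\NV(\EEE)$. The point is that Xu's theorem is invoked as a \emph{representation-theoretic} non-vanishing criterion, not as a combinatorial propagation lemma for triples.

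Your plan has two related problems. First, you are using \cite[Theorem~A.3]{Xu21a} as if it were a black-box statement of the form ``for a nested triple, row exchange preserves the outer $\NV$ pair''; that is not what the paper extracts from it, and indeed the paper's own triple-exchange lemma (Lemma~\ref{lem exchange trans}) is proved \emph{after} and \emph{using} Theorem~\ref{thm Xu}, so invoking such a lemma here is circular. Second, your induction does not stay in the regime you claim. A single $R_k$ swaps positions $k$ and $k{+}1$, so after one exchange the support sequence reads $\ldots\supseteq\Supp(\ee_{k-1})\supseteq\Supp(\ee_{k+1})\subseteq\Supp(\ee_k)\supseteq\ldots$, which is pairwise comparable but no longer a descending chain. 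Your auxiliary claim ($\NV(\ee_i,\ee_j)\neq 0$ for all $i<j$) and your triple argument were both set up under the monotone chain hypothesis; once that is broken, you would need to re-establish the full pairwise $\NV$ for the permuted sequence at every step, which requires exactly the kind of exchange-transitivity (Lemma~\ref{lem exchange trans}) that you cannot yet use. The paper's route through $\pi(\EE)$ sidesteps this bookkeeping entirely.
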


To verify that $\NV(\EEE)\neq 0$, it needs to check $\NV(\EEE')\neq 0$ for all $\EEE'\in[\EEE].$ Since $\EEE'$ is related to $\EEE$ by a composition of row exchanges, the following lemma is helpful for checking when $\NV(\EEE)\neq 0$. 

\begin{lemma}\label{lem exchange trans}
    Let $\ee_1, \ee_2, \ee_3$ be three extended $\Z$-segments such that
    $(\Delta_1, \Delta_2, \Delta_3)$, $(\Delta_2, \Delta_1,\Delta_3)$ and $(\Delta_1, \Delta_3,\Delta_2)$ are all admissible, where $\Delta_i= \Supp(\ee_i)$. Assume that $\ee_1 \preceq \ee_3$. Write
    \[  R_1(\ee_1,\ee_2,\ee_3)= (\ee_2', \ee_1',\ee_3),\ \ \ \  R_2(\ee_1,\ee_2,\ee_3)= (\ee_1, \ee_3',\ee_2'').\]
    Suppose that $\widetilde{\NV}(\ee_1,\ee_2,\ee_3) \neq 0$ so that $\ee_1',\ee_2',\ee_2'', \ee_3' \in \Eseg$ (Remark \ref{rmk row exchange}(2)). Then $\NV(\ee_1',\ee_3) \neq 0 $ if and only if $\NV(\ee_1,\ee_3')\neq 0$.
\end{lemma}
\begin{proof}
    For $i=1,2,3$, write $\ee_i=([A_i,B_i],l_i,\eta_i)$ and $\ee_i'=([A_i,B_i],l_i',\eta_i').$ 
   First, we consider the case that $[A_2,B_2]$ contains both $ [A_1,B_1] $ and $[A_3,B_3]$.      In this case, the formula for row exchanges implies that $(l_i',\eta_i')=(l_i,(-1)^{A_2-B_2}\eta_i)$ for $i=1,3$ and the conclusion holds.

   Next, we consider the case that $[A_2,B_2]$ contains one of $ [A_1,B_1] $ or $[A_3,B_3]$. By our assumption, this is exactly the case that
   $[A_1,B_1] \supseteq [A_2,B_2] \supseteq [A_3,B_3]$. Since $\widetilde{\NV}(\ee_1,\ee_2,\ee_3) \neq 0$, Theorem \ref{thm Xu} implies that $\NV(\ee_1,\ee_2,\ee_3) \neq 0$. Hence, $\NV(\ee_1',\ee_3) \neq 0 $ and $\NV(\ee_1,\ee_3')\neq 0$. 

   Finally, we consider the remaining case that $[A_2,B_2]$ is contained in both $ [A_1,B_1] $ and $[A_3,B_3]$. 
   If $A_2-B_2+1=2l_2$, then from Definition \ref{def row exchange}, we see that $\ee_1'=\ee_1$ and $\ee_3'=\ee_3$ and the conclusion trivially holds. Thus, we assume that $k:= A_2- B_2+1- 2l_2 >0$ from now on.  
    Since $\widetilde{\NV}(\ee_1,\ee_2,\ee_3) \neq 0$, by Definition \ref{def non-vanishing}, we must have $l_2 \leq l_1$ and $l_2 \leq l_3$ (note that $b_2-l_2\geq l_2$).
   Then it is a straightforward check from the definitions that the desired conclusion holds for the triple $(\ee_1,\ee_2,\ee_3)$ if and only if it holds for $(\widetilde{\ee_1},\widetilde{\ee_2},\widetilde{\ee_3})$, where
   \[ \widetilde{\ee_i}:= ([A_i- l_2, B_i+l_2], l_i-l_2, \eta_i ).\]
   Thus, we may further assume that $l_2=0$, which implies that $l_2'=0$ as well. It remains to prove the following claim: Let $\ee_2', \ee_1', \ee_3$ be three $\Z$-extended segments satisfying the following conditions.
   \begin{itemize}
       \item $\ee_2'=([A_2,B_2],0,\eta_2)$.
       \item $ \Supp(\ee_2') \subseteq \Supp(\ee_1')$, $\Supp(\ee_2') \subseteq \Supp(\ee_3)$.
       \item $\NV(\ee_2',\ee_1') \neq 0$.
       \item  Write $R_2 \circ R_1 (\ee_2', \ee_1',\ee_3)= (\ee_1,\ee_3',\ee_2'')$. Then $\ee_3'$ is a $\Z$-extended segment.
   \end{itemize}
   Then $\NV(\ee_1',\ee_3) \neq 0$ if and only if $\NV(\ee_1, \ee_3')\neq 0$.

   We apply induction on $k:= A_2-B_2+1$ to prove the claim. The case that $k=1$ follows from a straightforward case by case computation, which we omit. See Example \ref{exmp l=0 row exchange} for certain intuition for this case.  For $k>1$, we
   break $\ee_2'$ into two pieces: 
   \[ \ee_{2,1}:= ([B_2,B_2],0,\eta_2), \ \ \ee_{2,2}:= ([A_2,B_2+1],0,-\eta_2).\]
   One can check the following observation by a direct computation:   For any $\ee\in \Eseg$ such that $\Supp(\ee) \supseteq \Supp(\ee_2')$ and  $\NV(\ee_2',\ee)\neq 0$, write
   \[ R(\ee_2',\ee)=(\ee',\ee_2^{\ast}),\ \ R_{1}\circ R_2(\ee_{2,1},\ee_{2,2},\ee)= R_1(\ee_{2,1},\ee'', \ee_{2,2}^{\ast} )= (\ee''',\ee_{2,1}^{\ast}, \ee_{2,2}^{\ast} ).\]
  Then $\NV(\ee_{2,2},\ee) \neq 0$, $\NV(\ee_{2,1},\ee'') \neq 0$,  and most importantly, $\ee'''=\ee'$. Now write
   \[ R_3 \circ R_2( \ee_{2,1}, \ee_{2,2}, \ee_1', \ee_3)= (\ee_{2,1}, \ee_{1}'', \ee_3'', \ee_{2,2}^{\ast}), \ R_2 \circ R_1(\ee_{2,1}, \ee_{1}'', \ee_3'', \ee_{2,2}^{\ast})= (\ee_{1}''', \ee_{3}''', \ee_{2,1}^{\ast}, \ee_{2,2}^{\ast}). \]
   Then, by the above observation, we have $\ee_1'''=\ee$ and $\ee_3'''=\ee_3'$. By the induction hypothesis and the case that $k=1$, we have 
   \[ \NV(\ee_1',\ee_3 )\neq 0 \Longleftrightarrow \NV(\ee_1'',\ee_3' )\neq 0  \Longleftrightarrow \NV(\ee_1''',\ee_3''' )\neq 0.  \]
   This completes the proof of the claim and the lemma.
\end{proof}

Finally, we define a set connecting the non-vanishing of an admissible sequence of extended $\Z$-segments with an interval.

\begin{defn}\label{def EE_ee}
Let  $\Delta=[A,B]$ be a $\Z$-segment and $S \subseteq \Eseg_{\Delta}$ be an interval. Let $\EEE$ be an admissible sequence of extended $\Z$-segments such that $\NV(\EEE) \neq 0$. We define $\NV_{\EEE}(S)$ as follows.

    Write $\EEE^{(P')}=(\ee_1 ,\ldots, \ee_n)$ and $\Supp(\ee_i)=[A_i,B_i]$. Let 
\[j:= \min(\{ 1 \leq i \leq n \ | \ B_j>B \} \cup \{n+1\}).\]
For any $\ee \in \Eseg_{\Delta}$, we let $\ee^\dagger$ be  the unique member in $\Eseg_{\Delta}$ such that $\NV(\ee,\ee^{\dagger})\neq 0$ (see Remark \ref{rmk non-vanishing}(2)). Then we define
\[ \EEE_{\ee}:= (\ee_1 ,\ldots, \ee_{j-1}, \ee, \ee^{\dagger} , \ee_{j},\ldots, \ee_{n} ). \]
Finally, we define
\begin{align*}
   \NV_{\EEE}(S)= \{ \ee \in S \ | \ \NV( \EEE_{\ee} )\neq 0  \}.
\end{align*}
\end{defn}

We list some properties  for $\ee^{\dagger}$ as follows.

\begin{lemma}\label{lem e dagger}Let $\ee, \ee_1, \ee_2$ be extended $\Z$-segments. 
    \begin{enumerate}
            \item We have that $R(\ee, \ee^{\dagger})= (\ee,\ee^{\dagger})$.
            \item We have that $\NV(\ee_1, \ee_2) \neq 0$ if and only if $ \NV( \ee_1, \ee_2 , \ee_2^{\dagger}) \neq 0$.  Similarly, $\NV(\ee_1^{\dagger}, \ee_2) \neq 0$ if and only if $ \NV( \ee_1, \ee_1^{\dagger} , \ee_2) \neq 0.$
            \item If $R(\ee_1,\ee_2)= (\ee_2', \ee_1')$, then 
            \[ R_2 \circ R_{1} (\ee_1,\ee_2, \ee_2^{\dagger})= R_2(\ee_2', \ee_1' , \ee_2^{\dagger})= (\ee_2', (\ee_2')^{\dagger}, \ee_1). \]
        \end{enumerate}
        \end{lemma}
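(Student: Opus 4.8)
\textbf{Proof proposal for Lemma \ref{lem e dagger}.}

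The plan is to verify each of the three parts by unwinding the definitions of $\ee^{\dagger}$ (Remark \ref{rmk non-vanishing}(2)) and of the row exchange operator $R$ (Definition \ref{def row exchange}). Recall that by Remark \ref{rmk non-vanishing}(2), for an extended $\Z$-segment $\ee=([A,B],l,\eta)$, the segment $\ee^{\dagger}=([A,B],l^{\dagger},\eta^{\dagger})$ is characterized by $\Supp(\ee^{\dagger})=\Supp(\ee)$ together with $l^{\dagger}=l$ and the existence of lifts with $\epsilon:=(-1)^{A-B}\eta\eta^{\dagger}=1$; in particular $\ee^{\dagger}$ has the same length $b=A-B+1$ and the same $l$-parameter as $\ee$.

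For Part (1): since $\Supp(\ee)=\Supp(\ee^{\dagger})$, say both equal $[A,B]$, the row exchange $R(\ee,\ee^{\dagger})$ falls under Case 1 (or equivalently Case 2, since the supports coincide and one can use either; one should check the boundary convention, but the two supports being literally equal means $b_1=b_2=b$ and $l_1=l_2=l$). With $\epsilon=1$ and $b-2l_2 = b - 2l \geq 2(b-2l)- (b-2l) $, the relevant inequality $b_2-2l_2 \geq 2(b_1-2l_1)$ reads $b-2l \geq 2(b-2l)$, i.e. $b-2l\leq 0$, i.e. $b=2l$. So one must split into the case $b>2l$ (where we land in Case 1(a), and a direct computation gives $(l_2',\eta_2') = (b-(l+(b-2l)), (-1)^{A-B}\eta_2) = (l, (-1)^{A-B}\eta_2)$, which after checking signs returns $\ee^{\dagger}$, and $(l_1',\eta_1')=(l,(-1)^{A-B}\eta_1)=\ee$) and the case $b=2l$ (where $\eta$ is only defined up to $\pm 1$ and the statement is immediate). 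So Part (1) is a short case check; the main point is that $R$ fixes a ``non-vanishing pair with equal support.''

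For Part (2): this is essentially the observation that inserting $\ee_2^{\dagger}$ right after $\ee_2$ (respectively $\ee_1^{\dagger}$ right after $\ee_1$) does not create new obstructions. Forward direction: if $\NV(\ee_1,\ee_2)\neq 0$, then by Theorem \ref{thm Xu} (after possibly row-exchanging so that supports are nested, which is legitimate here since $\Supp(\ee_2)=\Supp(\ee_2^{\dagger})$ and we can invoke Remark \ref{rmk row exchange}(3)) it suffices to check $\widetilde{\NV}$ on the nested-support representative: $\NV(\ee_1,\ee_2)\neq 0$ is given, and $\NV(\ee_2,\ee_2^{\dagger})\neq 0$ holds by the very definition of $\ee_2^{\dagger}$; combined with Part (1) which says row exchanges across the pair $(\ee_2,\ee_2^{\dagger})$ are trivial, every element of $[(\ee_1,\ee_2,\ee_2^{\dagger})]$ still has $\widetilde{\NV}\neq 0$. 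Conversely $\NV(\ee_1,\ee_2,\ee_2^{\dagger})\neq 0$ trivially forces $\NV(\ee_1,\ee_2)\neq 0$ by restricting to the subsequence (since $\widetilde{\NV}$ of the triple includes $\NV(\ee_1,\ee_2)\neq 0$, and $[\cdot]$ of the pair embeds into $[\cdot]$ of the triple via Part (1)). The symmetric statement for $\ee_1^{\dagger}$ is identical.

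For Part (3): write $R(\ee_1,\ee_2)=(\ee_2',\ee_1')$. Then $R_1(\ee_1,\ee_2,\ee_2^{\dagger}) = (\ee_2',\ee_1',\ee_2^{\dagger})$ directly. Next apply $R_2$: this row-exchanges the pair $(\ee_1',\ee_2^{\dagger})$. One needs to identify $R(\ee_1',\ee_2^{\dagger})$. Since $\Supp(\ee_2^{\dagger})=\Supp(\ee_2)=\Supp(\ee_2')$ (the row exchange preserves supports) and $\NV(\ee_1',\ee_2')\neq 0$ with $\ee_2', (\ee_2')^{\dagger}$ forming a non-vanishing pair of equal support, the claim is that $R(\ee_1',\ee_2^{\dagger}) = ((\ee_2')^{\dagger}, \ee_1)$. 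The cleanest way to see this is: the composite $R_2\circ R_1$ is (up to the trivial exchange in Part (1)) an involution-type identity coming from the known fact (Remark \ref{rmk row exchange}(3)) that $R(\ee_2',\ee_1')=(\ee_1,\ee_2)$, together with the ``braid''-like compatibility in Lemma \ref{lem exchange trans}. Alternatively one can just compute: in each of the cases $\epsilon=\pm1$ and each of the subcases of Definition \ref{def row exchange}, push the formulas through, using that $\ee_2^{\dagger}$ shares $l$-parameter with $\ee_2$ and hence $(\ee_2')^{\dagger}$ shares $l$-parameter with $\ee_2'$.

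\textbf{Expected main obstacle.} The delicate point throughout is bookkeeping the signs $\eta$ (and the up-to-$\pm1$ ambiguity when $b=2l$), since the row exchange formulas alternate between $(-1)^{A-B}$ and $(-1)^{A-B+1}$ twists and the $\dagger$ operation also involves a sign flip. I expect Part (3) to be the hardest: one must verify that after two row exchanges the third slot returns to exactly $\ee_1$ (not $\ee_1$ with a flipped sign or shifted $l$), and this requires either a patient case analysis over all branches of Definition \ref{def row exchange} or a clean conceptual argument via Remark \ref{rmk row exchange}(3) and Lemma \ref{lem exchange trans}. My plan is to first try the conceptual route — realize $R_2\circ R_1$ on $(\ee_1,\ee_2,\ee_2^{\dagger})$ as a known composite of elementary exchanges whose effect is already recorded — and fall back on the direct computation (with the $add^{-1}$ reduction to singleton supports as in the proof of Lemma \ref{lem exchange trans}) only if the conceptual argument leaves gaps at the $b=2l$ boundary.
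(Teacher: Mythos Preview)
The paper's own proof is one line: ``These statements follow from direct computations, which we omit.'' Your fallback plan --- unwind Definition \ref{def row exchange} case by case --- is exactly that, and is correct. In particular your verification of Part (1) is right, and the direct computation you sketch for Part (3) (the key identity $R(\ee_1',\ee_2^{\dagger})=((\ee_2')^{\dagger},\ee_1)$, checked by noting that $\epsilon'=(-1)^{A_1-B_1}\eta_1'\eta_2^{\dagger}=\epsilon$ so one lands in the same subcase of Case 1/2) goes through cleanly.

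Your primary ``conceptual'' route, however, has two soft spots you should not rely on. For Part (2), Theorem \ref{thm Xu} needs a \emph{decreasing} chain of supports; row exchanges only reorder supports, they do not nest them, so when $\Supp(\ee_1)\subsetneq\Supp(\ee_2)$ you cannot reach a decreasing representative without first applying $R_2\circ R_1$ --- i.e.\ without already invoking Part (3). (When the supports are not nested at all, $R_1$ is undefined and $[(\ee_1,\ee_2,\ee_2^{\dagger})]$ is a singleton by Part (1), so the equivalence is immediate; and when $\Supp(\ee_1)\supseteq\Supp(\ee_2)$, Theorem \ref{thm Xu} does apply directly.) For Part (3), Lemma \ref{lem exchange trans} is about preservation of $\NV$ under $R_1\circ R_2$ on a triple with an \emph{extra} third segment moved past the first two; it does not compute the explicit output of $R_2\circ R_1$ on $(\ee_1,\ee_2,\ee_2^{\dagger})$, so it will not give you the identity you need. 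In short: drop the conceptual detours, prove Part (3) first by the direct case check you describe, and then Part (2) follows either by the same kind of check or by combining Part (1), Part (3), and Theorem \ref{thm Xu} on the decreasing-support representative.
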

        \begin{proof}
            These statements follow from  direct computations, which we omit.
        \end{proof}
    
The following proposition will be used to decompose a certain parabolic induction in later sections (see Theorems \ref{thm reducibility unitary induction} and \ref{thm reduction}).

    \begin{prop}\label{prop NV segment}
   Suppose $\NV(\EEE) \neq 0$ and let $S \subseteq \Eseg_{\Delta}$, $S' \subseteq \Eseg_{\Delta'}$ be two intervals.
    \begin{enumerate}
        \item Then, the set $\NV_{\EEE}(S)$ is an interval.
        \item Suppose that the following conditions hold:
        \begin{enumerate}
            \item [(a)]$\Delta \preceq \Delta'$ and $\Delta \neq \Delta'$.
            \item [(b)] $|\NV_{\EEE}(S)|>1$ and $|\NV_{\EEE}(S')|>1$.
            \item [(c)] $\ee_1^*, \ee_2^* \in \NV_{\EEE}(S)$ are adjacent, $|\NV_{\EEE_{\ee_1^*}}(S')|=1$ and $|\NV_{\EEE_{\ee_2^*}}(S')|>0$.
        \end{enumerate}
        Then $|\NV_{\EEE_{\ee_2^*}}(S')|=2$.
    \end{enumerate}
    \end{prop}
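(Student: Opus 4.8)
\textbf{Proof plan for Proposition \ref{prop NV segment}.}

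The strategy is to reduce both parts to the combinatorial statements about intervals in $\Eseg_{\Delta}$ established in Lemmas \ref{lem key observation}, \ref{lem adj}, \ref{lem observation row exchange}, and \ref{lem e dagger}. For Part (1), I would first unwind Definition \ref{def EE_ee}: membership of $\ee$ in $\NV_{\EEE}(S)$ is equivalent to $\NV(\EEE_{\ee}) \neq 0$, i.e., $\widetilde{\NV}(\EEE'_{\ee}) \neq 0$ for every $\EEE' \in [\EEE_{\ee}]$. Using Lemma \ref{lem e dagger}(2)(3), the insertion of the pair $(\ee, \ee^{\dagger})$ interacts nicely with row exchanges, so that checking $\NV(\EEE_{\ee}) \neq 0$ reduces to finitely many pairwise conditions $\NV(-, \ee) \neq 0$ and $\NV(\ee, -) \neq 0$ against the entries of $\EEE^{(P'')}$ adjacent to the insertion slot. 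Concretely, after moving $\ee$ (and $\ee^{\dagger}$) around via row exchanges as permitted by Lemma \ref{lem exchange trans}, the constraint on $\ee$ becomes an intersection $\bigcap_k \NV_{(\ee_k, -)}(\Delta)$ (or $\NV_{(-, \ee_k)}(\Delta)$) over the relevant neighbors $\ee_k$; each such set is an interval by Lemma \ref{lem adj}(i), and a finite intersection of intervals in $\Eseg_{\Delta}$ is an interval by (iterated use of) Lemma \ref{lem key observation}(a). Intersecting further with $S$, which is itself an interval by hypothesis, shows $\NV_{\EEE}(S)$ is an interval.

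For Part (2), the point is a bookkeeping argument linking the interval $\NV_{\EEE_{\ee_1^*}}(S')$ for one choice of insertion $\ee_1^*$ to the one for an adjacent choice $\ee_2^*$. I would show that passing from $\EEE_{\ee_1^*}$ to $\EEE_{\ee_2^*}$, where $\ee_1^*$ and $\ee_2^*$ are adjacent in $\Eseg_{\Delta}$, changes the constraint on the $S'$-slot in a controlled way: by Lemma \ref{lem observation row exchange}, row exchanging $\ee_i^*$ (and $\ee_i^{*\dagger}$) past the entries between the two insertion slots sends adjacent extended $\Z$-segments to adjacent ones and virtual intervals to virtual intervals, so the resulting constraint intervals $\NV_{\EEE_{\ee_1^*}}(S')$ and $\NV_{\EEE_{\ee_2^*}}(S')$ inside $S'$ are themselves adjacent intervals (after intersecting with $S'$). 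Now invoke the hypotheses: $|\NV_{\EEE_{\ee_1^*}}(S')| = 1$, $|\NV_{\EEE_{\ee_2^*}}(S')| > 0$, and---crucially---condition (b) gives $|\NV_{\EEE}(S')| > 1$, which, combined with (a) ($\Delta \preceq \Delta'$, $\Delta \neq \Delta'$) and Lemma \ref{lem adj}(iii), forces the "ambient" constraint interval (before specializing the $S$-slot) to have size $> 1$, so each $\NV_{\EEE_{\ee_i^*}}(S')$ is a sub-interval of something of size $\geq 2$. Then Lemma \ref{lem key observation}(c), applied with $S_1, S_2$ the two size-$>1$ intervals whose intersection has size $1$ and $S_1'$ adjacent to $S_1$, yields exactly $|\NV_{\EEE_{\ee_2^*}}(S')| = 2$.

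The main obstacle I anticipate is \emph{correctly identifying which pairwise non-vanishing conditions govern the $S'$-slot} as $\ee$ ranges over $S$, and verifying that moving the inserted pair $(\ee, \ee^{\dagger})$ through the sequence via row exchanges is legitimate---i.e., that the intermediate extended $\Z$-segments stay in $\Eseg$ (not merely $\VEseg$) and that Lemma \ref{lem exchange trans}'s hypotheses are met at each step. This is where the hypothesis $\Delta \preceq \Delta'$, $\Delta \neq \Delta'$ does real work: it pins down the relative position of the two insertion slots in $\EEE^{(P'')}$ and guarantees (via Lemma \ref{lem adj}(iii)) that the relevant constraint interval is not already forced to be a singleton, which is precisely what makes the dichotomy "size $1$ or size $2$" available rather than a more degenerate situation. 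I would handle the row-exchange legitimacy by the same $add^{-1}$/decomposition-into-length-one-segments trick used in the proof of Lemma \ref{lem exchange trans}, reducing to the case where the segment being moved past has length one, where everything is an explicit computation.
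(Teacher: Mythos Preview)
Your plan follows the same overall architecture as the paper's proof, and for Part~(1) your ``intersection of intervals'' framing is equivalent to the paper's sandwich argument (given $\ee_\alpha,\ee_\beta\in\NV_{\EEE}(S)$, any $\ee_\gamma$ between them is also in). One clarification: the pairwise conditions are not against the original entries of $\EEE^{(P'')}$ but against the neighbors in each $\EEE_{\ee,\sigma}$ with $\sigma$ ranging over the subset $[\EEE_{\ee}]^{\dagger}$ of configurations keeping $\ee,\ee^{\dagger}$ adjacent; the paper first uses Lemma~\ref{lem e dagger} to reduce to this subset, then observes (via Lemma~\ref{lem e dagger}(3)) that deleting the inserted pair gives back an element of $[\EEE]$, so the ``outside'' entries are determined by $\sigma$ alone and do not depend on $\ee$. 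That is what makes your intersection description legitimate.

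For Part~(2) there is a genuine gap in your outline. You assert that $\NV_{\EEE_{\ee_1^*}}(S')$ and $\NV_{\EEE_{\ee_2^*}}(S')$ are adjacent intervals, but this does not follow directly: each is a priori an intersection over \emph{all} $\sigma\in[\EEE_{\ee_i^*,\ee'}]^{\dagger}$, and an intersection of adjacent families need not be adjacent. The paper's key step, which you do not isolate, is to pass to the smaller set $[\EEE_{\ee,\ee'}]^{\dagger\dagger}$ in which the two inserted pairs $(\ee,\ee^{\dagger})$ and $(\ee',(\ee')^{\dagger})$ are \emph{consecutive}, and then to use Lemma~\ref{lem exchange trans} together with Lemma~\ref{lem e dagger}(3) to show that $\widetilde{\NV}$ at \emph{one fixed} such $\sigma$ already decides $\NV(\EEE_{\ee,\ee'})$. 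At that single $\sigma$ one gets the clean factorization
\[
|\NV_{\EEE_{\ee}}(S')|=\bigl|\,\NV_{(\widetilde{\ee}^{\dagger},-)}(\Delta')\cap T_\sigma'\,\bigr|,
\]
where $T_\sigma'$ is independent of $\ee$. Now Lemma~\ref{lem key observation}(c) is applied with $S_1=\NV_{(\widetilde{\ee_1^*}^{\dagger},-)}(\Delta')$, $S_2=T_\sigma'$, and $S_1'=\NV_{(\widetilde{\ee_2^*}^{\dagger},-)}(\Delta')$: adjacency of $S_1$ and $S_1'$ comes from Lemma~\ref{lem adj}(ii), and $|S_1|>1$ from Lemma~\ref{lem adj}(iii) using $\Delta\preceq\Delta'$, $\Delta\neq\Delta'$. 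Note that it is the \emph{pre-intersection} intervals $S_1,S_1'$ that are adjacent, not $\NV_{\EEE_{\ee_1^*}}(S')$ and $\NV_{\EEE_{\ee_2^*}}(S')$ themselves; your phrasing conflates these. The paper also disposes of the degenerate case $[\EEE_{\ee,\ee'}]^{\dagger\dagger}=\emptyset$ separately (then $\NV_{\EEE_{\ee}}(S')=\NV_{\EEE}(S')$ for all $\ee$, contradicting (b) and (c)). Finally, there is no need to revisit the $add^{-1}$ trick from the proof of Lemma~\ref{lem exchange trans}; it is invoked only as a black box.
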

\begin{proof} 
Let us set up some notation for this proof. For any $\ee \in S$, write 
\[\EEE_{\ee}= (\ee_1,\ldots, \ee_{j-1}, \ee_j=\ee, \ee_{j+1}=\ee^{\dagger}, \ee_{j+2}, \ldots, \ee_{n}).\]
Any member of $ [\EEE_{\ee}]$ is of the form
\begin{align}\label{eq EE_ee,sigma}
    \EEE_{\ee,\sigma}= (\widetilde{\ee}_{\sigma^{-1}(1)},\ldots, \widetilde{\ee}_{\sigma^{-1}(n)}  )= R_{k_1}\circ \cdots \circ R_{k_{s}} (\EEE).
\end{align}
Here $\sigma$ is a permutation of $\{1,\dots, n\}$, which is the product of transpositions given by each $R_{k_i}$. In particular, the extended $\Z$-segment $\ee_i$ is sent to $\widetilde{\ee}_{\sigma(i)}$ under this row exchanges process. We let $[\EEE_{\ee}]^{\dagger}$ denote the subset of $ \EEE_{\ee, \sigma} \in [\EEE_{\ee}]$ such that $ \sigma(j+1)- \sigma(j)=1$. Namely, $\ee$ and $\ee^{\dagger}$ are still neighbors in $\EEE_{\ee, \sigma}$ after the permutation.

We first give two observations of Lemma \ref{lem e dagger}.
\begin{enumerate}
    \item [(i)] $\NV(\EEE_{\ee}) \neq 0$ if and only if $\widetilde{\NV}( \EEE_{\ee,\sigma}) \neq 0$ for all $ \EEE_{\ee,\sigma} \in [\EEE_{\ee}]^{\dagger}$.
    \item [(ii)] Suppose $\EEE_{\ee,\sigma} \in [\EEE_{\ee}]^{\dagger}$. We let $\EEE_{\sigma}$ denote the sequence obtained from $\EEE_{\ee,\sigma} $ by deleting $\ee_{\sigma(j)}'$ and $\ee_{\sigma(j+1)}'$. Then $\EEE_{\sigma} \in [\EEE]$ and hence $\widetilde{\NV}(\EEE_{\sigma}) \neq 0$. 
\end{enumerate}
Indeed, for Observation (i), Parts (2) and (3) of Lemma \ref{lem e dagger} guarantee that if $\widetilde{\NV}(\EEE_{\ee, \sigma} ) =0$, then we may obtain another $\EEE_{\ee, \sigma'} \in [\EEE_{\ee}]^{\dagger}$ such that $\widetilde{\NV}(\EEE_{\ee, \sigma'} ) =0$. Observation (ii) is a direct consequence of Lemma \ref{lem e dagger}(3).

Now we prove Part (1), which is equivalent to showing the following claim: Suppose $\ee_\alpha \neq \ee_\beta \in \NV_{\EEE}(S)$ and let $S_{\alpha,\beta}$ be the smallest interval containing $\ee_\alpha$ and $\ee_\beta$. Then for any $\ee_\gamma \in S_{\alpha,\beta}$, we must have $\NV(\EEE_{\ee_\gamma})\neq 0$ as well. 

Take any $\ee_\gamma \in S_{\alpha,\beta}$. By Observation (i), it suffices to show that  $\widetilde{\NV}(\EEE_{\ee_\gamma,\sigma})\neq 0$ for any $\EEE_{\ee_\gamma, \sigma} \in [\EEE_{\ee_\gamma}]^{\dagger}$.  Note that $\ee_\gamma^{\dagger}$ also lies $S_{\alpha^{\dagger}, \beta^{\dagger}}$, the smallest interval containing $\ee_{\alpha}^{\dagger}$ and $\ee_{\beta}^{\dagger}$. For $\delta\in \{\alpha, \beta\}$, since $\NV(\EEE_{\ee_\delta})\neq 0$, we have $\widetilde{\NV}(\EEE_{\ee_\delta,\sigma})\neq 0$. Say $\sigma(j)=k$, $\sigma(j+1)=k+1$ and write 
\begin{align*}
    \EEE_{\ee_{\delta},\sigma}= (\widetilde{\ee}_{\sigma^{-1}(1)},\ldots,\widetilde{\ee}_{\sigma^{-1}(k-1)}, \widetilde{\ee}_{\delta}, \widetilde{\ee}_{\delta}^{\dagger}, \widetilde{\ee}_{\sigma^{-1}(k+2)}, \ldots,\widetilde{\ee}_{\sigma^{-1}(n)}),
\end{align*}
 where $\delta\in \{\alpha, \beta, \gamma\}$. Note that here $\widetilde{\ee}_{\sigma^{-1}(i)}$ for $i \not\in \{k,k+1\}$ is independent of $\delta$ by Lemma \ref{lem e dagger}(3).
 By Observation (ii), we have $\widetilde{\NV}(\EEE_{\sigma})\neq 0$, where
 \[ \EEE_{\sigma}= (\widetilde{\ee}_{\sigma^{-1}(1)},\ldots,\widetilde{\ee}_{\sigma^{-1}(k-1)}, \widetilde{\ee}_{\sigma^{-1}(k+2)}, \ldots,\widetilde{\ee}_{\sigma^{-1}(n)}) \in [\EEE]. \]
 Thus, $\widetilde{\NV}(\EEE_{\gamma,\sigma})\neq 0$ if and only if $\NV( \widetilde{\ee}_{\sigma^{-1}(k-1)}, \widetilde{\ee}_{\gamma})\neq  0$ and $\NV(\widetilde{\ee}_{\gamma}^{\dagger}, \widetilde{\ee}_{\sigma^{-1}(k+2)})\neq 0$. By Lemma \ref{lem observation row exchange}, $\widetilde{\ee}_{\gamma}$ must lie in the smallest interval containing $ \widetilde{\ee}_{\alpha}$ and $\widetilde{\ee}_{\beta}$. On the other hand, Lemma \ref{lem adj} implies that $\NV_{(\widetilde{\ee}_{\sigma^{-1}(k-1)},-)}(\Delta)$ is an interval containing $ \widetilde{\ee}_{\alpha}$ and $\widetilde{\ee}_{\beta}$. Therefore, we conclude that $\widetilde{\ee}_{\gamma} \in \NV_{(\widetilde{\ee}_{\sigma^{-1}(k-1)},-)}(\Delta)$ and $\NV( \widetilde{\ee}_{\sigma^{-1}(k-1)}, \widetilde{\ee}_{\gamma})\neq  0$. A similar argument shows that $\NV(\widetilde{\ee}_{\gamma}^{\dagger}, \widetilde{\ee}_{\sigma^{-1}(k+2)})\neq 0$.  This completes the proof of the claim and Part (1).

Now we prove Part (2). For $\ee \in \NV_{\EEE}(S)$ and $\ee' \in \NV_{\EEE}(S')$, write 
\[\EEE_{\ee,\ee'}:= (\EEE_{\ee})_{\ee'} = (\ee_1,\ldots, \ee_{j}=\ee, \ee_{j+1}=\ee^{\dagger}, \ldots,\ee_{j'}=\ee', \ee_{j'+1}=(\ee')^{\dagger}, \ldots, \ee_{n}),\]
and define $\EEE_{\ee,\ee', \sigma}:= (\EEE_{\ee})_{\ee',\sigma}$ as in \eqref{eq EE_ee,sigma}. Let 
\begin{align*}
    [\EEE_{\ee,\ee'}]^{\dagger}&:= \{ \EEE_{\ee,\ee', \sigma} \in [ \EEE_{\ee,\ee'}]\ | \ \sigma(j+1)-\sigma(j)=1 \text{ and }\sigma(j'+1)-\sigma(j')=1  \},\\
    [\EEE_{\ee,\ee'}]^{\dagger\dagger}&:= \{ \EEE_{\ee,\ee', \sigma} \in [ \EEE_{\ee,\ee'}]^{\dagger}\ | \ \sigma(j')=\sigma(j+1)+1  \}.
\end{align*}
It is possible that $[\EEE_{\ee,\ee'}]^{\dagger\dagger}$ is empty.

Observations (i) and (ii) imply that $ \NV(\EEE_{\ee,\ee'})\neq 0$ if and only if $(\ee,\ee') \in \NV_{\EEE}(S)\times \NV_{\EEE}(S')$ and $\widetilde{\NV}(\EEE_{\ee,\ee',\sigma})\neq 0$ for any $\EEE_{\ee,\ee',\sigma} \in [\EEE_{\ee,\ee'}]^{\dagger\dagger}$. Suppose that $[\EEE_{\ee,\ee'}]^{\dagger\dagger}$ is empty for some (equivalently, for all) $(\ee,\ee') \in \NV_{\EEE}(S)\times \NV_{\EEE}(S')$. Then $\NV_{\EEE_{\ee}}(S')= \NV_{\EEE}(S')$ for any $\ee \in \NV_{\EEE}(S)$. This contradicts Conditions (b) and (c). Thus, $[\EEE_{\ee,\ee'}]^{\dagger\dagger}$ must be non-empty for some $(\ee,\ee') \in \NV_{\EEE}(S)\times \NV_{\EEE}(S')$. By Lemmas \ref{lem exchange trans}, \ref{lem e dagger}(3), and the above discussion, the following three statements are equivalent.
\begin{itemize}
    \item $\widetilde{\NV}( \EEE_{\ee,\ee',\sigma}) \neq 0$ for a fixed $\EEE_{\ee,\ee',\sigma} \in [\EEE_{\ee,\ee'}]^{\dagger\dagger}$. 
    \item $\widetilde{\NV}( \EEE_{\ee,\ee',\sigma}) \neq 0$ for any $\EEE_{\ee,\ee',\sigma} \in [\EEE_{\ee,\ee'}]^{\dagger\dagger}$.
    \item $\NV(\EEE_{\ee, \ee'})\neq0$.
\end{itemize}

Now we fix an $\EEE_{\ee,\ee',\sigma} \in [\EEE_{\ee,\ee'}]^{\dagger\dagger}$ and write $(\sigma(j), \sigma(j+1), \sigma(j'), \sigma(j'+1))=(k,k+1,k+2,k+3)$, 
\[ \EEE_{\ee,\ee',\sigma}= (\widetilde{\ee}_{\sigma^{-1}(1)},\ldots, \widetilde{\ee}_{\sigma^{-1}(k-1)}, \widetilde{\ee},\widetilde{\ee}^{\dagger}, \widetilde{\ee'}, \widetilde{\ee'}^{\dagger}, \widetilde{\ee}_{\sigma^{-1}(k+4)}, \ldots,\widetilde{\ee}_{\sigma^{-1}(n)}  )=: \EEE_{\widetilde{\ee},\widetilde{\ee'}}. \]
Note that $\widetilde{\ee}$ only depends on $\ee$ and $\sigma$ but not on $\ee'$ and similarly for $\widetilde{\ee'}$. By Lemma \ref{lem observation row exchange}, the intervals $S, S'$ are sent to the virtual intervals $S_{\sigma}, S_{\sigma'}$ under the composition of row exchanges from $\EEE_{\ee,\ee'}$ to $\EEE_{\ee,\ee',\sigma}$. Then the intervals $\NV_{\EEE}(S)$ and $ \NV_{\EEE}(S')$ are sent to intervals $ T_{\sigma}:= S_{\sigma} \cap \NV_{\sigma}$ and $T_{\sigma}':=S_{\sigma}' \cap \NV_{\sigma}'$, where $\NV_{\sigma}$ and $\NV_{\sigma}'$ are the following intervals
\begin{align*}
  \NV_{\sigma}&:= \{\widetilde{\ee} \in \Eseg_{\Delta}\ | \ \NV(\widetilde{\ee}_{\sigma^{-1}(1)},\ldots, \widetilde{\ee}_{\sigma^{-1}(k-1)}, \widetilde{\ee},\widetilde{\ee}^{\dagger},\widetilde{\ee}_{\sigma^{-1}(k+4)}, \ldots,\widetilde{\ee}_{\sigma^{-1}(n)}  )\neq 0 \},\\
  \NV_{\sigma}'&:=\{\widetilde{\ee'} \in \Eseg_{\Delta'}\ | \ \NV(\widetilde{\ee}_{\sigma^{-1}(1)},\ldots, \widetilde{\ee}_{\sigma^{-1}(k-1)}, \widetilde{\ee'}, \widetilde{\ee'}^{\dagger},\widetilde{\ee}_{\sigma^{-1}(k+4)}, \ldots,\widetilde{\ee}_{\sigma^{-1}(n)}  )\neq 0 \}.
\end{align*}
Again, Observation (ii) implies that $\widetilde{\NV}( \EEE_{\ee,\ee',\sigma}) \neq 0$ if and only if $\NV( \widetilde{\ee}^{\dagger}, \widetilde{\ee'}) \neq 0$. We conclude that $\NV(\EEE_{\ee,\ee'})\neq 0$ (and $\ee\in S, \ee' \in S'$) if and only if $\widetilde{\ee} \in T_{\sigma} $ and $\widetilde{\ee'} \in \NV_{(\widetilde{\ee}^{\dagger},-)}(\Delta'
) \cap T_{\sigma}'$. In other words, if we write $\widetilde{T_{\widetilde{\ee}}}:= \NV_{(\widetilde{\ee}^{\dagger},-)}(\Delta'
)$, then
\[ |\NV_{\EEE_{\ee}}(S')|= | \widetilde{T_{\widetilde{\ee}}} \cap T_{\sigma}'|.\]

Finally, we take the Conditions (a), (b) and (c) into consideration. These conditions and Lemma \ref{lem adj}(iii) imply the following: 
\begin{itemize}
    \item $|\widetilde{T}_{\widetilde{\ee}_1^*}| >1$ and $|T_{\sigma}'|=| \NV_{\EEE}(S')| >1$;
    \item $\widetilde{T}_{\widetilde{\ee}_1^*}$ and $\widetilde{T}_{\widetilde{\ee}_2^*}$ are adjacent, $|\widetilde{T}_{\widetilde{\ee}_1^*} \cap T_{\sigma}'|=1 $ and $|\widetilde{T}_{\widetilde{\ee}_2^*} \cap T_{\sigma}'|>0 $.
\end{itemize}
Thus, Lemma \ref{lem key observation}(c) implies that $|\NV_{\EEE_{\ee_2^*}}(\Delta')|=|\widetilde{T}_{\widetilde{\ee}_2^*} \cap T_{\sigma}'|=2.$ This completes the proof of the proposition.
\end{proof}

\section{Extended multi-segments}\label{sec extended multi-segments}

In a series of papers (\cite{Moe06a, Moe06b, Moe09a, Moe10, Moe11a}), M{\oe}glin explicitly constructed each local Arthur packet $\Pi_{\psi}$ and showed that it is multiplicity free. In recent work (\cite{Ato20b}), 
Atobe gave a reformulation on  M{\oe}glin's construction. 
In this section, we recall certain related results, mainly from \cite{Ato20b, Ato22d}, using the notation developed in previous sections.

\subsection{Extended multi-segments and associated representations}\label{sec extended multi-segments and rep}
We start with the definition of extended multi-segments.

\begin{defn}[Extended multi-segments]\label{def multi-segment}\ 

\begin{enumerate}
\item
An \emph{extended segment} is a triple $([A,B]_\rho, l, \eta)$,
where
\begin{itemize}
\item
$[A,B]_\rho = \{\rho\lvert \cdot \rvert^A, \rho\lvert \cdot \rvert^{A-1}, \dots, \rho\lvert \cdot \rvert^B \}$ is a segment 
for an irreducible unitary supercuspidal representation $\rho$ of some $\GL_d(F)$ with $A \in \half{\Z}$; 
\item
$l \in \Z$ with $0 \leq l \leq \frac{b}{2}$, where $b = \#[A,B]_\rho = A-B+1$; 
\item
$\eta \in \{\pm1\}/E$, where $E=\{\pm 1\}$ if $b=2l$ and $E=\{+1\}$ if $b>2l$. 
\end{itemize}
\item Consider a multi-set of extended segments of the form $$\{([A_i,B_i]_{\rho},l_i,\eta_i)\}_{i \in I_{\rho}}.$$
We say that a total order $>$ on $I_{\rho}$ is admissible (or satisfies (P)) if
\[ A_i< A_j, B_i< B_j\Longrightarrow i<j. \]
We say that an admissible order $>$ satisfies ($P'$) if
\[  B_i< B_j\Longrightarrow i<j. \]

\item
An \emph{extended multi-segment} for $G_n$ is a union of multi-sets of extended segments indexed by collection of total ordered sets $(I_{\rho},>)$
\[
\EE = \cup_{\rho}\{ ([A_i,B_i]_{\rho}, l_i, \eta_i) \}_{i \in (I_\rho,>)}
\]
such that 
\begin{enumerate}
\item
$I_\rho$ is a totally ordered finite set with a fixed total order $>$ satisfies (P);

\item
$A_i + B_i \geq 0$ for all $\rho$ and $i \in I_\rho$; 

\item
as a representation of $W_F \times \SL_2(\BC) \times \SL_2(\BC)$, 
\[
\psi_{\EE} := \bigoplus_\rho \bigoplus_{i \in I_\rho} \rho \otimes S_{a_i} \otimes S_{b_i} 
\]
where $(a_i, b_i) = (A_i+B_i+1, A_i-B_i+1)$,
is a local Arthur parameter for $G_n$ of good parity.
\item The following sign condition holds
\begin{align*}
\prod_{\rho} \prod_{i \in I_\rho} (-1)^{[\frac{b_i}{2}]+l_i} \eta_i^{b_i} = 1.
\end{align*}
\end{enumerate}
\item Suppose $\psi_{\EE}=\psi_{\EE'}$. We say $\EE$ and $\EE'$ have the same admissible order if they are of the form 
\[\EE= \cup_{\rho} \{([A_i, B_i]_{\rho},l_i,\eta_i)\}_{i \in (I_{\rho},>)}, \ \EE'= \cup_{\rho} \{([A_i, B_i]_{\rho},l_i',\eta_i')\}_{i \in (I_{\rho},>)}. \]
    Namely, after identifying the total ordered index sets $(I_{\rho},>)$, the underlying segments $[A_i,B_i]_{\rho}$ are identical for all $i \in I_{\rho}$.
\end{enumerate}
\end{defn}

The extended multi-segments and local Arthur packets are related by the following theorem (\cite[Section 3]{Ato20b}).

\begin{thm}\label{thm Atobe}
    Let $\EE$ be an extended multi-segment and $\psi$ a local Arthur parameter of $G_n$.
    \begin{enumerate}
        \item  There is a representation $\pi(\EE)$ of $G_n$ associated to $\EE$, which is either irreducible or zero.
        \item Suppose $\psi= \bigoplus_{\rho} \bigoplus_{i \in I_{\rho}} \rho \otimes S_{a_i} \otimes S_{b_i}$ is a local Arthur parameter of $G_n$ of good parity. Fix an admissible order $>$ on $I_{\rho}$ for each $\rho$ that satisfies ($P'$) if $\half{a_i-b_i}<0$ for some $i \in I_{\rho}$. Then
\[ \bigoplus_{\pi \in \Pi_{\psi}} \pi= \bigoplus_{\EE} \pi(\EE),\]
where $\EE$ runs over all extended multi-segments with $\psi_{\EE}=\psi$ and $\pi(\EE) \neq 0$, and the total orders are the ones fixed above.
    \end{enumerate}
\end{thm}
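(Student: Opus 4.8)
The plan is to deduce both assertions from Atobe's explicit reformulation \cite[Section 3]{Ato20b} of M{\oe}glin's construction of local Arthur packets \cite{Moe06a, Moe06b, Moe09a, Moe10, Moe11a}. Rather than reprove these deep results, I would isolate the two inputs they supply and check that our conventions are the ones used in loc.\ cit.

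For Part (1), I would recall the inductive construction of $\pi(\EE)$. When the extended multi-segment is ``in general position'' --- when the $B_i$ are large enough that the supporting segments $[A_i,B_i]_\rho$ are pairwise far apart --- M{\oe}glin's recipe realizes $\pi(\EE)$ as the socle of an explicit standard module built from the Steinberg representations $\Delta_\rho[x,y]$ attached to the rows, with $l_i$ recording how many entries of the $i$-th row are folded into an alternating tempered piece and $\eta_i$ the resulting sign; in this range $\pi(\EE)$ is irreducible and nonzero. In general one applies a finite sequence of M{\oe}glin's elementary operations --- shifting a row, adding or removing a row, and the parameter-level duality --- each of which carries $\pi(\EE)$ to $\pi$ of the transformed extended multi-segment or to $0$. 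Atobe shows that each such step preserves the ``irreducible or zero'' dichotomy, the vanishing being governed by inequalities of the type recorded in our non-vanishing criterion (Definition \ref{def non-vanishing}); iterating yields Part (1).

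For Part (2), I would invoke M{\oe}glin's identification of the good-parity packet, $\Pi_\psi = \{\pi(\EE) \mid \psi_{\EE} = \psi,\ \pi(\EE) \neq 0\}$, together with her multiplicity-one theorem \cite{Moe11a}, which makes the right-hand side of the displayed identity multiplicity-free and hence equal to $\bigoplus_{\pi \in \Pi_{\psi}} \pi$. The one substantive point is independence of the chosen admissible order: $\pi(\EE)$ should depend only on the multi-set $\{([A_i,B_i]_\rho, l_i, \eta_i)\}$ and not on the order refining $(P)$, once $(P')$ is imposed whenever $\half{a_i-b_i} < 0$ for some $i$. I would establish this by verifying, as in \cite{Ato20b} (see also \cite[\S6]{Xu21b}), that the row-exchange operator on extended multi-segments (the analogue of $R$ in Definition \ref{def row exchange}) satisfies $\pi(\EE) = \pi(\EE')$ whenever $\EE'$ is obtained from $\EE$ by a chain of row exchanges, and that any two admissible orders compatible with the stated hypothesis are connected by such transpositions of adjacent nested rows.

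The step I expect to be most delicate is exactly this order-independence in the regime where some $\half{a_i-b_i} < 0$: there a naive row exchange can alter the representation, and one must track how $R$ interacts with the folding data $(l_i, \eta_i)$ --- this is the whole point of imposing $(P')$. Since all of this is already carried out in \cite{Ato20b, Moe11a}, the residual work is bookkeeping: matching the diagrammatic presentation of extended segments, the sign condition in Definition \ref{def multi-segment}, and the dictionary $(a_i, b_i) = (A_i + B_i + 1, A_i - B_i + 1)$ with Atobe's normalizations so that those statements transfer verbatim.
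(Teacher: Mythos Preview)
The paper does not actually prove this theorem: it is stated as a citation of \cite[Section 3]{Ato20b} (Atobe's reformulation of M{\oe}glin's construction) with no argument given. Your proposal correctly identifies the source and sketches what goes into Atobe's proof; this is more detail than the paper itself provides, so there is nothing to compare against.
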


\begin{remark}\label{rmk multiplicity one}
   M{\oe}glin's result that local Arthur packets are multiplicity-free \textup{(\cite{Moe11a})} implies the following statement: Suppose $\psi_{\EE}=\psi_{\EE'}$ and $\EE, \EE'$ have the same admissible order (Definition \ref{def multi-segment}(4)). Then $\pi(\EE)=\pi(\EE')$ if and only if $\EE=\EE'$.
\end{remark}

We remark that, in this paper, we do not need the precise computation of $\pi(\EE)$. We only need the followings, which are recalled in the following subsections. 

    \begin{itemize}
        \item A criterion on $\EE$ that $\pi(\EE)$ give nonzero representations. (See \S \ref{sec non-vanishing extended multi-segments}.)
        \item A recipe to compute the character associated to $\pi(\EE)$ in $\widehat{\mathcal{S}}_{\psi_{\EE}}$. (See \S \ref{sec character extended multi-segment}.)
        \item A formula for the decomposition of unitary induction of representations of Arthur type and good parity in terms of extended multi-segments (see \S \ref{sec unitary induction of Arthur type} for more explanation). 
    \end{itemize}

Before presenting these results, to each extended multi-segment $\EE$, we associate several admissible sequences of extended $\Z$-segments. We then extend the definitions in \S \ref{sec Z-extended segments} for extended $\Z$-segments to the setting of extended multi-segments. 

First we consider extended segments and extended $\Z$-segments.
\begin{defn}\label{def transport extended segments}\ 
    
    \begin{enumerate}
    \item Suppose $\ee= ([A,B]_{\rho}, l, \eta)$ is an extended segment. We associate an extended $\Z$-segment $\ee^{\Z}$ by
    \[ \ee^{\Z}:= ([\lfloor A\rfloor,\lfloor B\rfloor], l, \eta). \]
        \item We say two extended segments $\ee_1, \ee_2$ are adjacent if $ \ee_1^{\Z}, \ee_2^{\Z}$ are adjacent.
        \item Fixing a segment $\Delta=[A,B]_{\rho}$, we let $\Eseg_{\Delta}$ denote the set of extended segments with underlying segments $[A, B]_{\rho}$.
        
        \item Let $S=\{\ee_1,\ldots, \ee_r\} \subseteq \Eseg_{[A,B]_{\rho}}$ be a finite set of extended segments. We define $S^{\Z}:= \{\ee_1^{\Z},\ldots, \ee_{r}^{\Z}\}$, and we say $S$       
        is an interval if $ S^{\Z}$ is an interval.
    \end{enumerate}
\end{defn}

Next, we consider extended multi-segments.

\begin{defn}\label{def transport extended multi-segments}
Let $\EE=\cup_{\rho}\{ \ee_i \}_{i \in (I_{\rho},>)}$ be an extended multi-segment. We identify the total ordered set $(I_\rho,>)$ as $\{1 < \cdots < |I_{\rho}|\}.$
\begin{enumerate}
    \item For each $\rho$, we associate an admissible sequence of extended $\Z$-segments $\EE_{\rho}^{\Z}$ by
    \[ \EE_{\rho}^{\Z}:= (\ee_1^{\Z},\ldots, \ee_{|I_{\rho}|}^{\Z}).\]
    \item  Write $\ee_i= ([A_i,B_i]_{\rho},l_i,\eta_i)$.    For $1 \leq k \leq n-1$ such that $[A_k,B_k]_{\rho} \supseteq [A_{k+1},B_{k+1}]_{\rho}$ or $[A_k,B_k]_{\rho} \subseteq [A_{k+1},B_{k+1}]_{\rho}$, we define an extended multi-segment $R_{k}(\EE)$ by requiring that
    \[ (R_k(\EE))_{\rho'}^{\Z}= \begin{cases}
        \EE_{\rho'}^{\Z} & \text{ if }\rho' \not\cong \rho,\\
        R_{k}( \EE_{\rho}^{\Z}) & \text{ if }\rho'\cong \rho.
    \end{cases}\]
    \item We write $\EE\stackrel{R}{=} \EE'$ if $\EE_{\rho}^{\Z}\stackrel{R}{=} (\EE')_{\rho}^{\Z}$ for every $\rho$. We let $[\EE]:= \{\EE' \ | \ \EE' \stackrel{R}{=} \EE \}$.
    \item We define $\NV(\EE) \neq 0$ if and only if $\NV(\EE_{\rho}^{\Z}) \neq 0$ for every $\rho $.
    \item Suppose that $\psi_{\EE}+(\rho \otimes S_{a} \otimes S_b)^{\oplus 2}$ is a local Arthur parameter of good parity of the same type of group. Let $A:= \half{a+b}-1, B:= \half{a-b}$. For any $\ee \in \Eseg_{[A,B]_{\rho}}$, we define an extended multi-segment $\EE_{\ee}$ by requiring that
    \[ (\EE_{\ee})_{\rho'}^{\Z}= \begin{cases}
        \EE_{\rho'}^{\Z} & \text{ if }\rho' \not\cong \rho,\\
        (\EE_{\rho}^{\Z})_{\ee^{\Z}} & \text{ if }\rho'\cong \rho.
    \end{cases}\]
    Here $(\EE_{\rho}^{\Z})_{\ee^{\Z}}$ is the admissible sequence of extended $\Z$-segment defined in Definition \ref{def EE_ee}.
    \item To simplify notation, for an extended multi-segment $\EE$ and an interval $S \subseteq \Eseg_{[A,B]_{\rho}}$ of extended segments, we let
    \[ \NV_{\EE}(S):= \{\ee \in \Eseg_{[A,B]_{\rho}}\ | \ \ee^{\Z} \in \NV_{\EE_{\rho}^{\Z}}(S^{\Z}) \}. \]
\end{enumerate}
\end{defn}

We conclude this subsection by recalling a fact on row exchanges.

\begin{prop}[{\cite[Theorem 4.3]{Ato20b}}]\label{prop row exchanges}
    Suppose $\pi(\EE)\neq 0$ and $\psi_{\EE}=\psi_{\EE'}$. Then $\pi(\EE)=\pi(\EE')$ if and only if $\EE' \in [\EE]$.
\end{prop}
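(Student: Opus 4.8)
The plan is to derive Proposition \ref{prop row exchanges} from \cite[Theorem 4.3]{Ato20b}, which is this exact statement phrased in Atobe's original formalism; all that is needed is a faithful translation of notation. In Atobe's setting the row exchange is an operation on a pair of extended segments $([A_1,B_1]_\rho,l_1,\eta_1)$, $([A_2,B_2]_\rho,l_2,\eta_2)$ whose supports are nested, and \cite[Theorem 4.3]{Ato20b} asserts that, whenever $\pi(\EE)\neq 0$ and $\psi_{\EE}=\psi_{\EE'}$, one has $\pi(\EE)=\pi(\EE')$ if and only if $\EE'$ arises from $\EE$ by a finite chain of such row exchanges.

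The first step is to verify that our $R_k(\EE)$ from Definition \ref{def transport extended multi-segments}(2) is Atobe's row exchange applied to the adjacent pair $(\ee_k,\ee_{k+1})$. By construction $R_k(\EE)$ is computed by applying the operator $R$ of Definition \ref{def row exchange} to the extended $\Z$-segments $\ee_k^{\Z},\ee_{k+1}^{\Z}$, and that operator is copied verbatim from \cite[Section 4.2]{Ato20b}. The transport $\ee\mapsto\ee^{\Z}$ of Definition \ref{def transport extended segments} only replaces $[A,B]_\rho$ by $[\lfloor A\rfloor,\lfloor B\rfloor]$ while keeping the length $b=A-B+1$, the integer $l$, and the class $\eta\in\{\pm1\}/E$; since every formula in Definition \ref{def row exchange} depends only on $b_1,b_2,l_1,l_2$ and on $\epsilon=(-1)^{A_1-B_1}\eta_1\eta_2$, and the uniform shift $A\mapsto\lfloor A\rfloor$, $B\mapsto\lfloor B\rfloor$ leaves all of these invariant (in particular $A_1-B_1=\lfloor A_1\rfloor-\lfloor B_1\rfloor$ because $A_1-B_1\in\Z$), our $R_k$ reproduces Atobe's. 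The second step is then formal: the equivalence $\stackrel{R}{=}$ and the class $[\EE]$ of Definition \ref{def transport extended multi-segments}(3) coincide with Atobe's row-exchange equivalence, so both implications of the Proposition follow at once from \cite[Theorem 4.3]{Ato20b}.

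I expect the only genuinely delicate point — the main, but minor, obstacle — to be the bookkeeping around half-integral $A,B$ and the sign convention $\eta\in\{\pm1\}/E$: one must confirm that the boundary cases $b=2l$ in Definition \ref{def row exchange}, where $\eta$ is defined only up to a sign, are transported from \cite{Ato20b} without introducing ambiguity. This is guaranteed by Remark \ref{rmk row exchange}(1), which records that $R$ outputs genuine virtual extended $\Z$-segments (so $l_i'\le b_i/2$ always holds), together with Remark \ref{rmk row exchange}(3), which records that $R$ is an involution; with these in hand the translation is unambiguous and the Proposition is immediate.
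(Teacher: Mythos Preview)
Your proposal is correct and matches the paper's approach: the paper offers no proof beyond the citation to \cite[Theorem 4.3]{Ato20b}, so the content is exactly the notational translation you outline. Your added verification that the transport $\ee\mapsto\ee^{\Z}$ preserves all quantities entering Definition \ref{def row exchange} is a reasonable elaboration of what the paper leaves implicit.
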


\subsection{Non-vanishing criterion}\label{sec non-vanishing extended multi-segments}
Xu (\cite{Xu21b}) gave an algorithm to determine whether the representations in M{\oe}glin's construction are nonzero.
Atobe reformulated this algorithm on non-negative extended multi-segments (\cite[\S 4]{Ato20b}).
We recall this below.

\begin{thm}[{\cite[Theorems 3.7, 4.4]{Ato20b}}]\label{thm non-vanishing}
Suppose $\EE=\cup_{\rho}\{ ([A_i,B_i]_{\rho}, l_i, \eta_i)\}_{i \in (I_{\rho},>)}$ is an extended multi-segment. Then $\pi(\EE)\neq 0$ if and only if $\NV(\EE) \neq 0$ and the following condition holds for any $\rho$ and $i \in I_{\rho}$:
\begin{align}\label{eq ast}
           B_i+l_i \geq \begin{cases}  0 & \text{ if }B_i \in \Z, \\
(-1)^{\alpha_i+1}\eta_i \cdot  \half{ 1} & \text{ if } B_i \not\in \Z ,
        \end{cases}
\end{align}
        where 
        \[ \alpha_i:= \sum_{j < i }A_j+B_j+1. \]
\end{thm}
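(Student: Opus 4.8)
The plan is to deduce Theorem \ref{thm non-vanishing} as a translation of Atobe's non-vanishing criterion \cite[Theorems 3.7, 4.4]{Ato20b} --- itself a reformulation of Xu's algorithm \cite{Xu21b} --- into the language of extended $\Z$-segments developed in \S\ref{sec Z-extended segments}. The two conditions on the right-hand side, namely $\NV(\EE)\neq 0$ and the positivity condition \eqref{eq ast}, will be matched separately against the hypotheses appearing in \cite[Theorem 3.7]{Ato20b}.

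First I would treat the positivity condition \eqref{eq ast}. In M{\oe}glin's construction $\pi(\EE)$ is built by a sequence of socle and Jacquet-module operations, and a necessary condition for non-collapse is that the underlying parameter remain a genuine good-parity parameter at each stage; concretely this forces the ``negative part'' of each extended segment not to overshoot $0$. When $B_i\in\Z$ this is exactly $B_i+l_i\geq 0$; when $B_i\notin\Z$ the relevant endpoint carries a sign, and tracking which sign occupies the half-integral slot --- which depends on the parity of $\alpha_i=\sum_{j<i}(A_j+B_j+1)$ --- produces the factor $(-1)^{\alpha_i+1}\eta_i$. This is a term-by-term comparison with Atobe's condition.

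Next, for the $\NV(\EE)\neq 0$ part, the essential step is to match the pairwise criterion $\NV(\ee_1,\ee_2)\neq 0$ of Definition \ref{def non-vanishing}(2) with the pairwise non-vanishing conditions of \cite[Theorem 3.7]{Ato20b} (equivalently \cite[Lemmas 5.5, 5.6, 5.7]{Xu21b}). When $\Supp(\ee_1)\supseteq\Supp(\ee_2)$ or $\Supp(\ee_1)\subseteq\Supp(\ee_2)$ this is a case-by-case identification: the invariant $\epsilon=(-1)^{A_1-B_1}\eta_1\eta_2$ records the two sign configurations, and the inequalities $0\leq l_2-l_1\leq b_2-b_1$ (when $\epsilon=1$) and $l_1+l_2\geq b_1$ (when $\epsilon=-1$) are precisely Atobe's bounds. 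The case where neither support contains the other is Condition (a), which one reduces to the nested case by a row exchange (Remark \ref{rmk row exchange}). To pass from pairwise conditions on a single convenient representative to the assertion that $\widetilde{\NV}(\EE')\neq 0$ for \emph{every} $\EE'\in[\EE]$, I would combine: (i) row exchanges preserve $\pi(\EE)$ (Proposition \ref{prop row exchanges}), hence preserve non-vanishing; (ii) Theorem \ref{thm Xu}, which upgrades $\widetilde{\NV}$ to $\NV$ for a descending chain of supports; and (iii) Lemma \ref{lem P''}, which produces a representative $\EE^{(P'')}$ with descending supports. Chaining these gives $\pi(\EE)\neq 0\iff\pi(\EE^{(P'')})\neq 0\iff$ the pairwise conditions hold on $\EE^{(P'')}$, and together with representative-independence this is $\NV(\EE)\neq 0$, in conjunction with \eqref{eq ast}.

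The step I expect to be the main obstacle is showing that $\widetilde{\NV}(\EE')=0$ for some $\EE'\in[\EE]$ forces $\pi(\EE)=0$: a priori a row exchange could manufacture an apparent vanishing even when the original representation is nonzero, so one must know that the criterion is genuinely representative-independent. This is underwritten by the invertibility and $\NV$-compatibility of the row exchange operator (Remark \ref{rmk row exchange}(3) and Lemma \ref{lem observation row exchange}), together with the fact that the distinguished representative $\EE^{(P'')}$ really computes non-vanishing via \cite[Theorem 3.7]{Ato20b} and Theorem \ref{thm Xu}. Once these compatibilities are in place the equivalence becomes formal.
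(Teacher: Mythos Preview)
The paper does not prove Theorem \ref{thm non-vanishing}; it is stated as a direct citation of \cite[Theorems 3.7, 4.4]{Ato20b}, with the understanding that the definitions in \S\ref{sec Z-extended segments} (in particular Definition \ref{def non-vanishing}, whose three cases are explicitly modeled on \cite[Lemmas 5.5, 5.6, 5.7]{Xu21b}) have been set up so that $\NV(\EE)\neq 0$ together with \eqref{eq ast} is literally a transcription of Atobe's criterion into the paper's notation. So the correct ``proof'' here is simply: unwind the definitions and observe they match.

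Your elaboration of this translation is mostly on target, but the argument you propose is circular. You invoke Theorem \ref{thm Xu} and Lemma \ref{lem P''} to pass from a pairwise condition on one representative to the full condition $\NV(\EE)\neq 0$. In the paper, however, both of these results are proved \emph{using} Theorem \ref{thm non-vanishing} (see the proofs of Lemma \ref{lem P''} and Theorem \ref{thm Xu} in \S\ref{sec non-vanishing extended multi-segments}: each appeals directly to Theorem \ref{thm non-vanishing} to translate the combinatorial condition into nonvanishing of $\pi(\EE)$). So you cannot cite them here. The logical flow in the paper is: take Theorem \ref{thm non-vanishing} as input from \cite{Ato20b}, then derive Lemma \ref{lem P''} and Theorem \ref{thm Xu} as consequences. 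If you want to justify the translation itself, you must work directly with \cite[Theorems 3.7, 4.4]{Ato20b} and verify that the paper's $\NV$ (Definition \ref{def non-vanishing EEE}(4), which quantifies over all $\EE'\in[\EE]$) agrees with Atobe's non-vanishing criterion after the combinatorial identifications --- without appealing to the downstream lemmas.
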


\begin{remark}\label{rmk interval non-vanishing ineq}
    Fix an admissible order $>$ on $I_{\rho}$. For each $i\in I_{\rho}$, the set of extended segments $([A_i,B_i]_{\rho}, l_i ,\eta_i )$ satisfying the inequality \eqref{eq ast} is an interval.
\end{remark}

Now we prove Lemma \ref{lem P''} stated in the previous section.
\begin{proof}[Proof of Lemma \ref{lem P''}]
First, we set up several notations. Let $\EEE= (\ee_1,\ldots, \ee_n)$ be an admissible sequence of $\Z$-segments with $\ee_i=([A_i,B_i],l_i,\eta_i)$. For $t \in \Z$, we define $sh^t(\EEE):= (\ee_1^t, \ldots, \ee_n^t)$, where
\[ \ee_i^t:= ([A_i+t,B_i+t],l_i,\eta_i).\]
We say $\EEE$ is positive if $B_i >0$ for every $1 \leq i \leq n.$ The following observations are clear from the definitions.
\begin{itemize}
    \item Fixing $\EEE$, $sh^t(\EEE)$ is positive for some large enough $t$.
    \item If $R_{k}(\EEE)$ is well-defined, then so is $R_{k}(sh^t(\EEE))$ and $R_{k}(sh^t(\EEE))=sh^t(R_k(\EEE))$.
    \item We have $\widetilde{\NV}(\EEE)\neq 0$ if and only if $\widetilde{\NV}(sh^t(\EEE))\neq 0$. Moreover, ${\NV}(\EEE)\neq 0$ if and only if ${\NV}(sh^t(\EEE))\neq 0$.
    \item The map $\EEE' \mapsto sh^t(\EEE')$ is a bijection between $[\EEE]$ and $[sh^t(\EEE)]$ that sends $[\EEE]^{(P'')}$ to $[sh^t(\EEE)]^{(P'')}$.
\end{itemize}

Let $\EEE$ be an admissible sequence of $\Z$-segments such that $\NV(\EEE)\neq 0$. By the above observations, we may assume $\EEE$ is positive by replacing $\EEE$ with $sh^t(\EEE)$ for some large enough $t$ if necessary. There exists an extended multi-segment $\EE= \EE_{\rho_1} \cup \EE_{\rho_2}$ of $\Sp_{2n}(F)$ or split $\SO_{2n+1}(F)$ such that $\EE_{\rho_1}^{\Z}= \EEE$, $\NV(\EE_{\rho_2}^{\Z}) \neq 0$, and $\EE_{\rho_2}^{\Z}$ is positive. Then Theorem \ref{thm non-vanishing} implies that $\pi(\EE) \neq 0$.

Suppose $\EEE'=(\ee_1',\ldots, \ee_n')$ and $ \EEE'' =(\ee_1'',\ldots, \ee_n'')$ both lie in $ [\EEE]^{(P'')}$. By the definition of $(P'')$, we see that for every $1 \leq i \leq n$,
\begin{align}\label{eq pf of P''}
    \Supp(\ee_i')=\Supp(\ee_i'').
\end{align} 
Define extended multi-segments $\EE':=\EE_{\rho_1}' \cup \EE_{\rho_2}$ and $\EE':=\EE_{\rho_1}'' \cup \EE_{\rho_2}$ such that
\[ (\EE_{\rho}')^{\Z}=\EEE',\ (\EE_{\rho}'')^{\Z}=\EEE''. \]
Then it follows from the definition that $\EE', \EE'' \in [\EE]$ and hence $\pi(\EE')=\pi(\EE'')$ by Proposition \ref{prop row exchanges}. Therefore, by \eqref{eq pf of P''} and the fact that local Arthur packets are multiplicity-free (see Remark \ref{rmk multiplicity one}), we must have $\EE'=\EE''$. We conclude that $\EEE'=\EEE''$, which completes the proof of the lemma.
\end{proof}

Finally, we prove Theorem \ref{thm Xu}.
\begin{proof}[Proof of Theorem \ref{thm Xu}]
Write $\Supp(\ee_1)= [A, B]$ and take $t\in \Z_{>0} $ such that $B+t >0$. For any $\EEE' = (\ee_1',\ldots, \ee_n')\in [\EEE]$, where $\ee_i'= ([A_i',B_i'], l_i', \eta_i')$, we define $\overline{\EEE'}:= (\overline{\ee_1'},\ldots, \overline{\ee_n'})$ by
\[ \overline{\ee_i'}:= ([A +t, B_i' - A_i'+A+t ], l_i', \eta_i').\]
It is clear that $\overline{\EEE'}$ is also an admissible sequence of virtual extended $\Z$-segments. Moreover, this gives us a map
\begin{align*}
    [\EEE] &\to [\overline{\EEE}],\\
    \EEE' & \mapsto \overline{\EEE'},
\end{align*}
which is indeed a bijection. It is clear from Definitions \ref{def non-vanishing}, \ref{def non-vanishing EEE} that for any $\EEE' \in [\EEE]$, $\widetilde{\NV}(\EEE')\neq 0$ if and only $\widetilde{\NV}(\overline{\EEE'})\neq 0$. Hence, $\NV(\EEE)\neq 0$ if and only $\NV(\overline{\EEE'})\neq 0$.

Now we take any extended multi-segment $\EE$ such that 
\begin{enumerate}
    \item for a fixed $\rho$, we have $\EE_{\rho}^{\Z}= \overline{\EEE}$, and
    \item for $\rho' \not \cong \rho$,  either $\EE_{\rho'}$ is empty or     
    $\EE_{\rho'}= \{([A_{\rho'},B_{\rho'}]_{\rho'}, l_{\rho'}, \eta_{\rho'})\}$ is a singleton with $B_{\rho'}>0$. 
\end{enumerate}
Then Theorem \ref{thm non-vanishing} implies that $\pi(\EE) \neq 0$ if and only if $\NV(\overline{\EEE})\neq 0$. On the other hand, \cite[Theorem A.3]{Xu21a} implies that $\pi(\EE)\neq 0$ if and only if $\widetilde{NV}(\overline{\EEE}) \neq 0$. In conclusion, we have
\[ \widetilde{\NV}(\EEE)\neq 0 \Leftrightarrow \widetilde{\NV}(\overline{\EEE})\neq 0 \Leftrightarrow \pi(\EE)\neq 0 \Leftrightarrow \NV(\overline{\EEE})\neq 0  \Leftrightarrow \NV(\EEE)\neq 0 .\]
This completes the proof of the theorem.
\end{proof}

\subsection{Characters of component group}\label{sec character extended multi-segment}
Recall that there is a map
\begin{align*}
    \Pi_{\psi} &\to \widehat{\mathcal{S}}_{\psi},\\
    \pi& \mapsto \langle \cdot, \pi\rangle_{\psi}.
\end{align*}
A combinatorial description of $\widehat{\mathcal{S}}_\psi$ is recalled in \S \ref{sec character}.
Suppose $\psi$ is of good parity. Then every member of $\Pi_{\psi}$ is of the form $\pi(\EE)$ with $\psi_{\EE}=\psi$. We recall the recipe to compute $\langle \cdot, \pi(\EE)\rangle_{\psi_{\EE}}$.

\begin{defn}
Let $\EE= \cup_{\rho}\{([A_i,B_i]_{\rho}, l_i, \eta_i)\}_{i \in (I_{\rho,>})}$. We assume that the admissible orders of $\EE$ satisfy ($P'$).  Let $a_i:= A_i+B_i+1$ and $b_i:=A_i-B_i+1$. 
\begin{enumerate}
    \item For $i \in I_{\rho}$, define $Z(\EE)_{\rho,i}$ by the set of $j \in I_{\rho}$ such that
    \begin{itemize}
        \item $b_i \not\equiv b_j \mod 2$ (so that $a_i \not\equiv a_j \mod 2$);
        \item $j>i$ and $a_j <a_i$ or $j<i$ and $a_j>a_i$;
        \item $b_j \in 2 \Z $ and $b_j >b_i$, or  $b_j \not\in 2 \Z $ and $b_i >b_j$.
    \end{itemize}
    \item We define the character $\eta_{\EE} \in \widehat{\mathcal{S}}_{\psi_{\EE}}$ by 
    \begin{align}\label{eq character}
        \eta_{\EE}( \rho\otimes S_{a_i} \otimes S_{b_i}):= (-1)^{|Z(\EE)_{\rho,i}|+ \lfloor \half{b_i} \rfloor + l_i}\eta_i^{b_i}.
    \end{align}
\end{enumerate}
\end{defn}

The following theorem gives the recipe to compute $\langle \cdot, \pi(\EE)\rangle_{\psi_{\EE}}$.

\begin{thm}[{\cite[Theorem 3.6]{Ato20b}}]
    Suppose that $\pi(\EE) \neq 0$. Then $\langle \cdot, \pi(\EE)\rangle_{\psi_{\EE}}= \eta_{\EE}.$
\end{thm}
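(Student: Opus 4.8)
\emph{Sketch of proof \textup{(}following \cite{Ato20b}\textup{)}.} The plan is to run an induction along M{\oe}glin's explicit construction of the packet $\Pi_{\psi_\EE}$, as reformulated by Atobe, tracking the endoscopic character $\langle\cdot,-\rangle$ at every elementary reduction. As a preliminary normalisation, note that $\pi(\EE)$ depends only on the row-exchange class $[\EE]$ by Proposition \ref{prop row exchanges}, whereas $\langle\cdot,\pi(\EE)\rangle_{\psi_\EE}$ is intrinsic to the representation; hence it suffices to first verify that the combinatorial character $\eta_\EE$ of \eqref{eq character} is itself invariant under row exchanges and under any change of admissible order satisfying $(P')$, and that it satisfies the two conditions in \S\ref{sec character} so that it genuinely defines an element of $\widehat{\mathcal S}_{\psi_\EE}$. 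This is a finite sign computation using the formulas of Definition \ref{def row exchange} and the definition of the crossing set $Z(\EE)_{\rho,i}$. One then inducts on $N=\dim\psi_\EE$.

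For the inductive step I would peel off one chosen row, typically the one carrying the largest $A_i$, by the basic moves of M{\oe}glin's construction: the shift $[A_i,B_i]\mapsto[A_i-1,B_i-1]$; building up the defect $l_i$ one unit at a time as the socle of an induction by a Steinberg-type segment; the partial Aubert involution; and the Jacquet modules along maximal parabolics. Each move relates $\pi(\EE)$ to $\pi(\EE')$ for a strictly smaller extended multi-segment $\EE'$, and Arthur's twisted endoscopic character identities — in their form compatible with parabolic induction and with the standard-module filtration — express $\langle\cdot,\pi(\EE)\rangle_{\psi_\EE}$ in terms of $\langle\cdot,\pi(\EE')\rangle_{\psi_{\EE'}}$ up to an explicit sign. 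One then matches these signs with the three contributions in \eqref{eq character}: the shift changes neither side; incrementing $l_i$ multiplies the character by $-1$, which accounts for the exponent $l_i$ together with the parity of $\eta_i^{b_i}$; and moving $B_i$ across the other rows produces exactly one sign per row counted by $Z(\EE)_{\rho,i}$. The base case is the tempered one, where all $b_i=1$ (so $l_i=0$ and $Z(\EE)_{\rho,i}=\emptyset$), and \eqref{eq character} reduces to $\eta_\EE=\eta$, which is M{\oe}glin's and Arthur's labelling of tempered $L$-packets of classical groups.

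The main obstacle is the sign bookkeeping in the inductive step — in particular, showing that the total accumulated sign is independent of the order in which the rows are peeled off; this is exactly where the row-exchange invariance of $\eta_\EE$ established at the outset, together with Atobe's non-vanishing criterion (Theorem \ref{thm non-vanishing}), are needed to reorganise the reductions into a canonical sequence. A secondary difficulty is the half-integral case $B_i\notin\Z$, where the boundary inequality \eqref{eq ast} and its auxiliary sign $(-1)^{\alpha_i+1}$ interact with the value of $\eta_i$ at the end of the segment; resolving this requires M{\oe}glin's analysis of the cuspidal support and of how $\eta$ transforms under the elementary moves near the boundary. Finally, since local Arthur packets are multiplicity free (Remark \ref{rmk multiplicity one}), matching the character on each nonzero $\pi(\EE)$ determines it unambiguously, so no additional injectivity argument is needed.
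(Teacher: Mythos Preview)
The paper does not prove this theorem at all: it is stated as a citation of \cite[Theorem 3.6]{Ato20b} and used as a black box, with no argument given. There is therefore nothing in the paper to compare your proposal against. Your sketch is a plausible outline of how the result is obtained in Atobe's paper (induction along M{\oe}glin's construction, tracking signs through shifts, socle extractions, and partial Aubert duality, with the tempered case as base), but since the present paper treats the statement as an imported result, your write-up goes well beyond what is required here. If the intent is simply to supply the paper's ``proof'' of this statement, the correct response is that the paper gives none and defers entirely to \cite{Ato20b}.
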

\begin{remark}\label{rmk character}\ 
\begin{enumerate}
    \item The definition of $Z(\EE)_{\rho,i}$ only depends on $\psi_{\EE}$ and the collection of admissible orders on each $I_{\rho}$.
    \item Suppose $\psi_{\EE}=\psi_{\EE'}$ and $\EE, \EE'$ have the same admissible order (Definition \ref{def multi-segment}(4)). Let $\ee_i, \ee_i'$ be the $i$-th extended $\Z$-segment in $\EE_{\rho}^{\Z}$ and $(\EE')_{\rho}^{\Z}$ respectively. An immediate consequence of \eqref{eq character} is that if $\ee_i$ is adjacent to $\ee_{i}'$, then
    \[ \eta_{\EE}(\rho\otimes S_{a_i}\otimes S_{b_i})= -\eta_{\EE'}(\rho\otimes S_{a_i}\otimes S_{b_i}).\]
    This is the main observation that leads us to the definitions for adjacent and interval in \S \ref{sec Z-extended segments}.
\end{enumerate}
\end{remark}

\subsection{Unitary induction of Arthur type}\label{sec unitary induction of Arthur type}
We recall the setting in \S \ref{sec-LIR}, and the description of the decomposition of 
\[ u_{\rho}(a,b) \rtimes \pi_0\]
for any representation $\pi_0$ of Arthur type and of good parity.

Let $\psi_0$ be a local Arthur parameter of good parity, and consider 
\[ \psi:= \psi_0 + (\rho\otimes S_{a} \otimes S_{b})^{\oplus 2}.\]
Recall that there is an injection between the component groups, which induces a surjection between their Pontryagin duals via restriction:
\[ \iota:\mathcal{S}_{\psi_0} \hookrightarrow   \mathcal{S}_{\psi},\ \ \ \iota^{\ast}:  \widehat{\mathcal{S}}_{\psi}  \twoheadrightarrow \widehat{\mathcal{S}}_{\psi_0}.\]
Note that the above maps are isomorphisms unless $\rho\otimes S_{a}\otimes S_{b}$ is of good parity and $\psi_0$ does not contain any summand isomorphic to $\rho\otimes S_{a}\otimes S_{b}$.

Fixing $\varepsilon_0 \in  \widehat{\mathcal{S}}_{\psi_0},$ it is known from \eqref{eq unitary induction Arthur} that
\begin{align*}
  \bigoplus_{\substack{\pi_{0} \in \Pi_{\psi_{0}},\\ \langle \cdot, \pi_{0} \rangle_{\psi}= \varepsilon_{0}}} u_{\rho}(a,b) \rtimes \pi_{0}= \bigoplus_{\substack{\pi \in \Pi_{\psi}, \\ \iota^{\ast} (\langle \cdot, \pi \rangle_{\psi})= \varepsilon_{0} }} \pi.
\end{align*}
 When $\psi_0$ is of good parity, every $\pi_0 \in \Pi_{\psi_0}$ is of the form $\pi_0= \pi(\EE)$ for some extended multi-segment $\EE$. Then we may describe the decomposition of the unitary induction $u_{\rho}(a,b) \rtimes \pi(\EE)$ explicitly in terms of the combinatorial data of $\EE$.

\begin{thm}[{\cite[Theorem 4.4]{Ato22d}}]\label{thm Atobe unitary induction}
    Suppose that $\pi(\EE) \neq 0$ and $\psi_{\EE}+ (\rho\otimes S_a \otimes S_b)^{\oplus 2}$ is of good parity. Let $A:=\half{a+b}-1$ and $B:=\half{a-b}.$    Then,
    \begin{align}\label{eq unitary induction Atobe}
        u_{\rho}(a,b) \rtimes \pi(\EE) = \bigoplus_{\ee \in \Eseg_{[A,B]_{\rho}}} \pi(\EE_{\ee}).
    \end{align}
   See Definition \ref{def transport extended multi-segments}(5) for the definition of $\EE_{\ee}$.
\end{thm}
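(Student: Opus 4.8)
This is \cite[Theorem~4.4]{Ato22d}; I sketch below how a proof would go. The plan is to bootstrap from the packet-level identity \eqref{eq unitary induction Arthur} (which itself rests on Arthur's twisted endoscopic character relations and M{\oe}glin's multiplicity-one theorem). From \eqref{eq unitary induction Arthur} together with multiplicity-freeness of local Arthur packets one reads off that $u_{\rho}(a,b)\rtimes\pi(\EE)$ is itself multiplicity-free, that each of its irreducible constituents is of the form $\pi(\EE')$ with $\psi_{\EE'}=\psi$, and that every such $\EE'$ satisfies $\iota^{\ast}\langle\cdot,\pi(\EE')\rangle_{\psi}=\langle\cdot,\pi(\EE)\rangle_{\psi_0}$. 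Three routine preliminary checks remain: (1) $\EE_{\ee}$ is a well-defined extended multi-segment — its $\rho'$-parts for $\rho'\not\cong\rho$ equal those of $\EE$, and its $\rho$-part is $\EE_{\rho}^{\Z}$ with the pair $(\ee^{\Z},(\ee^{\Z})^{\dagger})$ inserted at the $(P')$-position of Definition~\ref{def EE_ee}; (2) the map $\ee\mapsto\pi(\EE_{\ee})$ is injective on $\{\ee \mid \pi(\EE_{\ee})\neq 0\}$, which is immediate from Remark~\ref{rmk multiplicity one} since all the $\EE_{\ee}$ share the same admissible order; (3) $\iota^{\ast}\eta_{\EE_{\ee}}=\eta_{\EE}$ for every $\ee$, which follows from the character recipe \eqref{eq character}, since inserting two extended segments of common support $[A,B]_{\rho}$ as consecutive entries changes each $|Z(\EE_{\ee})_{\rho,i}|$ (for an old index $i$) by $0$ or $2$ and each $\alpha_i$ in \eqref{eq ast} by $0$ or $2(A+B+1)$, leaving parities — hence the character value on old summands — unchanged. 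After these checks the theorem reduces to: (A) every nonzero $\pi(\EE_{\ee})$ occurs in $u_{\rho}(a,b)\rtimes\pi(\EE)$, and (B) no other irreducible occurs.

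For (A) I would induct on $b$. The base cases $b=0$ ($u_{\rho}(a,0)$ trivial, nothing to prove) and $b=1$ ($u_{\rho}(a,1)$ a Steinberg representation, where the claimed two-term decomposition of $u_{\rho}(a,1)\rtimes\pi(\EE)$ — adjoining a pair of self-dual summands $\rho\otimes S_a\otimes S_1$ — is part of M{\oe}glin's explicit construction of local Arthur packets) are handled directly. For the inductive step one uses the standard socle realization $u_{\rho}(a,b)=\mathrm{soc}\bigl(\Delta_{\rho}[B,-A]\times u_{\rho}(a,b-2)\times\Delta_{\rho}[A,-B]\bigr)$, induces it up to $G_n$, plugs in the already-known decomposition of $u_{\rho}(a,b-2)\rtimes\pi(\EE)$, and identifies the constituents through the Langlands/M{\oe}glin machinery; the combinatorial framework of \S\ref{sec Z-extended segments} — the row-exchange operator, the sets $\NV_{(\ee,-)}(\Delta')$, Lemma~\ref{lem adj}, and Proposition~\ref{prop NV segment} — is exactly the bookkeeping that shows the extended $\Z$-segments produced after peeling off the two outer Steinberg factors sweep out all of $\Eseg_{[A,B]_{\rho}}$. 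Aubert duality, which sends $u_{\rho}(a,b)\rtimes\pi(\EE)$ to $u_{\rho}(b,a)\rtimes\pi(\widehat{\EE})$ and thereby exchanges the roles of $a$ and $b$, lets one assume $a\ge b$ and simplifies the non-vanishing combinatorics.

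For (B) I would compute the Jacquet module of $u_{\rho}(a,b)\rtimes\pi(\EE)$ along the maximal parabolic whose Levi carries the $\GL$-factor $u_{\rho}(a,b)$, using the geometric lemma and the known (M{\oe}glin) structure of the Jacquet modules of $\pi(\EE)$ and of the Speh representation $u_{\rho}(a,b)$. Granting that a constituent equals some $\pi(\EE')$ with $\psi_{\EE'}=\psi$: first, because $u_{\rho}(a,b)$ has trivial $\rho'$-Jacquet module for every $\rho'\not\cong\rho$, the $\rho'$-parts of $\EE'$ must coincide (up to row exchange) with those of $\EE$; second, the two extra extended segments of $\EE'_{\rho}$, both supported on $[A,B]_{\rho}$, can be brought adjacent by row exchanges (Proposition~\ref{prop row exchanges}), after which $\NV(\EE')\neq 0$ forces the second to be the $\dagger$-mate of the first by Remark~\ref{rmk non-vanishing}(2); third, Lemma~\ref{lem e dagger}(3) lets one slide this $\dagger$-pair to the canonical position of Definition~\ref{def EE_ee}, so that $\EE'$ is row-exchange-equivalent to some $\EE_{\ee}$ and hence $\pi(\EE')=\pi(\EE_{\ee})$. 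Combining (A), (B), injectivity and multiplicity-freeness yields \eqref{eq unitary induction Atobe}.

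The main obstacle is (B). The identity \eqref{eq unitary induction Arthur} controls $u_{\rho}(a,b)\rtimes\pi_0$ only after summing over all $\pi_0\in\Pi_{\psi_0}$ sharing a fixed character, and there are in general several such $\pi_0$, so it does not by itself isolate the constituents of $u_{\rho}(a,b)\rtimes\pi(\EE)$; one genuinely needs the Jacquet-module analysis (or, as in \cite{Ato22d}, a delicate induction interweaving M{\oe}glin's construction with parabolic induction). The subtle point throughout is the overlap of the support $[A,B]_{\rho}$ of the inserted block with supports already occurring in $\EE$ — exactly the configuration the ``interval'' and ``adjacent'' combinatorics of \S\ref{sec Z-extended segments} is built to handle.
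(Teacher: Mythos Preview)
The paper does not prove this theorem; it is quoted verbatim from \cite[Theorem~4.4]{Ato22d} and used as a black box. There is therefore nothing in the paper to compare your sketch against. Your acknowledgment that ``this is \cite[Theorem~4.4]{Ato22d}'' is the entirety of what the paper itself supplies.

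A brief comment on your sketch nonetheless. Your outline is plausible in broad strokes, but you should be aware that the combinatorial apparatus of \S\ref{sec Z-extended segments} (intervals, adjacency, $\NV_{\EE}(S)$, Proposition~\ref{prop NV segment}) is developed in \emph{this} paper, postdating Atobe's result; invoking it to justify Atobe's theorem is anachronistic, though not circular --- I checked that none of the results in \S\ref{sec Z-extended segments} depend on Theorem~\ref{thm Atobe unitary induction}. Atobe's own argument in \cite{Ato22d} proceeds differently, via a direct analysis of socles and derivatives rather than the interval formalism. Also, your step (B) is underspecified: the claim that the two inserted extended segments ``can be brought adjacent by row exchanges'' and then forced to be a $\dagger$-pair requires knowing in advance that the constituent is some $\pi(\EE')$ with the correct support multiset, and extracting that from a Jacquet-module computation alone is more delicate than you indicate --- one must rule out constituents whose extended multi-segments have the right $\psi$ but the two new $[A,B]_\rho$-rows interleaved nontrivially with existing rows in a way not reachable from any $\EE_{\ee}$ by row exchange. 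That is precisely the content Atobe establishes, and your sketch does not supply it.
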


\begin{remark}\label{rmk non-empty Atobe}
Note that $\EE_{\ee}$ satisfies ($P'$) by definition. By Theorem \ref{thm non-vanishing}, $\pi(\EE_{\ee})\neq 0$ if and only if $\ee \in \NV_{\EE}(S_{\EE})$, where $S_{\EE} \subseteq \Eseg_{[A,B]_{\rho}}$ is the interval given by the inequality \eqref{eq ast} for $\ee$ inside $\EE_{\ee}$ (see Remark \ref{rmk interval non-vanishing ineq}). A non-trivial implication of the above theorem is that under this setting, the interval $\NV_{\EE}(S_{\EE})$ is always non-empty.
\end{remark}

\section{Proof of the main theorem}\label{sec-proof}

In this section, we prove our main result Theorem \ref{thm A+,u}. First, we recall the setting as follows. 
Let $\pi \in \Pi_{A+}(G_n)$ and write
\begin{align}\label{eq decomp A+ pi}
    \pi= \bigtimes_{i \in I_{nu} } u_{\rho_i}(a_i, b_i) \lvert \cdot \rvert^{x_i} \rtimes \pi_A,
\end{align}
where $ 0 < x_i < \half{1}$ for any $i \in I_{nu}$ and $\pi_A \in \Pi_{A}(G_n)$. We define the subsets 
\[ \Sigma_{A+, \, u}(G_n) \subseteq \Sigma_{A+, \, H}(G_n) \subseteq \Pi_{A+}(G_n)\]
as follows.
\begin{defn}\  
    \begin{enumerate}
        \item We define  $\Sigma_{A+, \, H}(G_n)$ to be the set of $\pi\in \Pi_{A+}(G_n)$, given as in \eqref{eq decomp A+ pi}, such that  
        \[\bigtimes_{i \in I_{nu} } u_{\rho_i}(a_i, b_i) \lvert \cdot \rvert^{x_i} \times u_{\rho_i^{\vee}}(a_i, b_i) \lvert \cdot \rvert^{-x_i} \]
        is a unitary representation of a general linear group. Namely, the corresponding local Arthur parameter $\psi \in \Psi^{+}_{1/2}(G_n)$ lies in $\Psi^{+}_{\mathrm{unit}}(G_n)$.
        \item We define $\Sigma_{A+, \, u}(G_n)$ to be the set of $\pi\in \Pi_{A+}(G_n)$, given as in \eqref{eq decomp A+ pi}, such that $\pi \in \Sigma_{A+, \, H}(G_n)$ and for any $i \in I_{nu}$ such that $u_{\rho_i}(a_i,b_i)\rtimes \pi_A$ is reducible, the set 
\[ \{j \in I_{nu}\ | \ \rho_j \cong \rho_i, a_j=a_i,\ b_j=b_i\}\]
has even cardinality.
    \end{enumerate}
\end{defn}

\begin{remark}
An algorithm to determine whether $u_{\rho_i}(a_i,b_i)\rtimes \pi_A$ is reducible or not is stated in \cite[\S 5]{Ato22d}. We recall this algorithm now. By Theorem \ref{thm red from nu to gp}, we may write
    \[ \pi_A= \tau_{bp} \rtimes \pi_{gp},\]
    where $\pi_{gp}= \pi(\EE)$ is of good parity. Then $u_{\rho_i}(a_i,b_i)\rtimes \pi_A$ is reducible if and only if $u_{\rho_i}(a_i,b_i)\rtimes \pi(\EE)$ is reducible, which is equivalent to 
    \begin{enumerate}
        \item $ \psi_{\EE}+ (\rho_i \otimes S_{a_i} \otimes S_{b_i})^{\oplus 2}$ is of good parity, and
        \item $| \{\ee \in \Eseg_{[A_i,B_i]_{\rho_i}}\ | \ \pi(\EE_{\ee})\neq 0 \} | >1$, where $A_i= \half{a_i-b_i}-1$ and $B_i= \half{a_i+b_i}$,
    \end{enumerate}
    by Theorem \ref{thm Atobe unitary induction}.
\end{remark}
Here is the main result of this section.
\begin{thm}\label{thm main}
Let $\pi \in \Pi_{A+}(G_n)$. 
\begin{enumerate}
    \item The representation $\pi$ is Hermitian if and only if $\pi \in \Sigma_{A+, \, H}(G_n)$.
        \item The representation $\pi$ is unitary if and only if $\pi \in \Sigma_{A+, \, u}(G_n)$, that is,
        $$\Pi_{A+,\, u}(G_n)=\Sigma_{A+, \, u}(G_n).$$
        Hence, Theorem \ref{thm A+,u} holds. 
\end{enumerate}
\end{thm}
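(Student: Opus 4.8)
The proof of Theorem~\ref{thm main} proceeds in two stages: first reduce the general case of $|I_{nu}|$ arbitrary to the base case $|I_{nu}|=1$ (Corollary~\ref{cor base case non-unitary}), and then settle the base case by proving Theorem~\ref{thm base case reducible components}. For Part~(1), recall that $\pi$ is Hermitian precisely when $\pi \cong \overline{\pi}^\vee$; writing $\pi$ as in \eqref{eq decomp A+ pi}, the Hermitian condition on $\pi$ forces the data $\{(\rho_i,a_i,b_i,x_i)\}_{i\in I_{nu}}$ to be stable under $(\rho,a,b,x)\mapsto(\rho^\vee,a,b,x)$, which is exactly the content of the duality map defining $\Psi^+_{\mathrm{unit}}(G_n)$, i.e. $\pi\in\Sigma_{A+,H}(G_n)$; conversely, for such $\pi$ one exhibits an explicit invariant Hermitian form via the standard intertwining operator, so Part~(1) is essentially formal given the classification of the unitary/Hermitian dual of $\GL_N$ in \cite{Tad86}. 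For Part~(2), the ``only if'' direction is the substantive one: assuming $\pi$ is unitary one must show that whenever $u_{\rho_i}(a_i,b_i)\rtimes\pi_A$ is reducible, the multiset $\{j: \rho_j\cong\rho_i,\ a_j=a_i,\ b_j=b_i\}$ has even cardinality.

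The ``if'' direction of Part~(2) follows by a deformation/continuity argument: when the relevant multiset has even cardinality, one can pair up the summands with equal data and opposite signs of $x$, and realize $\pi$ as a unitary parabolic induction from a representation of Arthur type on a smaller group (using Theorem~\ref{thm red from nu to gp} and the fact that $u_{\rho}(a,b)|\cdot|^{x}\times u_{\rho}(a,b)|\cdot|^{-x}\rtimes\sigma$ is irreducible and unitarily induced from $u_\rho(a,b)\rtimes\sigma$ along a path where irreducibility is preserved). For the ``only if'' direction, I would argue by induction on $|I_{nu}|$. Suppose some $i\in I_{nu}$ has $u_{\rho_i}(a_i,b_i)\rtimes\pi_A$ reducible while the corresponding multiset has odd cardinality. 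Group the factors in \eqref{eq decomp A+ pi} so as to isolate one such factor, collapse all the other factors with the \emph{same} $(\rho_i,a_i,b_i)$ in pairs into $\pi_A'$ of Arthur type on a smaller group (even cardinality minus one leaves one, so after pairing we are reduced to a single leftover factor over a base $\pi_A'$ that is still of Arthur type and of good parity after the good-parity reduction), and apply Corollary~\ref{cor base case non-unitary}: the single leftover induction $u_{\rho_i}(a_i,b_i)|\cdot|^{x_i}\rtimes\pi_A'$ is then non-unitary for $0<x_i<\frac12$. To promote non-unitarity of this sub-induction to non-unitarity of the full $\pi$, one uses the standard fact that unitary parabolic induction preserves unitarity together with the multiplicity-freeness of local Arthur packets (\cite{Moe11a}) and Theorem~\ref{thm red from nu to gp}, so that a non-unitary ``core'' cannot be erased by inducing further.

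The main obstacle, and the technical heart of the paper, is Theorem~\ref{thm base case reducible components}: given $\Pi_0 = u_\rho(a,b)\rtimes\pi_0$ reducible (with $\pi_0\in\Pi_{\psi_0}$, $\psi_0$ of good parity), produce two constituents $\pi_1,\pi_2\leq\Pi_0$ with $\langle s_u,\pi_1\rangle_\psi = -\langle s_u,\pi_2\rangle_\psi$. Here the plan is: write $\pi_0 = \pi(\EE)$ via Theorem~\ref{thm Atobe}, use Atobe's decomposition \eqref{eq unitary induction Atobe} to identify the constituents of $\Pi_0$ with $\{\pi(\EE_\ee) : \ee\in\NV_\EE(S_\EE)\}$ (Theorem~\ref{thm Atobe unitary induction} and Remark~\ref{rmk non-empty Atobe}), so reducibility means $|\NV_\EE(S_\EE)|>1$; then invoke Proposition~\ref{prop NV segment}(1) to see that $\NV_\EE(S_\EE)$ is an interval, pick two \emph{adjacent} members $\ee_1,\ee_2$ of it, and use Remark~\ref{rmk character}(2) — which says that adjacent extended $\Z$-segments flip exactly one coordinate of the character — to conclude $\eta_{\EE_{\ee_1}}$ and $\eta_{\EE_{\ee_2}}$ differ precisely at the $s_u$-coordinate, hence $\langle s_u,\pi(\EE_{\ee_1})\rangle_\psi = -\langle s_u,\pi(\EE_{\ee_2})\rangle_\psi$. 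The subtlety is bookkeeping the sign at the summand $\rho\otimes S_a\otimes S_b$ that $s_u$ detects versus the other coordinates of $\eta_{\EE_\ee}$; one must check that the flip induced by adjacency is localized at exactly that summand and does not cancel against a change in the $Z(\EE_\ee)$-terms, and that the even/odd cardinality condition in $\Sigma_{A+,u}$ is precisely what governs whether the signs $c(\pi)=\langle s_u,\pi\rangle_\psi$ sum to zero over a cancelling pair or survive. Once Theorem~\ref{thm base case reducible components} is in hand, Corollary~\ref{cor base case non-unitary} gives the base case, and the induction above completes the proof of Theorem~\ref{thm main}, hence of Theorem~\ref{thm A+,u}.
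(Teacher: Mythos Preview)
Your plan for the base case (Theorem~\ref{thm base case reducible components}) is correct and matches the paper's Theorem~\ref{thm reducibility unitary induction}: write the constituents of $\Pi_0$ as $\{\pi(\EE_\ee):\ee\in\NV_\EE(S_\EE)\}$, use Proposition~\ref{prop NV segment}(1) to see this set is an interval, and then Remark~\ref{rmk character}(2) gives the sign flip at adjacent members. Your worry about the $Z(\EE_\ee)$-terms is unfounded: those depend only on the supports and the admissible order, which are unchanged when $\ee$ is replaced by an adjacent $\ee'$.

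However, the reduction in Part~(2) from general $I_{nu}$ to the base case has a genuine gap. You write that once the single leftover induction $u_{\rho_i}(a_i,b_i)\lvert\cdot\rvert^{x_i}\rtimes\pi_A'$ is shown to be non-unitary, one can ``promote'' this to non-unitarity of the full $\pi$ because ``a non-unitary core cannot be erased by inducing further.'' This is not a standard fact: unitary parabolic reduction (Lemma~\ref{lem parabolic reduction}(c)) requires the inducing factor on the $\GL$-side to be Hermitian, but the remaining product $\bigtimes_{j\neq i}u_{\rho_j}(a_j,b_j)\lvert\cdot\rvert^{x_j}$ with all $x_j>0$ is not. If instead you deform those $x_j$ to $0$ and use the limit-of-complementary-series argument (Lemma~\ref{lem parabolic reduction}(b)), you land at a reducible endpoint and must choose an irreducible subquotient $\pi_A''$; to re-apply Corollary~\ref{cor base case non-unitary} you then need $u_{\rho_i}(a_i,b_i)\rtimes\pi_A''$ to still be reducible, and this is not automatic.

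This is precisely the content of the paper's Theorem~\ref{thm reduction}: given two distinct good-parity summands with $u_{\rho_1}(a_1,b_1)\rtimes\pi_0$ and $u_{\rho_2}(a_2,b_2)\rtimes\pi_0$ both reducible, there exists a subquotient $\pi_1$ of the first such that $u_{\rho_2}(a_2,b_2)\rtimes\pi_1$ remains reducible. Its proof uses Proposition~\ref{prop NV segment}(2) --- the quantitative statement about how the size of the non-vanishing interval jumps from $1$ to $2$ when one passes to an adjacent extended segment --- and this is the second substantive combinatorial input of the paper, which your outline omits entirely. The paper then inducts on the number $s$ of ``bad'' indices (after reducing to the weakly-real, good-parity, and multiplicity-free situations), peeling off one factor at a time via Theorem~\ref{thm reduction} and Lemma~\ref{lem parabolic reduction}(b). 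Your ``if'' direction has an analogous gap: one needs Corollary~\ref{cor irreducible} (itself a consequence of Theorem~\ref{thm reducibility unitary induction}) to know that if each individual $u_{\rho_i}(a_i,b_i)\rtimes\pi(\EE)$ is irreducible then so is the full product, before one can deform all $x_i$ to $0$ along an irreducible family.
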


\subsection{\texorpdfstring{Tadi{\'c}'s result on weakly real representations}{}}
 We recall several notations and results from \cite{Tad09a}. Let 
 \begin{align*}
     \mathcal{C}_n&:= \{ \rho \in \Pi(\GL_n(F))\ | \ \rho \text{ is supercuspidal}\},\\
     \mathcal{C}_n^u&:= \{ \rho \in \Pi(\GL_n(F))\ | \ \rho \text{ is supercuspidal and unitary}\},\\
     \mathcal{C}_n^{sd}&:= \{ \rho \in \Pi(\GL_n(F))\ | \ \rho \text{ is supercuspidal and self-dual}\},
 \end{align*}
 and let 
\[ \mathcal{C}:= \bigsqcup_{n \geq 0} \mathcal{C}_n,\ \ \mathcal{C}^u:= \bigsqcup_{n \geq 0} \mathcal{C}_n^u,\ \ \mathcal{C}^{sd}:= \bigsqcup_{n \geq 0} \mathcal{C}_n^{sd}. \]
Note that $\mathcal{C} \supset \mathcal{C}^{u} \supset \mathcal{C}^{sd}.$ Also, there is a bijection
\begin{align*}
    \mathcal{C} & \to \mathcal{C}^{u} \times \R, \\
     \rho & \mapsto (\rho^u, x)
\end{align*}
such that $\rho \cong \rho^u \lvert \cdot \rvert^x$.
Throughout this section, we fix a subset $\mathcal{C}' \subseteq \mathcal{C}^{u}$ such that $\mathcal{C}' \cap (\mathcal{C}')^{\vee}= \emptyset$ 
and 
\[ \mathcal{C}^{u} \setminus \mathcal{C}^{sd}=  \mathcal{C}' \sqcup (\mathcal{C}')^{\vee},\]
where
\[ (\mathcal{C}')^{\vee}:= \{\rho \in \mathcal{C}\ | \ \rho^{\vee} \in \mathcal{C}'\}.\]

 Suppose $\pi\in \Pi(G_n)$ and 
\[ \pi \leq \rho_1 \lvert \cdot \rvert^{x_1} \times \cdots \times \rho_f \lvert \cdot \rvert^{x_f} \rtimes \pi_{sc},\]
where $\rho_i$'s are unitary supercuspidal representations of general linear groups, $x_i \in \R$ and $\pi_{sc}$ is supercuspidal. Then the representation $\pi$ is called \emph{weakly real} if $\rho_i \cong \rho_i^{\vee}$ for any $1 \leq i \leq f$. 

\begin{thm}[{\cite[Theorem 4.2]{Tad09a}}]\label{thm Tadic weakly real}
    Any irreducible representation $\pi$ can be uniquely written as an irreducible parabolic induction
\[ \pi = \theta(\pi) \rtimes X_{wr}(\pi), \]
where
\begin{itemize}
    \item $X_{wr}(\pi)$ is weakly real, and
    \item if we write 
    \[ \theta(\pi) \leq \rho_1' \lvert \cdot \rvert^{x_1'} \times \cdots \times \rho_k' \lvert \cdot \rvert^{x_k'}\]
    where $\rho_i'$ are unitary supercuspidal and $x_i' \in \R$, then $\rho_i' \in \mathcal{C}'$ for any $1 \leq i \leq k$.
\end{itemize}
 Moreover, $\pi$ is unitary (resp. Hermitian) if and only if both $\theta(\pi)$ and $X_{wr}(\pi)$ are unitary (resp. Hermitian).
\end{thm}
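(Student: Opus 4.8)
The plan is to deduce this from the behaviour of the supercuspidal support together with the irreducibility of parabolic induction between ``non‑interacting'' inertial blocks; this is precisely the content of \cite[Theorem 4.2]{Tad09a}, and I would organize the argument as follows. First I would write the supercuspidal support of $\pi$ as a multiset $\{\rho_1\lvert\cdot\rvert^{x_1},\dots,\rho_f\lvert\cdot\rvert^{x_f}\}$ of unitary supercuspidal representations of general linear factors, together with the support of a supercuspidal representation $\pi_{sc}$ of a smaller $G_{n_0}$, and record that this multiset is stable under $\rho\lvert\cdot\rvert^x\mapsto\rho^{\vee}\lvert\cdot\rvert^{-x}$. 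I would then partition it into the \emph{$\mathcal{C}'$-part}, consisting of those $\rho_i\lvert\cdot\rvert^{x_i}$ with $\rho_i\in\mathcal{C}'$ together with their $\vee$-mirrors (which have $\rho_i^{\vee}\in(\mathcal{C}')^{\vee}$), and the \emph{weakly real part}, consisting of the remaining $\rho_i\lvert\cdot\rvert^{x_i}$ (necessarily with $\rho_i$ self-dual) together with $\supp(\pi_{sc})$. The structural input is that $\mathcal{C}'$, $(\mathcal{C}')^{\vee}$ and $\mathcal{C}^{sd}$ are pairwise disjoint, so no member of the $\mathcal{C}'$-part is linked, in the sense governing reducibility of parabolic induction, to any member of the weakly real part.

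\textbf{Existence.} Using Jacquet modules and the geometric lemma I would place $\pi$ inside $\Ind_P^{G_n}(\rho_1'\boxtimes\cdots\boxtimes\rho_k'\boxtimes\sigma)$, where $\rho_1',\dots,\rho_k'$ are the members of the $\mathcal{C}'$-half (each an unramified twist of an element of $\mathcal{C}'$) and $\sigma$ is supported on the weakly real part. Since $\rho_i'$ is not self-dual, each $\rho_i'\lvert\cdot\rvert^{x_i'}\rtimes(\text{classical part})$ is irreducible; since distinct inertial classes are never linked, each $\rho_i'\lvert\cdot\rvert^{x_i'}\times(\text{GL part of different inertial class})$ is irreducible. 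Commuting all the $\rho_i'$ through these irreducible inductions collects them into an irreducible $\GL$-representation $\theta$ with supercuspidal support in the $\mathcal{C}'$-half, and leaves an irreducible representation $X$ supported on the weakly real part, with $\pi\cong\theta\rtimes X$. The same no-link property (together with $\mathcal{C}'\cap(\mathcal{C}')^{\vee}=\emptyset$, which kills the $\theta$–$\widetilde{\theta}$ interaction) shows $\theta\rtimes X$ is irreducible.

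\textbf{Uniqueness.} The support of $\theta$ is forced, being exactly the $\mathcal{C}'$-portion of $\supp(\pi)$; the only ambiguity is $\theta\leftrightarrow\widetilde{\theta}$ coming from $\theta\rtimes X\cong\widetilde{\theta}\rtimes X$ (the longest Weyl element acts on the $\GL$-factor by the transpose-inverse automorphism), and the normalization $\rho_i'\in\mathcal{C}'$ removes it. Given $\supp(\theta)$, Frobenius reciprocity together with the no-link property pins down the pair $(\theta,X)$ up to isomorphism through a single maximal parabolic. This gives uniqueness. The Hermitian assertion is then formal: $(\theta\rtimes X)^h\cong\theta^h\rtimes X^h$, where $\theta^h$ still has supercuspidal support with inertial classes in $\mathcal{C}'$ (because $\overline{\rho^{\vee}}\cong\rho$ for unitary $\rho$) and $X^h$ is again weakly real, so by uniqueness $\pi\cong\pi^h$ is equivalent to $\theta\cong\theta^h$ and $X\cong X^h$.

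\textbf{Unitarity, and the main obstacle.} The ``if'' direction is unitary parabolic induction together with the irreducibility established above. The ``only if'' direction --- $\theta\rtimes X$ unitary implies $\theta$ and $X$ unitary --- is the genuinely non-formal point, and is where I expect the real work to lie. One must show that the invariant Hermitian form on $\pi$, transported to the Jacquet module appearing in the uniqueness step, is a positive scalar multiple of the tensor product of invariant forms on $\theta$ and on $X$, so that positive-definiteness on $\pi$ is equivalent to positive-definiteness on each factor; this blockwise detection of unitarity under the no-link hypothesis is the heart of \cite[Theorem 4.2]{Tad09a}. In practice I would simply invoke that theorem; the steps above are the bookkeeping that surrounds it and the reason it applies in our setting.
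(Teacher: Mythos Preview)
The paper does not prove this theorem at all: it is stated with the attribution \cite[Theorem 4.2]{Tad09a} and used as a black box, with only the remark that weakly real representations are automatically Hermitian. Your sketch is consistent with this and in fact supplies more than the paper does --- you outline the existence/uniqueness via the Jantzen-style block decomposition and correctly identify that the non-formal content (unitarity descending to the factors) is exactly what is being imported from Tadi{\'c}.
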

We remark that weakly real representations are always Hermitian. Thus, $\pi$ is Hermitian if and only if $\theta(\pi)$ is Hermitian.

We also recall the following well-known methods to verify unitarity. See \cite[\S 3]{Tad93} for a sketch of a proof for these facts.
\begin{lemma}\label{lem parabolic reduction}
Let  $\tau \in \Pi(\GL_d(F))$ and $\sigma \in \Pi(G_n)$.
\begin{enumerate}
    \item [(a)] (Complementary series) Suppose that $\Pi_x:=\tau\lvert \cdot \rvert^{x} \rtimes \sigma$ is irreducible and Hermitian for any $x$ lying in a connected set $\Sigma \subset \R$. Then $\Pi_x$ is unitary for some $x \in \Sigma$ if and only if $\Pi_x$ is unitary for any $x \in \Sigma$.
    \item [(b)] (Limit of complementary series) In the setting of Part (a), if $\Pi_x$ is unitary for some $x \in \Sigma$, then for any $y$ lying in the closure of $\Sigma$,
        any irreducible subquotient of $\Pi_{y}$ is unitary.
        \item [(c)](Unitary parabolic reduction) Suppose that $\tau$ and $\sigma$ are both Hermitian and that $\tau\rtimes \sigma$ is irreducible. Then $\tau \rtimes \sigma$ is unitary if and only if $\tau$ and $\sigma$ are both unitary.
\end{enumerate}
    
\end{lemma}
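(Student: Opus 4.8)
\textbf{Proof proposal for Lemma \ref{lem parabolic reduction}.}

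The plan is to reduce everything to classical functional-analytic facts about continuous families of Hermitian forms, once we have set the stage with standard reducibility and normalized intertwining operators. First I would fix the normalized standard intertwining operator $N(x) \colon \tau\lvert\cdot\rvert^x \rtimes \sigma \to \tau^\vee\lvert\cdot\rvert^{-x} \rtimes \sigma^\vee$ (or, in the Hermitian situation of Parts (a) and (b), where $\tau \cong \tau^\vee$ and $\sigma \cong \sigma^\vee$, an operator landing back in the same space). The key input is that $N(x)$ depends \emph{meromorphically} (in fact, on the irreducibility locus, holomorphically and invertibly) on the complex parameter $x$, and that the invariant Hermitian pairing on $\Pi_x$ is given by $\langle v, w\rangle_x = \langle N(x) v, \overline{w}\rangle$ for the natural $G_n$-invariant pairing between $\Pi_x$ and its Hermitian dual. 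This is all contained in the formalism of \cite{Art13} recalled in \S\ref{subsec LIR}; no new ideas are needed here.

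For Part (a), on the connected set $\Sigma$ where $\Pi_x$ is irreducible and Hermitian, the form $\langle\,\cdot\,,\cdot\,\rangle_x$ is a nondegenerate (by irreducibility) continuous family of Hermitian forms on a fixed underlying space. Its signature is therefore locally constant on $\Sigma$, hence constant since $\Sigma$ is connected. Thus $\Pi_x$ is unitary (positive-definite signature) for one $x\in\Sigma$ if and only if for all $x\in\Sigma$. For Part (b), take $y$ in the closure of $\Sigma$ and pick a sequence $x_n \to y$ in $\Sigma$ with each $\Pi_{x_n}$ unitary, i.e.\ carrying a $G_n$-invariant positive-definite inner product $\langle\,\cdot\,,\cdot\,\rangle_{x_n}$, which after normalization we may take to be a continuous family. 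Passing to the limit, $\langle\,\cdot\,,\cdot\,\rangle_{y} := \lim_n \langle\,\cdot\,,\cdot\,\rangle_{x_n}$ is a $G_n$-invariant positive \emph{semi}-definite Hermitian form on the space of $\Pi_y$; its radical is a $G_n$-invariant subspace, so $\langle\,\cdot\,,\cdot\,\rangle_y$ descends to a genuine invariant inner product on each irreducible subquotient of $\Pi_y$, which is therefore unitary. (One must check the radical is not the whole space; this follows because the limiting form is nonzero, as the normalization can be arranged so that it is nonzero on a fixed vector.)

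For Part (c), the ``if'' direction is immediate: if $\tau$ and $\sigma$ are unitary then $\tau\rtimes\sigma$, being an irreducible subquotient of a unitarily induced representation from a unitary representation of a Levi, is unitary. For the converse, suppose $\tau\rtimes\sigma$ is irreducible, Hermitian, and unitary. By the Langlands quotient description one writes $\tau$ as a Langlands quotient of an induced representation from a Levi of $\GL_d(F)$; the Hermitian hypothesis forces the exponents to be symmetric about $0$, and one runs the standard Jacquet-module / intertwining-operator argument (as in \cite[\S3]{Tad93}): if $\tau$ or $\sigma$ failed to be unitary, one could produce inside $\tau\rtimes\sigma$, or inside a deformation of it, an indefinite invariant form, contradicting unitarity of the irreducible $\tau\rtimes\sigma$. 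Concretely, unitarity of an irreducible representation passes to the ``unitarily induced'' constituents and, conversely, restricts along the Jacquet functor; combined with irreducibility of $\tau\rtimes\sigma$ this pins down unitarity of both factors.

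The main obstacle is Part (b): one has to argue carefully that the limiting positive semi-definite form is not identically zero and that quotienting by its radical really does produce \emph{all} irreducible subquotients of $\Pi_y$ with positive-definite forms (a priori the radical could cut out some but not all constituents). This is handled by choosing the normalization of the family $\langle\,\cdot\,,\cdot\,\rangle_{x_n}$ so that a fixed unit vector has norm $1$ throughout, guaranteeing the limit is nonzero, and by noting that any irreducible subquotient of $\Pi_y$ appears in the associated graded of the radical filtration, on which the induced form is nondegenerate and positive definite. All of this is classical and is exactly what \cite[\S3]{Tad93} sketches, so in the paper it suffices to cite that reference; I would include the above as a brief reminder rather than a full argument.
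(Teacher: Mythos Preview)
Your proposal is correct and aligns with the paper's treatment: the paper gives no proof of this lemma at all, simply stating ``See \cite[\S 3]{Tad93} for a sketch of a proof for these facts,'' which is exactly the reference you arrive at after your sketch. Your expanded outline of the signature-constancy and limiting-form arguments is accurate and more detailed than what the paper provides, but since the paper treats this as a well-known black box, your final recommendation to cite \cite{Tad93} is precisely what the paper does.
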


\subsection{Decomposition of unitary inductions} 

In this subsection, we obtain results on the reducibility of certain unitary parabolic inductions.

Recall the setting in \S \ref{sec unitary induction of Arthur type} that we have a local Arthur parameter $\psi_0$ of good parity and
\[ \psi:= \psi_0 + (\rho\otimes S_{a} \otimes S_{b})^{\oplus 2}\]
is also of good parity. There is a surjection $\iota^{\ast}:  \widehat{\mathcal{S}}_{\psi}  \twoheadrightarrow \widehat{\mathcal{S}}_{\psi_0}$ given by restriction. For any fixed $\pi_0 \in \Pi_{\psi_0}$, we combine the results in the two previous sections to study the set of characters
\[ (\iota^{\ast})^{-1}(\pi_0):=\{ \langle \cdot, \pi \rangle_{\psi}\ | \ \pi \leq u_{\rho}(a,b) \rtimes \pi_0  \}.\]

Write $\varepsilon_0:=\langle \cdot, \pi_0 \rangle_{\psi_0}$ for short. According to \eqref{eq unitary induction Arthur}, the above set is contained in 
\[ (\iota^{\ast})^{-1} (\varepsilon_0)=\{ \varepsilon \in \widehat{\mathcal{S}}_{\psi} \ | \ \varepsilon|_{{\mathcal{S}}_{\psi_0}}  = \langle \cdot , \pi_0 \rangle\}. \]
We first show that $|(\iota^{\ast})^{-1} (\varepsilon_0)|\in \{1,2\}$ and write down the member(s) explicitly. Indeed, write
\[\psi_0= \bigoplus_{i \in I_0} \rho_i \otimes S_{a_i}\otimes S_{b_i},\ \psi= \bigoplus_{i \in I} \rho_i \otimes S_{a_i}\otimes S_{b_i}, \]
where we identify $I= I_0 \sqcup \{ i_0, i_0' \}$. Thus, 
\[ \rho_{i_0} \otimes S_{a_{i_0}}\otimes S_{b_{i_0}}= \rho\otimes S_{a}\otimes S_b =\rho_{i_0'} \otimes S_{a_{i_0'}}\otimes S_{b_{i_0'}}.\]
Recall from \S \ref{sec character} that we have identified $\widehat{\mathcal{S}}_{\psi_0}$ (resp. $\widehat{\mathcal{S}}_{\psi}$) with a subset of functions 
\[\Sigma_0\subseteq \{\chi_0: I_0 \to \{\pm 1\}\}\ \  \text{(resp. } \Sigma \subseteq \{\chi: I \to \{\pm 1\}\})\] with several requirements. Under this identification, the surjection $\iota^{\ast}$ is exactly the restriction of the corresponding functions
\[ \iota^{\ast}: \chi \mapsto \chi|_{I_0}.\]
Fixing $\varepsilon_0 \in \widehat{\mathcal{S}}_{\psi_0}$, we define functions
\begin{align*}
    \varepsilon_0^{+} (i):= \begin{cases}
        \varepsilon_0(i) & \text{if }i \in I_0,\\
        1 &\text{if }i \in \{i_0, i_0'\},
    \end{cases}\ \ \ \     \varepsilon_0^{-} (i):= \begin{cases}
        \varepsilon_0(i) & \text{if }i \in I_0,\\
        -1 &\text{if }i \in \{i_0, i_0'\}.
    \end{cases} 
\end{align*}
Note that the functions $\varepsilon_0^{+} (i)$ and $\varepsilon_0^{-} (i)$ both lie in $\Sigma$ if and only if $\rho_i \otimes S_{a_i}\otimes S_{b_i} \not\cong \rho \otimes S_a \otimes S_b $ for any $i \in I_{\rho}$. Therefore, we conclude that 
\begin{align*}
    (\iota^{\ast})^{-1} (\varepsilon_0)= \begin{cases}
\{\varepsilon_0^{ \varepsilon_0(i)}\} & \text{if there exists an }i \in I_0 \text{ such that }\rho_i \otimes S_{a_i}\otimes S_{b_i} \cong \rho \otimes S_a \otimes S_b,\\
\{\varepsilon_0^+, \varepsilon_0^{-} \} & \text{otherwise.}
    \end{cases}
\end{align*}
For simplicity, we will write $\psi_0 \supseteq \rho \otimes S_a \otimes S_b$ if there exists an $i \in I_0$ such that $\rho_i \otimes S_{a_i}\otimes S_{b_i} \cong \rho \otimes S_a \otimes S_b$, and write $ \psi_0 \not\supseteq \rho \otimes S_a \otimes S_b$ otherwise.

Recall that we have defined an element $s_u \in \mathcal{S}_{\psi}$ corresponding to the summand $\rho \otimes S_{a}\otimes S_{b}$ inside $\psi$ in \S \ref{subsec LIR}. Suppose that $\pi \leq u_{\rho}(a,b)\rtimes \pi_0$ and $\langle \cdot, \pi \rangle_{\psi}= \varepsilon_0^{\pm}$. Then
\[ \langle s_u , \pi\rangle_{\psi}= \pm 1.\]
Thus, the following theorem implies Theorem \ref{thm base case reducible components}.

\begin{thm}\label{thm reducibility unitary induction}
    Keep the notation above. Suppose that $ \psi_0 \not\supseteq \rho \otimes S_a \otimes S_b$.    Let
\[ m^{\pm}:=| \{\pi \leq u_{\rho}(a,b)\rtimes \pi_0 \ | \ \langle \cdot, \pi\rangle = \varepsilon_0^{\pm}\}|. \]
    Then $|m^+- m^-| \leq 1$. In particular, if $u_{\rho}(a,b)\rtimes \pi_0$ reduces, then $(\iota^{\ast})^{-1} (\pi_0)= \{\varepsilon_0^{+}, \varepsilon_0^{-}\}$.
\end{thm}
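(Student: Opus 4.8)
The plan is to compute $u_\rho(a,b)\rtimes\pi_0$ together with the characters of all its irreducible constituents purely combinatorially, using Atobe's unitary induction formula (Theorem~\ref{thm Atobe unitary induction}), and then to deduce $|m^+-m^-|\le 1$ from the sign-flip property of the characters attached to adjacent extended multi-segments (Remark~\ref{rmk character}(2)).

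First I would write $\pi_0=\pi(\EE)$ with $\psi_\EE=\psi_0$, which is possible since $\psi_0$ is of good parity. Putting $A:=\half{a+b}-1$, $B:=\half{a-b}$ and noting that $\psi=\psi_\EE+(\rho\otimes S_a\otimes S_b)^{\oplus 2}$ is of good parity, Theorem~\ref{thm Atobe unitary induction} gives
\[
u_\rho(a,b)\rtimes\pi_0=\bigoplus_{\ee\in\Eseg_{[A,B]_\rho}}\pi(\EE_\ee),
\]
and by Remark~\ref{rmk non-empty Atobe} the nonzero summands are exactly those with $\ee\in T:=\NV_\EE(S_\EE)$. Since $\pi(\EE)\ne 0$ forces $\NV(\EE)\ne 0$ (Theorem~\ref{thm non-vanishing}) and $S_\EE$ is an interval (Remark~\ref{rmk interval non-vanishing ineq}), Proposition~\ref{prop NV segment}(1) shows $T$ is a non-empty interval in $\Eseg_{[A,B]_\rho}$; moreover the $\pi(\EE_\ee)$ for $\ee\in T$ are pairwise distinct, because the $\EE_\ee$ all have the same admissible order and local Arthur packets are multiplicity free (Remark~\ref{rmk multiplicity one}). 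Hence $m^++m^-=|T|$.

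Next I would track $\eta_{\EE_\ee}:=\langle\cdot,\pi(\EE_\ee)\rangle_\psi$ as $\ee$ ranges over $T$. By the discussion preceding the statement its value lies in $(\iota^\ast)^{-1}(\varepsilon_0)$, and since $\psi_0\not\supseteq\rho\otimes S_a\otimes S_b$ that fibre is exactly $\{\varepsilon_0^+,\varepsilon_0^-\}$; thus $\eta_{\EE_\ee}\in\{\varepsilon_0^+,\varepsilon_0^-\}$, and which one it equals is detected by the sign $\eta_{\EE_\ee}(\rho\otimes S_a\otimes S_b)=\langle s_u,\pi(\EE_\ee)\rangle_\psi\in\{\pm 1\}$. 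Listing $T=\{\ee^{(1)},\dots,\ee^{(r)}\}$ so that $\ee^{(k)}$ is adjacent to $\ee^{(k+1)}$ (possible since $T$ is an interval), the multi-segments $\EE_{\ee^{(k)}}$ share an admissible order and the extended segment $\ee^{(k)}$ occupies the same slot in each, a slot whose summand is $\rho\otimes S_a\otimes S_b$; so Remark~\ref{rmk character}(2) applied at that slot yields $\eta_{\EE_{\ee^{(k+1)}}}(\rho\otimes S_a\otimes S_b)=-\eta_{\EE_{\ee^{(k)}}}(\rho\otimes S_a\otimes S_b)$. Therefore $\eta_{\EE_{\ee^{(1)}}},\dots,\eta_{\EE_{\ee^{(r)}}}$ alternate between $\varepsilon_0^+$ and $\varepsilon_0^-$, whence $|m^+-m^-|=|2m^+-r|\le 1$. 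If $u_\rho(a,b)\rtimes\pi_0$ is reducible then $r=|T|\ge 2$, which with $|m^+-m^-|\le 1$ forces $m^+\ge 1$ and $m^-\ge 1$, so both $\varepsilon_0^+$ and $\varepsilon_0^-$ occur, i.e.\ $(\iota^\ast)^{-1}(\pi_0)=\{\varepsilon_0^+,\varepsilon_0^-\}$; taking $\pi_1,\pi_2\le\Pi_0$ with characters $\varepsilon_0^+,\varepsilon_0^-$ then gives $\langle s_u,\pi_1\rangle_\psi=1=-\langle s_u,\pi_2\rangle_\psi$, recovering Theorem~\ref{thm base case reducible components}.

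I expect the main obstacle to be the bookkeeping that makes the sign flip rigorous: one must verify that passing from $\ee^{(k)}$ to the adjacent $\ee^{(k+1)}$ changes the corresponding extended $\Z$-segment of $\EE_{\ee^{(k)}}$ in the ``adjacent'' sense of Definition~\ref{def adjacent} while leaving all $Z$-sets untouched (they depend only on $\psi$ and the admissible order), so that Remark~\ref{rmk character}(2) genuinely applies; one also needs that the insertion slot for $\ee$ and $\ee^\dagger$ is the same for every $\ee\in T$, which holds because all these $\ee$ share the support $[A,B]_\rho$. Everything else --- the interval structure of $T$, the two-element fibre of $\iota^\ast$, and the elementary count for an alternating $\pm1$ sequence --- is routine given the results already established.
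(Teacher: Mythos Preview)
Your proposal is correct and follows essentially the same approach as the paper's own proof: write $\pi_0=\pi(\EE)$, decompose $u_\rho(a,b)\rtimes\pi(\EE)$ via Theorem~\ref{thm Atobe unitary induction}, use Proposition~\ref{prop NV segment}(1) to see that the set of $\ee$ with $\pi(\EE_\ee)\neq 0$ is an interval, and then invoke Remark~\ref{rmk character}(2) to get the alternating sign on $\rho\otimes S_a\otimes S_b$. Your additional remarks on multiplicity-freeness and the bookkeeping for the sign flip are accurate and only make the argument more explicit than the paper's version.
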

\begin{proof}
Write $\pi_0=\pi(\EE)$ with $\psi_{\EE}= \psi_0$. According to \eqref{eq unitary induction Atobe}, we have
    \[  u_{\rho}(a,b) \rtimes \pi(\EE) = \bigoplus_{\ee \in \Eseg_{[A,B]_{\rho}}} \pi(\EE_{\ee})= \bigoplus_{j\in S} \pi(\EE_{\ee_j}),  \]
    where $S:= \{ \ee \in \Eseg_{[A,B]_{\rho}}  \ | \ \pi(\EE_{\ee})\neq 0 \} = \NV_{\EE}(S_{\EE})$, see Remark \ref{rmk non-empty Atobe}. Now Proposition \ref{prop NV segment}(1) implies that $S$ is an interval. Thus, we write $S= \{\ee_1, \ldots, \ee_r \}$ where $\ee_i$ is adjacent to $\ee_{i+1}$. Then Remark \ref{rmk character}(2) implies that 
    \[ \eta_{\EE_{\ee_i}}(\rho \otimes S_a \otimes S_b)= - \eta_{\EE_{\ee_{i+1}}}(\rho \otimes S_a \otimes S_b).\]
    Let $\epsilon:= \eta_{\EE_{\ee_1}}(\rho \otimes S_a \otimes S_b)$. We conclude that 
    \begin{align*}
         \{\pi \leq u_{\rho}(a,b)\rtimes \pi_0 \ | \ \langle \cdot, \pi\rangle = \varepsilon_0^{\epsilon}\}= \{\pi(\EE_{\ee_j})\ | \ 1 \leq j \leq r,\ 1 \equiv j \mod 2 \},\\
         \{\pi \leq u_{\rho}(a,b)\rtimes \pi_0 \ | \ \langle \cdot, \pi\rangle = \varepsilon_0^{-\epsilon}\}= \{\pi(\EE_{\ee_j})\ | \ 1 \leq j \leq r,\ 0 \equiv j \mod 2 \}.
    \end{align*}
    This completes the proof of the theorem. 
\end{proof}

As a corollary, we prove an irreducibility result.

\begin{cor}\label{cor irreducible}
    Suppose that $ \rho_i \otimes S_{a_i}\otimes S_{b_i}$, $i=1,\ldots, r$, are of good parity.  Let $\pi= \pi(\EE)\neq 0$.    If $u_{\rho_i}(a_i, b_i) \rtimes \pi(\EE)$ is irreducible for any $i=1 ,\ldots, r$,  then
    \[  \bigtimes_{i=1}^r u_{\rho_i}(a_i, b_i) \rtimes \pi(\EE)\]
    is also irreducible.
\end{cor}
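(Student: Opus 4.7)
The plan is to induct on $r$, with the base case $r=1$ being precisely the hypothesis. For the inductive step, assuming the statement for $r-1$ factors, the crux is to verify the intermediate claim that $u_{\rho_j}(a_j, b_j) \rtimes \pi(\EE_{\ee_1^*})$ is irreducible for each $2 \leq j \leq r$. Here $\ee_i^*$ denotes the unique element of $\NV_\EE(\Eseg_{[A_i,B_i]_{\rho_i}})$ furnished by Theorem \ref{thm Atobe unitary induction} together with the hypothesis on $u_{\rho_i}(a_i, b_i) \rtimes \pi(\EE)$. Granting this, commutativity of parabolic induction on the general linear factor yields
\[
\bigtimes_{i=1}^{r} u_{\rho_i}(a_i, b_i) \rtimes \pi(\EE) = \bigtimes_{j=2}^{r} u_{\rho_j}(a_j, b_j) \rtimes \pi(\EE_{\ee_1^*}),
\]
and the inductive hypothesis applied to $\pi(\EE_{\ee_1^*})$ with the $r-1$ factors $u_{\rho_j}(a_j, b_j)$ concludes the induction.

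To establish the intermediate claim, write $\Delta_j = [A_j, B_j]_{\rho_j}$. Theorem \ref{thm Atobe unitary induction} applied to $\pi(\EE_{\ee_1^*})$ gives
\[
u_{\rho_j}(a_j, b_j) \rtimes \pi(\EE_{\ee_1^*}) = \bigoplus_{\ee \in \NV_{\EE_{\ee_1^*}}(\Eseg_{\Delta_j})} \pi(\EE_{\ee_1^*, \ee}),
\]
so irreducibility is equivalent to $|\NV_{\EE_{\ee_1^*}}(\Eseg_{\Delta_j})| = 1$. Since this set is non-empty by Remark \ref{rmk non-empty Atobe}, it suffices to prove the upper bound. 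Given any $\ee$ in this set, $\pi(\EE_{\ee_1^*, \ee}) \neq 0$; the two extended multi-segments $\EE_{\ee_1^*, \ee}$ and $\EE_{\ee, \ee_1^*}$ share the same underlying local Arthur parameter and are related by row exchanges, so Proposition \ref{prop row exchanges} identifies $\pi(\EE_{\ee_1^*, \ee}) = \pi(\EE_{\ee, \ee_1^*})$. A second application of Theorem \ref{thm Atobe unitary induction} realizes the latter as a summand of $u_{\rho_1}(a_1, b_1) \rtimes \pi(\EE_\ee)$, forcing $\pi(\EE_\ee) \neq 0$, hence $\ee \in \NV_\EE(\Eseg_{\Delta_j}) = \{\ee_j^*\}$. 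This yields $\ee = \ee_j^*$ and completes the proof of the claim.

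The hardest part is the careful bookkeeping: confirming that the iterated Atobe decomposition is compatible with interchanging the order of $u_{\rho_1}(a_1, b_1)$ and $u_{\rho_j}(a_j, b_j)$, and that the implication $\pi(\EE_{\ee, \ee_1^*}) \neq 0 \Rightarrow \pi(\EE_\ee) \neq 0$ is legitimate. Both points are essentially formal consequences of the framework of Section \ref{sec extended multi-segments}: the first uses Proposition \ref{prop row exchanges}, and the second reflects the observation that deleting extended segments from an extended multi-segment preserves non-vanishing, since the pairwise non-vanishing conditions on the surviving pairs form a subset of the original ones. No further combinatorial input from Section \ref{sec Z-extended segments} (beyond what is already absorbed into Theorem \ref{thm Atobe unitary induction} and Remark \ref{rmk non-empty Atobe}) is needed for this corollary.
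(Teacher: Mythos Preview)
Your approach is genuinely different from the paper's. The paper argues by contradiction using Theorem~\ref{thm reducibility unitary induction}: assuming $\Pi_{k-1}$ irreducible and $\Pi_k$ reducible, Theorem~\ref{thm reducibility unitary induction} produces summands of $\Pi_k$ taking opposite signs at $\rho_k\otimes S_{a_k}\otimes S_{b_k}$, while the alternative decomposition $\Pi_k=\bigtimes_{i\neq k}u_{\rho_i}(a_i,b_i)\rtimes\pi_k$ (with $\pi_k$ irreducible) forces, via \eqref{eq unitary induction Arthur}, a single sign. Your argument bypasses component-group characters entirely and works directly with Atobe's decomposition, which is appealing but shifts the burden onto the combinatorics.

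That burden is where the gap lies. Your one-line justification ``the pairwise non-vanishing conditions on the surviving pairs form a subset of the original ones'' is not correct as stated: $\NV(\EEE)\neq 0$ is \emph{not} a conjunction over all pairs but over \emph{adjacent} pairs in every row-exchange representative, and deleting the pair $\ee_1^*,(\ee_1^*)^\dagger$ creates a new adjacency that was not present before. What you actually need is that every $\EEE'\in[\EEE_\ee]$ lifts to some element of $[\EEE_{\ee_1^*,\ee}]^\dagger$ whose deletion recovers $\EEE'$; this is close to observations (i)--(ii) in the proof of Proposition~\ref{prop NV segment}, together with Lemma~\ref{lem e dagger}(3) (a segment passes through a $\dagger$-pair unchanged), but one must still argue that the pair can always be maneuvered out of the way despite the containment requirement on row exchanges. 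The companion claim that $(\EE_{\ee_1^*})_\ee$ and $(\EE_\ee)_{\ee_1^*}$ are row-exchange equivalent is likewise not automatic when $B_1=B_j$ with $\rho_1\cong\rho_j$. Both claims are plausible (and likely provable from the toolkit of \S\ref{sec Z-extended segments}), but they are not ``essentially formal''; filling them in would make your route longer than the paper's, whose use of Theorem~\ref{thm reducibility unitary induction} sidesteps all of this.
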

\begin{proof}
Let $\psi_0:= \psi_{\EE}$. For $1 \leq k \leq r$, let
\[ \Pi_k:= \bigtimes_{i=1}^k  u_{\rho_i}(a_i, b_i) \rtimes \pi(\EE).\]
Suppose the contrary that $\Pi_{k}$ is reducible and $\Pi_{k-1}$ is irreducible for some $2 \leq k \leq r$. Let 
\begin{align*}
     \psi& := \psi_0+ \bigoplus_{i=1}^{k} (\rho_i \otimes S_{a_i} \otimes S_{b_i})^{\oplus 2},\\
    \psi'& := \psi_0+ \bigoplus_{i=1}^{k-1} (\rho_i \otimes S_{a_i} \otimes S_{b_i})^{\oplus 2}, \\
    \psi_k &:= \psi_0+(\rho_k \otimes S_{a_k} \otimes S_{b_k})^{\oplus 2}.
\end{align*}
Let $\varepsilon':= \langle \cdot, \Pi_{k-1} \rangle_{\psi'} \in \widehat{\mathcal{S}}_{\psi'}$. Then Theorem \ref{thm reducibility unitary induction} implies that there exist $ \Pi_{k-1}^{+}, \Pi_{k-1}^{-} \leq \Pi_k$ such that 
\[  \langle \cdot , \Pi_{k-1}^{\pm} \rangle_{\psi}( \rho_k \otimes S_{a_k} \otimes S_{b_k})= \pm 1. \]
On the other hand, $\pi_k:=  u_{\rho_k}(a_k, b_k) \rtimes \pi(\EE)$ is irreducible by assumption. Then by repeatedly applying \eqref{eq unitary induction Arthur}, for any irreducible subquotient $\sigma$ of
\[  \Pi_k= \bigtimes_{i=1}^{k-1}  u_{\rho_i}(a_i, b_i) \rtimes \pi_k, \]
its character value at $\rho_k \otimes S_{a_k}\otimes S_{b_k}$  must be
\[  \langle \cdot, \sigma\rangle_{\psi} (\rho_k \otimes S_{a_k}\otimes S_{b_k} )= \langle \cdot, \sigma\rangle_{\psi}|_{\mathcal{S}_{\psi_k}} (\rho_k \otimes S_{a_k}\otimes S_{b_k} )= \langle \cdot, \pi_k\rangle_{\psi_k}(\rho_k \otimes S_{a_k}\otimes S_{b_k} ).   \]
This gives a contradiction to the existence of $\Pi_{k-1}^{+}$ or $\Pi_{k-1}^{+}$ and completes the proof of the corollary.
\end{proof}

Finally, we prove the following theorem (which is obtained independently in \cite[Corollary 3.33]{BS25} at the same time), which is crucial in the reduction process.

\begin{thm}\label{thm reduction}
Suppose that $\rho_1 \otimes S_{a_1}\otimes S_{b_1} \not\cong \rho_2 \otimes S_{a_2}\otimes S_{b_2}$
and that $ \psi_0+ (\rho_i \otimes S_{a_i}\otimes S_{b_i})^{\oplus 2}$, $i=1,2$, are of good parity. Fix a $\pi_0 \in \Pi_{\psi_0}$ and let
\[ \Pi_{i}:=u_{\rho_i}(a_i, b_i)\rtimes \pi_0. \]
If $\Pi_1,\Pi_2$ are both reducible, then after relabeling if necessary, there exists an irreducible subquotient $\pi_1 \leq \Pi_1$ such that
\[ u_{\rho_2}(a_2,b_2)\rtimes \pi_1\]
is reducible.
\end{thm}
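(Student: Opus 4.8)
## Proof Proposal for Theorem \ref{thm reduction}

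The plan is to reduce everything to the combinatorial statement in Proposition \ref{prop NV segment}(2), via the dictionary between parabolic inductions of Arthur type and extended multi-segments. First I would write $\pi_0 = \pi(\EE)$ for an extended multi-segment $\EE$ with $\psi_{\EE} = \psi_0$. Put $\Delta_i := [A_i, B_i]_{\rho_i}$ with $A_i = \half{a_i+b_i}-1$, $B_i = \half{a_i-b_i}$. By Theorem \ref{thm Atobe unitary induction}, the irreducible summands of $\Pi_i = u_{\rho_i}(a_i,b_i)\rtimes \pi(\EE)$ are exactly the $\pi(\EE_{\ee})$ for $\ee \in \NV_{\EE}(S_{\EE,i})$, where $S_{\EE,i}\subseteq \Eseg_{\Delta_i}$ is the interval cut out by the inequality \eqref{eq ast}; by Remark \ref{rmk non-empty Atobe} this set is non-empty, and by Proposition \ref{prop NV segment}(1) it is an interval. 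The hypothesis that $\Pi_i$ is reducible translates precisely into $|\NV_{\EE}(S_{\EE,i})| > 1$ for $i = 1,2$. Note also that the hypothesis $\rho_1\otimes S_{a_1}\otimes S_{b_1} \not\cong \rho_2\otimes S_{a_2}\otimes S_{b_2}$ means that when $\rho_1\cong\rho_2$ the segments $\Delta_1, \Delta_2$ are distinct; when $\rho_1\not\cong\rho_2$ the two inductions act on totally independent blocks and the claim is essentially trivial (one may build $\pi_1$ from any reducible summand on the $\rho_1$-block while leaving the $\rho_2$-block untouched, and irreducibility/reducibility on disjoint blocks is independent). So the substance is the case $\rho_1 \cong \rho_2 =: \rho$, $\Delta_1 \neq \Delta_2$.

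In that case, after relabeling we may assume $\Delta_1 \preceq \Delta_2$ (using the $\preceq$ order from Definition \ref{def non-vanishing}(3)); if the two supports are $\preceq$-incomparable one checks directly that the relevant inductions are again independent, so assume $\Delta_1 \preceq \Delta_2$ and $\Delta_1 \neq \Delta_2$. Now the goal is to produce $\ee^* \in \NV_{\EE}(S_{\EE,1})$ such that $u_{\rho}(a_2,b_2)\rtimes \pi(\EE_{\ee^*})$ is reducible, i.e. $|\NV_{\EE_{\ee^*}}(S_{\EE_{\ee^*},2})| > 1$. The interval $S_{\EE_{\ee^*},2}$ cutting out the $\Delta_2$-part may a priori depend on $\ee^*$ through the ordering data $\alpha_i$ in \eqref{eq ast}, but since $\ee^*$ and $\ee_2$ occupy fixed positions in the admissible sequence $\EE_{\ee,\ee'}$ relative to each other (we insert the $\Delta_1$-pair before the $\Delta_2$-pair when $\Delta_1\preceq\Delta_2$), this interval is actually independent of the choice of $\ee^* \in \Eseg_{\Delta_1}$; call it $S' \subseteq \Eseg_{\Delta_2}$, so that $\NV_{\EE_{\ee^*}}(S') = \NV_{\EE_{\ee^*}}(S_{\EE_{\ee^*},2})$. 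Then the setup of Proposition \ref{prop NV segment}(2) is in force with $S = S_{\EE,1}$, $S' = S'$: condition (a) holds since $\Delta_1\preceq\Delta_2$ and $\Delta_1\neq\Delta_2$; condition (b) holds since both $\Pi_1, \Pi_2$ are reducible; and for condition (c) we argue by contradiction — if $|\NV_{\EE_{\ee}}(S')| \le 1$ for all $\ee$ in the interval $\NV_{\EE}(S_{\EE,1})$, then since $|\NV_{\EE}(S')| > 1$ (reducibility of $\Pi_2$, which is the $\ee$-less case, or rather the case where the $\Delta_2$-pair is inserted alone) there must be adjacent $\ee_1^*, \ee_2^* \in \NV_{\EE}(S_{\EE,1})$ with $|\NV_{\EE_{\ee_1^*}}(S')| = 1$ and $|\NV_{\EE_{\ee_2^*}}(S')| \ge 1$; but then Proposition \ref{prop NV segment}(2) forces $|\NV_{\EE_{\ee_2^*}}(S')| = 2 > 1$, a contradiction. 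Hence some $\ee^* := \ee_2^*$ (or directly some $\ee$) has $|\NV_{\EE_{\ee}}(S')| \ge 2$, and setting $\pi_1 := \pi(\EE_{\ee^*})$ makes $u_{\rho}(a_2,b_2)\rtimes \pi_1$ reducible.

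I anticipate the main obstacle to be the bookkeeping in identifying $S_{\EE_{\ee^*},2}$ and checking it is independent of $\ee^*$ — i.e. making the reduction to exactly the hypotheses (a), (b), (c) of Proposition \ref{prop NV segment}(2) clean rather than hand-wavy. Concretely, one needs: (i) to fix a single admissible order on the combined parameter $\psi_0 + (\rho\otimes S_{a_1}\otimes S_{b_1})^{\oplus 2} + (\rho\otimes S_{a_2}\otimes S_{b_2})^{\oplus 2}$ in which the $\Delta_1$-entries precede the $\Delta_2$-entries (legitimate by $\Delta_1 \preceq \Delta_2$ together with property (P)), so that the quantities $\alpha_i$ in \eqref{eq ast} governing $S_{\EE,2}$ do not see the choice of the $\Delta_1$-extended-segment $\ee^*$; (ii) to match the "no extra $\ee$" case of the combinatorics with the genuine reducibility of $\Pi_2$, which is where one invokes Remark \ref{rmk non-empty Atobe} to know the relevant $\NV_{\EE}(S')$ is non-empty and Proposition \ref{prop NV segment}(1) to know it is an interval. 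Once the translation is set up correctly, the proof is a formal consequence of Proposition \ref{prop NV segment}(2) and the multiplicity-one / row-exchange machinery (Theorems \ref{thm Atobe}, \ref{thm Atobe unitary induction}, Proposition \ref{prop row exchanges}); the remaining cases ($\rho_1\not\cong\rho_2$, or $\Delta_1,\Delta_2$ incomparable) reduce to independence of inductions on disjoint Jordan blocks and are handled by the same dictionary with no combinatorial input.
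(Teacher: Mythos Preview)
Your proposal is correct and follows essentially the same route as the paper: reduce to $\rho_1\cong\rho_2$ (the paper cites Jantzen's decomposition \cite[Theorem 9.3(6)]{Jan97} for this, which is precisely your ``independent blocks'' observation), relabel so that $\Delta_1^{\Z}\preceq\Delta_2^{\Z}$, translate both reducibility hypotheses into $|\NV_{\EE}(S_{i,\EE})|>1$ via Theorem \ref{thm Atobe unitary induction} and Remark \ref{rmk non-empty Atobe}, and then invoke Proposition \ref{prop NV segment}(2). The paper's argument is phrased as ``suppose some $\ee_1$ gives an irreducible induction; then an adjacent $\ee_1'$ gives a reducible one'', while you phrase it as a contradiction assuming all $\ee$ give length $\le 1$; these are the same argument. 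One small remark: your aside about ``$\preceq$-incomparable supports'' is unnecessary, since $\preceq$ is total on admissible pairs (Definition \ref{def non-vanishing}(3) covers all cases once one relabels), and the paper simply relabels without comment.
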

\begin{proof}
    If $\rho_1 \not\cong \rho_2$, then for any $\pi_1 \leq \Pi_1$, we have
    \[| \{ \pi \leq  u_{\rho_2}(a_2,b_2)\rtimes \pi_1\} |= | \{\pi\leq u_{\rho_2}(a_2,b_2)\rtimes \pi_0\}|>1 \]
    by the Jantzen decomposition (\cite[Theorem 9.3(6)]{Jan97}). Thus we assume that $\rho_1 \cong \rho_2 \cong \rho$ in the rest of the proof. Let $A_i:= \half{a_i+b_i}-1$ and $B_i:= \half{a_i-b_i}$ and let $\Delta_i:= [A_i,B_i]_{\rho}$. We assume that 
    \begin{align}
        \Delta_1^{\Z}:=[ \lfloor A_1\rfloor , \lfloor B_1\rfloor] \preceq [ \lfloor A_2\rfloor , \lfloor B_2\rfloor]=: \Delta_2^{\Z}
    \end{align} (see Definition \ref{def non-vanishing}(3) for the definition of $\preceq$) by relabeling them if necessary. 

    Write $\pi_0=\pi(\EE)$ with $\psi_{\EE}= \psi_0$.  Suppose that there exists an $\pi_1= \pi(\EE_{\ee_1}) \leq \Pi_1$ such that $u_{\rho}(a_2, b_2) \rtimes \pi_1$ is irreducible. By Remark \ref{rmk non-empty Atobe}, we write
    \[ S_i:= \{ \ee \in \Eseg_{[A_i,B_i]_{\rho}}  \ | \ \pi(\EE_{\ee})\neq 0 \}= \NV_{\EE}(S_{i,\EE}), \]
    which are intervals by Proposition \ref{prop NV segment}(1). Note that $|S_i|= |\{\pi \leq \Pi_i\}|>1$ by assumption. Thus, we can take an $\ee_1' \in S_1$ that is adjacent to $\ee_1$ and let $\pi_1':= \pi(\EE_{\ee_1'})$. We claim that $u_{\rho}(a_2,b_2)\rtimes \pi_1'$ reduces, or equivalently, $|\NV_{\EE_{\ee_1'}}(S_{2,\EE})| >1.$
    By the above discussion and Remark \ref{rmk non-empty Atobe}, we have
    \begin{itemize}
        \item $ \Delta_1^{\Z} \preceq \Delta_2^{\Z} $ and $\Delta_1 \neq \Delta_2$.
        \item $|\NV_{\EE}(S_{1,\EE})| >1$ and $|\NV_{\EE}(S_{2,\EE})| >1$.
        \item $\ee_1, \ee_1' \in \NV_{\EE}(S_{1,\EE})$ are adjacent, $|\NV_{\EE_{\ee_1}}(S_{2,\EE})|=1 $ and $|\NV_{\EE_{\ee_1'}}(S_{2,\EE})|>0 .$ 
    \end{itemize}
    Thus, Proposition \ref{prop NV segment}(2) implies that $|\NV_{\EE_{\ee_1'}}(S_{2,\EE})|=2$. This verifies the claim and completes the proof of the theorem.   
\end{proof}

\subsection{\texorpdfstring{Proof of Theorem \ref{thm main}}{}}
We begin with some notation. Let $\pi\in\Pi_{A+}(G_n)$ be written as in \eqref{eq decomp A+ pi}. Decompose $I_{nu}= I_{nu,sd} \sqcup I_{nu,nsd}$ where
\[ I_{nu,sd}= \{i \in I_{nu}\ | \ \rho_i \cong \rho_i^{\vee}\}.\]
We set
\[ \tau_{nu,nsd}:= \bigtimes_{i \in I_{nu,nsd} } u_{\rho_i}(a_i, b_i) \lvert \cdot \rvert^{x_i},\ \tau_{nu,sd}:=\bigtimes_{i \in I_{nu,sd} } u_{\rho_i}(a_i, b_i) \lvert \cdot \rvert^{x_i} .\]

For $i \in I_{nu,nsd}$, let
\[ (\rho_i', x_i'):= \begin{cases}
    (\rho_i, x_i) & \text{ if }\rho_i \in \mathcal{C}',\\
    (\rho_i^{\vee},-x_i) &\text{ otherwise,}
\end{cases}\]
and let $I_{nu,nsd}^{+}:= \{i \in I_{nu,nsd}\ | \ x_i' >0\}.$ Note that $\rho_i'\in\mathcal{C}'$ for any $i\in I_{nu,nsd}.$ Thus we may rewrite \eqref{eq decomp A+ pi} as 
\[ \pi= \left(\bigtimes_{i \in I_{nu,nsd} } u_{\rho_i'}(a_i, b_i) \lvert \cdot \rvert^{x_i'} \times \theta(\pi_A) \right) \rtimes  \left( \bigtimes_{i \in I_{nu,sd} } u_{\rho_i}(a_i, b_i) \lvert \cdot \rvert^{x_i} \rtimes X_{wr}(\pi_A)\right),  \]
which gives the decomposition $\pi=\theta(\pi)\rtimes X_{wr}(\pi)$. 
 Since $\theta(\pi_A)$ is unitary, it is Hermitian. The Hermitian contragredient of $\theta(\pi) $ is given by 
\[\theta(\pi)^+:= \bigtimes_{i \in I_{nu,nsd} } u_{\rho_i'}(a_i, b_i) \lvert \cdot \rvert^{-x_i'} \times \theta(\pi_A). \]

Now we show Part (1) of Theorem \ref{thm main}. Suppose that $\pi$ is Hermitian. Then Theorem \ref{thm Tadic weakly real} implies that  $\theta(\pi)$ is also Hermitian, i.e.  $\theta(\pi) \cong \theta(\pi)^+$. By comparing the $L$-parameters of $\theta(\pi)$ and $\theta(\pi)^+$, we must have $|I_{nu,nsd}|=2|I_{nu,nsd}^+|$ and
\[ \bigtimes_{i \in I_{nu,nsd} } u_{\rho_i'}(a_i, b_i) \lvert \cdot \rvert^{x_i'}=  \bigtimes_{i \in I_{nu,nsd}^+ } u_{\rho_i'}(a_i, b_i) \lvert \cdot \rvert^{x_i'}\times u_{\rho_i'}(a_i, b_i) \lvert \cdot \rvert^{-x_i'}.  \]
Therefore,
\begin{align}\label{eq nu,nsd^+}
  \tau_{nu,nsd}=\bigtimes_{i \in I_{nu,nsd} } u_{\rho_i}(a_i, b_i) \lvert \cdot \rvert^{x_i}= \bigtimes_{i \in I_{nu,nsd}^+ } u_{\rho_i}(a_i, b_i) \lvert \cdot \rvert^{x_i}\times u_{\rho_i^{\vee}}(a_i, b_i) \lvert \cdot \rvert^{x_i}.
\end{align} 
By the classification of the unitary dual of general linear groups in \cite{Tad86},  $\tau_{nu,nsd} \times \tau_{nu,nsd}^{\vee}$ is unitary. This implies that $ \pi \in \Sigma_{A+, \, H}(G_n)$.

Conversely, suppose that $ \pi \in \Sigma_{A+, \, H}(G_n)$, which implies that
\[ \bigtimes_{i \in I_{nu,nsd} } u_{\rho_i}(a_i, b_i) \lvert \cdot \rvert^{x_i} \times  u_{\rho_i^{\vee}}(a_i, b_i) \lvert \cdot \rvert^{-x_i} \]
is unitary. By the classification of the unitary dual of general linear groups, there exists an involution $\iota: I_{nu,nsd} \to I_{nu,nsd}$ such that $ \rho_{\iota(i)}\cong \rho_i^{\vee}$ and $(a_{\iota(i)},b_{\iota(i)}, x_{\iota(i)})= (a_i, b_i, x_i).$ Since $ \mathcal{C}' \cap (\mathcal{C}')^{\vee}= \emptyset$, we see that $\iota(I_{nu,nsd}^+) \cap I_{nu,nsd}^+= \emptyset.$ Then it is not hard to check that $\theta(\pi)^+ \cong \theta(\pi)$; hence $\pi$ is Hermitian. This completes the proof of Part (1) of Theorem \ref{thm main}.

Now we prove Part (2). We may assume that $\pi \in \Sigma_{A+, \, H}(G_n)$, otherwise $\pi$ cannot be unitary. We first reduce to the weakly real case. By \eqref{eq nu,nsd^+}, we have
\begin{align*}
    \pi&= \tau_{nu,nsd} \times \tau_{nu,sd} \rtimes \pi_A\\
    &= \left(\bigtimes_{i \in I_{nu,nsd}^+ } u_{\rho_i}(a_i, b_i) \lvert \cdot \rvert^{x_i}\times u_{\rho_i^{\vee}}(a_i, b_i) \lvert \cdot \rvert^{x_i}\right ) \rtimes (\tau_{nu,sd} \times \theta(\pi_A) \rtimes X_{wr}(\pi_A))\\
    &= \left(\bigtimes_{i \in I_{nu,nsd}^+ } u_{\rho_i}(a_i, b_i) \lvert \cdot \rvert^{x_i}\times u_{\rho_i}(a_i, b_i) \lvert \cdot \rvert^{-x_i}\right)\times \theta(\pi_A) \rtimes (\tau_{nu,sd} \rtimes X_{wr}(\pi_A)).
\end{align*}
Since the first two terms are unitary by \cite{Tad86}, Lemma \ref{lem parabolic reduction}(c) implies that $\pi$ is unitary if and only if $\tau_{nu,sd} \rtimes X_{wr}(\pi_A)$ is unitary. Also, Theorem \ref{thm red from nu to gp} implies that $u_{\rho_i}(a_i,b_i) \rtimes \pi_{A}$ is irreducible for any $i \in I_{I_{nu,nsd}}$. It follows from the definition that $ \pi \in \Sigma_{A+, \, u}(G_n)$ if and only if $\tau_{nu,sd} \rtimes X_{wr}(\pi_A) \in \Sigma_{A+, \, u}(G_m)$ for some $m \leq n$. Therefore, we assume that $I_{nu,nsd}$ is empty and $\pi_A$ is weakly real in the rest of the proof.

Next, we reduce to the good parity case. Write $\pi_A= \tau_{bp} \rtimes \pi_{A,gp}$ by Theorem \ref{thm red from nu to gp}, where $\pi_{A,gp}$ is of good parity. Decompose $I_{nu,sd}= I_{nu,gp} \sqcup I_{nu,bp}$, where 
\[ I_{nu,gp}= \{i \in I_{nu,sd}\ | \ \rho_i \otimes S_{a_i} \otimes S_{b_i} \text{ is of good parity} \}.\]
Write $I_{nu,bp}=\{1,\ldots, r\}$. Theorem \ref{thm red from nu to gp} implies that 
\[ \bigtimes_{i=1}^r u_{\rho_i}(a_i,b_i)\lvert \cdot \rvert^{y_i} \rtimes \pi_A  \]
is irreducible for any $ 0 \leq y_i <\half{1}, i=1,\ldots, r$. Using the Jantzen decomposition (\cite[Theorem 9.3(6)]{Jan97}), we also obtain the irreducibility of 
\[ \Pi_{y_1,\ldots, y_r}= \bigtimes_{i=1}^r u_{\rho_i}(a_i,b_i)\lvert \cdot \rvert^{y_i} \rtimes \left(\bigtimes_{i \in I_{nu,gp}} u_{\rho_i}(a_i,b_i)\lvert \cdot \rvert^{x_i} \rtimes \pi_A\right).  \]
Note that $\Pi_{y_1,\ldots, y_r}$ is also Hermitian since it is weakly real. Therefore, Lemma \ref{lem parabolic reduction}(a) implies that $\pi= \Pi_{x_1,\ldots, x_r}$ is unitary if and only if 
\begin{align*}
     \Pi_{0,\ldots, 0}&= \bigtimes_{i=1}^r u_{\rho_i}(a_i,b_i) \rtimes \left(\bigtimes_{i \in I_{nu,gp}} u_{\rho_i}(a_i,b_i)\lvert \cdot \rvert^{x_i} \times \tau_{bp} \rtimes \pi_{A,gp} \right)\\
     &=\left( \bigtimes_{i=1}^r u_{\rho_i}(a_i,b_i)\right) \times \tau_{bp} \rtimes \left(\bigtimes_{i \in I_{nu,gp}} u_{\rho_i}(a_i,b_i)\lvert \cdot \rvert^{x_i} \rtimes \pi_{A,gp} \right)
\end{align*}
is unitary. Since the first two terms of the last line above are both unitary, Lemma \ref{lem parabolic reduction}(c) implies that $\Pi_{0,\ldots,0}$ is unitary if and only if  $\pi_{gp}:=\bigtimes_{i \in I_{nu,gp}} u_{\rho_i}(a_i,b_i)\lvert \cdot \rvert^{x_i} \rtimes \pi_{A,gp} $  is unitary. Again, Theorem \ref{thm red from nu to gp} implies that $\pi \in \Sigma_{A+, \, u}(G_n)$ if and only if $\pi_{gp} \in \Sigma_{A+, \, u}(G_m)$ for some $m\leq n$. Therefore, we assume that $I_{nu, bp}$ is empty and $\pi_A$ is of good parity in the rest of the proof.

Now we write $\pi_A=\pi(\EE)$. By Lemma \ref{lem parabolic reduction}(a), we may assume that $x_i = x_j$ if $ u_{\rho_i}(a_i,b_i) \cong u_{\rho_j}(a_j,b_j)$. Then we rewrite 
\begin{align*}
     \pi= \bigtimes_{i=1}^r (u_{\rho_i}(a_i,b_i)\lvert \cdot \rvert^{x_i})^{\times m_i} \rtimes \pi(\EE), 
\end{align*}
where $m_i$ denotes the multiplicity and $u_{\rho_i}(a_i,b_i) \not\cong u_{\rho_j}(a_j,b_j)$ for any $1 \leq i \neq j \leq r$. Assume that 
\[m_i = \begin{cases}
    2 m_i' +1 &\text{ if }1 \leq i \leq s,\\
    2m_i' &\text{ if } s< i\leq r,
\end{cases}\]
where $m_i' \in \Z_{\geq 0}$. Then 
\begin{align*}
    \pi& = \bigtimes_{i=1}^r ( u_{\rho_i}(a_i,b_i)\lvert \cdot \rvert^{x_i} \times u_{\rho_i}(a_i,b_i)\lvert \cdot \rvert^{x_i}   )^{\times m_i'} \rtimes  \left(\bigtimes_{i=1}^s ( u_{\rho_i}(a_i,b_i)\lvert \cdot \rvert^{x_i} \rtimes \pi(\EE) \right)\\
    &= \bigtimes_{i=1}^r ( u_{\rho_i}(a_i,b_i)\lvert \cdot \rvert^{x_i} \times u_{\rho_i}(a_i,b_i)\lvert \cdot \rvert^{-x_i}   )^{\times m_i'} \rtimes  \left(\bigtimes_{i=1}^s ( u_{\rho_i}(a_i,b_i)\lvert \cdot \rvert^{x_i} \rtimes \pi(\EE) \right).
\end{align*}
Since each factor in the first product is unitary, Lemma \ref{lem parabolic reduction}(c) implies that $\pi$ is unitary if and only if $\pi_{mf}:=\bigtimes_{i=1}^s ( u_{\rho_i}(a_i,b_i)\lvert \cdot \rvert^{x_i} \rtimes \pi(\EE) $ is unitary. Clearly, $\pi \in \Sigma_{A+, \, u}(G_n)$ if and only if $\pi_{mf} \in \Sigma_{A+, \, u}(G_m)$ for some $m\leq n$. Therefore, we assume that 
\[ \pi= \bigtimes_{i=1}^r u_{\rho_i}(a_i,b_i)\lvert \cdot \rvert^{x_i} \rtimes \pi(\EE), \]
where $(\rho_i,a_i,b_i)$ are pairwise distinct
in the rest of the proof.

Now suppose that $\pi \in \Sigma_{A+, \, u}(G_n)$. That is, $u_{\rho_i}(a_i, b_i) \rtimes \pi(\EE)$ is irreducible for any $1 \leq i \leq r$. Then Corollary \ref{cor irreducible} implies that 
\[ \Pi_{y_1,\ldots, y_r}:= \bigtimes_{i=1}^r u_{\rho_i}(a_i,b_i)\lvert \cdot \rvert^{y_i} \rtimes \pi(\EE) \]
is irreducible for any $\{0 \leq y_i < \half{1}\}_{i=1,\ldots, r}$. Thus, Lemma \ref{lem parabolic reduction}(a) implies that $\pi=\Pi_{x_1,\ldots, x_r}$ is unitary.

Finally, suppose that $\pi \not\in \Sigma_{A+, \, u}(G_n)$. After rearrangement, we assume that for some $s \geq  1$,
\begin{align*}
    \text{length}( u_{\rho_i}(a_i, b_i) \rtimes \pi(\EE))
         &> 1 \ \text{ if }1 \leq i \leq s \ \mathrm{and} \\
         \text{length}( u_{\rho_i}(a_i, b_i) \rtimes \pi(\EE))&=1 \ \text{ if }s+1 \leq i \leq r.
\end{align*}
We apply induction on $s$ to show that $\pi$ is not unitary. Let 
\[ \pi(\EE') := \bigtimes_{i=s+1}^r u_{\rho_i}(a_i,b_i) \rtimes \pi(\EE). \]
Note that this is irreducible by Corollary \ref{cor irreducible}.
By Lemma \ref{lem parabolic reduction}(a), $\pi$ is not unitary if and only if 
\[ \pi':= \bigtimes_{i=1}^s u_{\rho_i}(a_i,b_i)\lvert \cdot \rvert^{x_i} \rtimes \pi(\EE') \]
is not unitary. Moreover, for any $ 1 \leq j \leq s$, the parabolic induction $u_{\rho_j}(a_j,b_j)\rtimes \pi(\EE')$ is still reducible since
\begin{align*}
    \text{length}(u_{\rho_j}(a_j,b_j)\rtimes \pi(\EE') ) &=\text{length}( \bigtimes_{i=s+1}^r u_{\rho_i}(a_i,b_i)\lvert \cdot \rvert^{x_i}  \times u_{\rho_j}(a_j,b_j)\rtimes \pi(\EE) )\\
    &\geq \text{length}(  u_{\rho_j}(a_j,b_j)\rtimes \pi(\EE))\\
    &\geq 2.
\end{align*}

If $s=1$, the non-unitarity of 
\[u_{\rho_1}(a_1,b_1)\lvert \cdot \rvert^{x_1'}\rtimes \pi(\EE')\]
follows from Corollary \ref{cor base case non-unitary}. Suppose $s > 1$. By Theorem \ref{thm reduction}, we may assume that there exists an irreducible subquotient $\pi_s$ of $u_{\rho_s}(a_s,b_s) \rtimes \pi(\EE')$ such that $u_{\rho_1}(a_1, b_1) \rtimes \pi_s$ is still reducible. Now Lemma \ref{lem parabolic reduction}(b) implies that $\pi'$ is not unitary if 
\[ \pi'':= \bigtimes_{i=1}^{s-1} u_{\rho_i}(a_i,b_i)\lvert \cdot \rvert^{x_i} \rtimes \pi_s \]
is not unitary. Note that $\pi'' \not\in \Sigma_{A+, \, u}(G_n)$ since $u_{\rho_1}(a_1, b_1) \rtimes \pi_s$ reduces by our choice of $\pi_s$.  Therefore, the induction hypothesis guarantees that $\pi''$ is not unitary. We conclude that $\pi$ is also not unitary. This completes the proof of the theorem.
\qed

\section{\texorpdfstring{Proof of Theorem \ref{thm Ramanujan 1 intro}}{}}\label{sec-Piv}

In this section, we prove Theorem \ref{thm Ramanujan 1 intro} which we recall now.
Let $\Pi$ be a self-dual irreducible cuspidal automorphic representation of $\GL_N(\A_k)$. Let $v$ be a finite place of $k$. By the classification of the unitary dual of $\GL_N(k_v)$ in \cite{Tad86}, we can write
\begin{align}\label{eq Piv sec 7}
    \Pi_v\cong u_{\rho_1}(a_1,1)\lvert \cdot \rvert^{x_1} \times \cdots \times u_{\rho_s}(a_s,1) \lvert \cdot \rvert^{x_s} 
    \times \pi_{t}
    \times u_{\rho_s}(a_s,1) \lvert \cdot \rvert^{-x_s} \times \cdots \times u_{\rho_1}(a_1,1) \lvert \cdot \rvert^{-x_1},
\end{align}
where $\rho_i$ are unitary supercuspidal representations, $0<x_i<\half{1}$, and $\pi_t$ is a tempered generic representation of the form
\[ \pi_t= \bigtimes_{i=s+1}^r u_{\rho_i}(a_i,1). \]
We say $u_{\rho_i}(a_i,1)$ is of orthogonal type (resp. symplectic type) if
\begin{itemize}
    \item $\rho_i$ is of orthogonal type and $a_i$ is odd (resp. even), or
    \item $\rho_i$ is of symplectic type and $a_i$ is even (resp. odd).
\end{itemize}
We say $u_{\rho}(a,1)$ is a factor of $\pi_t$ if there exists a $s+1 \leq i\leq r $ such that $\rho_i \cong \rho$ and $a_i=a$.

\begin{thm}[{Theorem \ref{thm Ramanujan 1 intro}}]\label{thm Ramanujan}
Continue the above setting. Suppose that $\Pi$ is of orthogonal type (resp. symplectic type). For any $1 \leq i \leq s$, if $u_{\rho_i}(a_i,1)$ is of orthogonal type (resp. symplectic type) and $u_{\rho_i}(a_i,1)$ is not a factor of $\pi_t$, then the set 
\[\{ 1 \leq j \leq s\ | \ \rho_j \cong \rho_i \text{ and }a_j=a_i\}\]
        has even cardinality. 
\end{thm}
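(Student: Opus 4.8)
The plan is to deduce Theorem~\ref{thm Ramanujan} from Theorem~\ref{thm A+,u} (equivalently Theorem~\ref{thm main}) by transferring $\Pi$ to a classical group via the backward direction of the Langlands functorial lift established by Arthur in \cite{Art13}. Concretely, suppose $\Pi$ is of orthogonal type with trivial central character. Then $\phi_\Pi := \bigoplus_{i=1}^s (\rho_i|\cdot|^{x_i}\otimes S_{a_i} + \rho_i^\vee|\cdot|^{-x_i}\otimes S_{a_i}) \oplus \bigoplus_{i=s+1}^r \rho_i\otimes S_{a_i}$ (viewed with the trivial $S_b$ factor, $b=1$) is a self-dual representation of $W_{k_v}\times\SL_2(\BC)$ of dimension $N$ whose "global" incarnation is orthogonal, so it defines a local Arthur parameter $\psi$ for $G=\Sp_{N-1}(k_v)$ when $N$ is odd, and for $\SO_N(k_v)$-type considerations when $N$ is even; the symplectic-type case is handled symmetrically with the roles of the two groups swapped. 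The point is that the hypothesis "$\Pi$ is of orthogonal type" forces the global parity condition that makes $\psi \in \Psi^+_{\mathrm{unit}}(G)$ for the appropriate split classical group $G$, and the local packet $\Pi_\psi$ is nonempty.

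The key steps, in order, would be: (1) Set up the correspondence between the data $(\rho_i,a_i,x_i)$ appearing in \eqref{eq Piv sec 7} and a local Arthur parameter $\psi\in\Psi^+_{1/2}(G)$, checking that the orthogonal-type (resp.\ symplectic-type) hypothesis on $\Pi$ together with trivial central character is exactly what guarantees $\psi$ has the right self-duality to factor through $\RG^\vee(\BC)$ with $G$ of the relevant type, and that $u_{\rho_i}(a_i,1)$ being "of orthogonal type" in the sense defined just before the theorem is precisely the condition that the summand $\rho_i\otimes S_{a_i}\otimes S_1$ is of good parity for that $G$. (2) Observe that the $I_{nu}$ of $\psi$ (in the notation of \eqref{eq general lap}) is $\{1,\dots,s\}$ with $b_i=1$, and $\psi_u$ is the tempered parameter $\bigoplus_{i=s+1}^r\rho_i\otimes S_{a_i}$; pick any $\pi_A\in\Pi_{\psi_u}$ — e.g.\ the generic member, which exists since $\psi_u$ is tempered and corresponds to $\pi_t$ — and form $\pi := \bigtimes_{i=1}^s u_{\rho_i}(a_i,1)|\cdot|^{x_i}\rtimes\pi_A\in\Pi_\psi$. (3) Since $\Pi$ is cuspidal, $\Pi_v$ is generic and unitary, and by the compatibility of the functorial transfer with parabolic induction the representation $\pi$ is a constituent of a genuinely unitary induced representation; more precisely, $\pi$ lies in the image of a unitary (weakly real after the $I_{nu,nsd}$ reduction) parabolic induction from a unitary datum, so $\pi$ is unitary, hence $\pi\in\Pi_{A+,u}(G)$. (4) Apply Theorem~\ref{thm A+,u}: for each $i\in\{1,\dots,s\}$ with $u_{\rho_i}(a_i,1)\rtimes\pi_A$ reducible, the set $\{j : \rho_j\cong\rho_i,\ a_j=a_i,\ b_j=b_i=1\}$ has even cardinality — which is exactly the claimed set. (5) Finally, translate "$u_{\rho_i}(a_i,1)\rtimes\pi_A$ is irreducible" into "$u_{\rho_i}(a_i,1)$ is a factor of $\pi_t$": when $\rho_i\otimes S_{a_i}\otimes S_1$ is of good parity (i.e.\ $u_{\rho_i}(a_i,1)$ is of the same type as $\Pi$) and $\pi_A$ is the generic member of the tempered packet $\Pi_{\psi_u}$, the induced representation $u_{\rho_i}(a_i,1)\rtimes\pi_A$ is irreducible if and only if $\rho_i\otimes S_{a_i}$ already occurs in $\psi_u$, i.e.\ $u_{\rho_i}(a_i,1)$ is a factor of $\pi_t$; this uses the standard reducibility criterion for generic (tempered) representations recalled via the combinatorics of \S\ref{sec unitary induction of Arthur type} (or directly the Jantzen/Goldberg reducibility of $\rho\otimes S_a \rtimes (\text{generic tempered})$).

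I expect the main obstacle to be Step (5), namely pinning down the reducibility of $u_{\rho_i}(a_i,1)\rtimes\pi_A$ for the \emph{specific} choice of $\pi_A$ and matching it cleanly with "$u_{\rho_i}(a_i,1)$ is a factor of $\pi_t$", including the delicate case where $u_{\rho_i}(a_i,1)$ is of the \emph{opposite} type to $\Pi$ (good parity fails, the parameter summand is not of good parity, and the theorem's hypothesis is vacuous so there is nothing to prove) versus the case where it is of the same type but \emph{is} a factor of $\pi_t$ (then the relevant induction is irreducible and again the conclusion is vacuous). A secondary subtlety is making sure the transfer $G\leftrightarrow\GL_N$ is set up with the correct target group ($\Sp$ vs.\ $\SO$) according to whether $\Pi$ is of orthogonal or symplectic type and the parity of $N$, and that the "trivial central character" normalization is used only to arrange $\omega_\Pi=1$ so the $\Sym^2$/$\wedge^2$ dichotomy is clean; once those bookkeeping points are fixed, Steps (1)--(4) are essentially formal consequences of the material already developed in this paper and in \cite{Art13}, \cite{Ato22d}.
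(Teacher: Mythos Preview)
Your plan has the right overall shape (transfer to a classical group, globalize, apply Theorem~\ref{thm A+,u}), and in the symplectic case it essentially matches what the paper does. But there is a genuine gap in the orthogonal case, and it is precisely the point you flag as ``bookkeeping'': when $\Pi$ is of orthogonal type and $N$ is even, the $L$-parameter with $b=1$ factors through $\SO_N(\BC)$, so the relevant classical group is a quasi-split \emph{even} special orthogonal group. Theorem~\ref{thm A+,u} as proved in this paper covers only $\Sp_{2n}$ and split $\SO_{2n+1}$; the even orthogonal case is deferred to \cite{HLL25}. So your Step~(1) simply fails to produce a group to which the paper's main theorem applies.

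The paper's key maneuver, which you are missing, is to use the global Arthur parameter $\psi=\Pi\boxtimes S_2$ rather than $\Pi\boxtimes S_1$ in the orthogonal case. Since $\Pi$ is orthogonal and $S_2$ is symplectic, $\Pi\boxtimes S_2$ is symplectic of dimension $2N$, hence a parameter for $\SO_{2N+1}$ regardless of the parity of $N$, and Theorem~\ref{thm A+,u} applies. The cost is that one must now analyze $u_{\rho_i}(a_i,2)\rtimes\pi(\EE)$ for a specific $\pi(\EE)\in\Pi_{(\psi_t)_v}$ (the one with trivial character, chosen so the globalization goes through since the global component group of $\psi$ is trivial), and the paper verifies its reducibility directly from Theorem~\ref{thm Atobe unitary induction}. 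Your Step~(3) is also underspecified: unitarity of the local $\pi$ does not follow from ``compatibility of functorial transfer''; the paper obtains it by invoking \cite[Theorem~1.5.2]{Art13} to produce a discrete automorphic representation $\Pi'$ on $\SO_{2N+1}(\A_k)$ with $\Pi'_v$ equal to the chosen local member, which forces $\Pi'_v$ to be unitary. Your Step~(5) reducibility criterion for $b=1$ is correct and is exactly what the paper uses in the symplectic case.
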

\begin{proof}
We first prove the case that $\Pi$ is of orthogonal type. Consider the global Arthur parameter $\psi= \Pi \boxtimes S_2 $ of $ \SO_{2N+1}(\A_k)$. The corresponding local Arthur parameter at $v$ is
 \[ \psi_v= \bigoplus_{i=1}^s (\rho_i \lvert \cdot \rvert^{x_i} \otimes S_{a_i} \otimes S_{2}+ \rho_i \lvert \cdot \rvert^{-x_i} \otimes S_{a_i} \otimes S_{2}) + (\psi_{t})_v\]
 where
  \[ (\psi_t)_v= \bigoplus_{i=s+1}^r \rho_i \otimes S_{a_i}\otimes S_2\]
is a local Arthur parameter of some $\SO_{2m+1}(k_v)$. For simplicity of the notation, we assume that $\rho_i \otimes S_{a_i}\otimes S_2$ is of orthogonal type for any $s+1 \leq i \leq r$. The general case follows easily by Theorem \ref{thm red from nu to gp}.

Regard $I= \{s+1, \ldots, r\}$ as a total ordered set. Relabeling if necessary, we may assume that $a_i$ is non-decreasing. For each $\rho$, define $I_{\rho}:= \{i \in I\ | \ \rho_i \cong \rho\}$ with the total order inherited from $I$. Define
\[ \EE:= \bigcup_{\rho} \{ ([\half{a_i}, \half{a_i-2}]_{\rho}, 1,1) \}_{i \in I_{\rho}}. \]
Then it is clear that $\pi(\EE)$ is nonzero, and it is the only representation in $\Pi_{(\psi_{t})_v}$ such that $\langle \cdot, \pi(\EE)\rangle_{\psi}$ is the trivial character. 

Now {for a fixed $i\in\{1,\dots,s\},$} suppose that $ u_{\rho_i}(a_i,1)$ is of orthogonal type and not a factor of $\pi_t$. It is an immediate consequence of Theorem \ref{thm Atobe unitary induction} that 
\[ u_{\rho_i}(a_i,2) \rtimes \pi(\EE) = \begin{cases}
    \pi(\EE_1) \oplus \pi(\EE_2) &\text{ if }a_i=1,\\
\pi(\EE_1) \oplus \pi(\EE_2) \oplus \pi(\EE_3) &\text{ if }a_i>1,
\end{cases}\]
where $\EE_1, \EE_2, \EE_3$ are obtained from $\EE$ by inserting 
   \[ \bordermatrix{ & \half{a_i-2} &\half{a_i}   \cr 
& \oplus& \ominus  \cr 
& \ominus & \oplus  }_{\rho_i},\ \  \bordermatrix{  & \half{a_i-2} &\half{a_i}    \cr 
& \lhd & \rhd  \cr 
& \lhd & \rhd  }_{\rho_i},\ \ \bordermatrix{  & \half{a_i-2} &\half{a_i}    \cr 
& \ominus & \oplus  \cr 
& \oplus & \ominus  }_{\rho_i}, \]
respectively. In particular, the above parabolic induction is reducible. 
  
Now we take any discrete automorphic representation $\Pi'$ of $\SO_{2N+1}(\A_k)$ in the global Arthur packet of $\psi$ such that 
\[ \Pi_v'= \bigtimes_{i=1}^s u_{\rho_i}(a_i,2) \lvert \cdot \rvert^{x_i} \rtimes \pi(\EE).\]
Such a $\Pi'$ exists by \cite[Theorem 1.5.2]{Art13}.
{Indeed, the global component group attached to the global Arthur parameter $\psi$ is trivial. Thus, for any place $v'\neq v$, we may choose $\pi_{v'}\in\Pi_{\phi_{\psi_{v'}}}$ corresponding to the trivial character. Then $\pi=\otimes'_{v}\pi_{v}$ is a discrete automorphic representation lying in the global Arthur packet of $\psi$  by \cite[Theorem 1.5.2]{Art13}.}
As $\Pi_v'$ is unitary, Theorem \ref{thm A+,u} implies that the set 
\[\{ 1 \leq j \leq s\ | \ \rho_j \cong \rho_i \text{ and }a_j=a_i\}\]
must have even cardinality. This completes the proof of this case.

Next, we deal with the case that $N=2n$ is even and $\Pi$ is of symplectic type. This time we look at $\psi= \Pi\boxtimes 1$, a global Arthur parameter of $\SO_{2n+1}(\A_k)$. For simplicity of notation, we again assume that the local Arthur parameter 
\[ (\psi_t)_v= \bigoplus_{i=s+1}^r \rho_i \otimes S_{a_i}\otimes S_1\]
is of good parity. 

Let $I=\{s+1,\ldots, r\}$ and assume that $a_i$ is non-decreasing. Let $I_{\rho}= \{i \in I \ | \ \rho_i \cong \rho\}$. Then any representation in the packet $\Pi_{(\psi_t)_v}$ is of the form $\pi(\EE)$ for some
\[ \EE= \bigcup_{\rho} \{ ([\half{a_i-1}, \half{a_i-1}]_{\rho}, 0, \eta_i)\}_{i \in I_{\rho}}.\]

Now suppose that $ u_{\rho_i}(a_i,1)$ is of symplectic type and not a factor of $\pi_t$. Then
\[ u_{\rho_i}(a_i,1) \rtimes \pi(\EE) = \pi(\EE_1) \oplus \pi(\EE_2),\]
where $\EE_1, \EE_2$ are obtained from $\EE$ by inserting
    \begin{align*}
        \bordermatrix{ & \half{a_i-1}    \cr 
& \oplus\cr 
&\oplus  }_{\rho_i},\ \ 
\bordermatrix{ & \half{a_i-1}   \cr 
& \ominus  \cr 
& \ominus  }_{\rho_i},
    \end{align*}
respectively. Then the same argument in the previous case (for example, taking all $\eta_i=1$ then globalizing) shows that the set  
\[\{ 1 \leq j \leq s\ | \ \rho_j \cong \rho_i \text{ and }a_j=a_i\}\]
must have even multiplicity. This completes the proof of the theorem.
\end{proof}

Next, we consider several lower rank cases. 

\subsection{The case of \texorpdfstring{$N=2$}{}}

Let $\Pi$ be an irreducible self-dual cuspidal automorphic representation of $\GL_2(\A_k)$.

     Since $\Pi_v$ is a unitary representation of $\GL_2(k_v),$ by the classification of the unitary dual of $\GL_2(k_v)$ in \cite{Tad86}, there are four possible cases of $\Pi_{v}$: Here $\chi, \chi_1, \chi_2$ are unitary characters of $k_v^{\times}$.
\begin{enumerate}
    \item [(i)] $\Pi_v= u_{\chi}(2,1)$, or
    \item [(ii)] $\Pi_v= u_{\chi}(1,2)$, or
    \item [(iii)] $\Pi_{v}= \chi \lvert \cdot \rvert^{x} \times \chi\lvert \cdot \rvert^{-x}$, where $0 <x<\half{1}$,
    \item [(iv)] $\Pi_v= \chi_1 \times \chi_2$, or
    \item [(v)] $\Pi_v=\rho$ is a unitary supercuspidal representation of $\GL_2(k_v)$.
\end{enumerate}

Assume that $\Pi$ is of orthogonal type. 
Cases (i) and (v) are already tempered. Case (ii) is impossible since it is not generic. Also, in Case (iv), we must have $\chi_2= \chi_1^{\vee}$ in order that the $L$-parameter of $\Pi_v$ factors through the split even orthogonal group, hence it is tempered. 
Now, we consider case (iii). 
Let $\phi_\chi$ be the local $L$-parameter for $\chi.$ Then the $L$-parameter of $\Pi_v$ is 
\[
\phi=\phi_\chi\lvert \cdot \rvert^{x}+ \phi_\chi\lvert \cdot \rvert^{-x}.
\]
Since $\Pi_v$ is of orthogonal type, this $L$-parameter must factor through $\RO_{2}(\BC)$. Therefore, we must have $\chi \cong \chi^{\vee}=\chi^{-1}$, or equivalently, $\chi^2=1$, i.e. $\chi$ is quadratic. This contradicts Theorem \ref{thm Ramanujan 1 intro}. We conclude that Case (iii) is also impossible.

Therefore, we can make the conclusion as follows: 
{\it 
    Suppose that $\Pi$ is an irreducible cuspidal automorphic representation of $\GL_2(\mathbb{A}_k)$ of orthogonal type. 
    Then for any local place $v < \infty$, $\Pi_v$ is tempered.}

We remark that when $\Pi$ is of symplectic type, Theorem \ref{thm Ramanujan 1 intro} cannot be applied to rule out case (iii).

\subsection{The case of \texorpdfstring{$N=3$}{}} 
 Suppose that $\Pi$ is an irreducible self-dual cuspidal automorphic representation of $\GL_3(\mathbb{A}_k)$. Then $\Pi$ is automatically of orthogonal type. For any $v < \infty$, $\Pi_v$ is generic. From the classification of the unitary dual for $\GL_3(k_v)$, it is of one of the following forms. 
\begin{itemize}
    \item [(i)]  $ \rho_3$.
    \item [(ii)] $ \rho_2 \times \chi$.
    \item [(iii)] $u_{\chi}(3,1)$.
    \item [(iv)] $u_{\chi_1}(2,1) \times \chi_2$.
    \item [(v)] $\chi_1 \times \chi_2 \times \chi_3$.
    \item [(vi)] $\chi \lvert \cdot \rvert^x \times 1 \times \chi \lvert \cdot \rvert^{-x},$ where  $\chi^2=1$ and $0 <  x<\half{1}$.
\end{itemize}
Here $\chi, \chi_1,\chi_2, \chi_3$ are unitary characters of $k_v^{\times}$, and $\rho_3, \rho_2$ are unitary supercuspidal representations of $\GL_3(k_v)$ and $\GL_2(k_v)$ respectively. 
 
 For any $v < \infty$, assume
we are in case (vi). Applying Theorem \ref{thm Ramanujan 1 intro}, we can conclude that in Case (vi) we must have $\chi=1$.
In particular, if $\Pi_v$ is ramified, then $\Pi_v$ is tempered.

\subsection{More examples}

    One can apply the same strategy for $\GL_N(\A_k)$ for $N >3$. However, the list becomes more complicated as $N$ increases. For example, for $N=4$, the possible non-tempered local components of a self-dual cuspidal automorphic representation are of one of the following forms: 
\begin{enumerate}
\item [(i)] $\rho\lvert \cdot \rvert^{x} \times \rho\lvert \cdot \rvert^{-x} $.
    \item [(ii)] $ u_{\chi}(2,1)\lvert \cdot \rvert^{x} \times u_{\chi}(2,1)\lvert \cdot \rvert^{-x}$.
    \item [(iii)] $\chi\lvert \cdot \rvert^{x} \times \rho \times \chi\lvert \cdot \rvert^{-x}$.
    \item [(iv)] $\chi_1\lvert \cdot \rvert^{x_1} \times u_{\chi_2}(2,1) \times \chi_1\lvert \cdot \rvert^{-x_1}$.
    \item [(v)] $\chi_1\lvert \cdot \rvert^{x_1} \times \chi_2 \times \chi_3 \times \chi_1\lvert \cdot \rvert^{-x_1}$.
    \item [(vi)] $\chi_1\lvert \cdot \rvert^{x_1} \times \chi_2 \lvert \cdot \rvert^{x_2} \times \chi_2\lvert \cdot \rvert^{-x_2} \times \chi_1\lvert \cdot \rvert^{-x_1}$.
\end{enumerate}
Here $\chi, \chi_1,\chi_2, \chi_3$ are unitary characters of $\GL_1(k_v)$, $\rho$ is a unitary supercuspidal representation of $\GL_2(k_v)$ and $0 \leq x,x_1,x_2<\half{1}$. Since the local representation is self-dual, we have $\rho$ and $\chi$ are self-dual, $\chi_1$ is self-dual if $x_1 \neq 0$ in Cases (iv), (v). Finally, in Case (vi), $\chi_1$ and $\chi_2$ are self-dual unless $\chi_1= \chi_2^{\vee}$ and $x_2 =\pm x_1$.  

Let $\Pi$ be an irreducible self-dual cuspidal automorphic representation of $\GL_4(\A_k)$. If $\Pi$ is of orthogonal type, applying Theorem \ref{thm Ramanujan 1 intro}, we can conclude that the exponents will be zero in the following cases:
\begin{itemize}
    \item Case (i) when $\rho$ is of  orthogonal type;
    \item  Case (iii);
    \item Case (v) when $\chi_1 \neq \chi_2$ and  $\chi_1 \neq \chi_3$;
    \item Case (vi) when $\chi_1,\chi_2$ are self-dual and  $\chi_1 \neq \chi_2$.
\end{itemize}
Note that Case (iv) does not happen since the local representation of $\GL_{4}(k_v)$ is not of orthogonal type. If $\Pi$ is of symplectic type, we can conclude that the exponents will be zero in the following cases:
\begin{itemize}
    \item Case (i) when $\rho$ is of  symplectic type;
    \item  Case (ii).
\end{itemize}


\begin{thebibliography}{9999999999}


\bibitem[Art89]{Art89} J. Arthur, Unipotent automorphic representations: conjectures. 
{\em Ast{\'e}risque}. pp. 13-71 (1989), Orbites unipotentes et repr{\' e}sentations, II.

\bibitem[Art13]{Art13}
J. Arthur,
{\it The endoscopic classification of representations: Orthogonal and Symplectic groups.}
Colloquium Publication Vol. \textbf{61}, 2013,
American Mathematical Society.


\bibitem[Ato22a]{Ato20b} H. Atobe, Construction of local A-packets.
{\em J. Reine Angew. Math.}, vol. 2022, no. 790, 2022, pp. 1-51.





\bibitem[Ato22b]{Ato22d} 
H. Atobe, On the socles of certain parabolically induced representations of p-adic classical groups. 
{\em Represent. Theory}. \textbf{26} pp. 515-541 (2022).

\bibitem[AGIKMS24]{AGIKMS24} H. Atobe, W. T. Gan, A. Ichino, T. Kaletha, A. M{\'i}nguez, and S. W. Shin, Local Intertwining Relations and Co-tempered A-packets of Classical Groups. (2024),
arXiv:2410.13504.

\bibitem[AM25]{AM25} H. Atobe and A. M{\'i}nguez,
Unitary dual of $p$-adic split $\SO_{2n+1}$ and $\Sp_{2n}$: The good parity case (and slightly beyond). 
Preprint. 2025. arXiv:2505.09991. 

\bibitem[BS25]{BS25} B. Bo{\v s}njak and A. Stadler,
Irreducibility of the parabolic induction of essentially Speh representations and a representation of Arthur type over a $p$-adic field. 
Preprint. 2025. arXiv:2504.01226. 


\bibitem[GGP12]{GGP12} W. T. Gan, B. H. Gross, and D. Prasad, Symplectic local root numbers, central critical L values, and restriction problems in the representation theory of classical groups. {\em Ast{\'e}risque}. pp. 1-109 (2012), Sur les conjectures de Gross et Prasad. I.

\bibitem[HT01]{HT01}M. Harris and R. Taylor, 
{\em The geometry and cohomology of some simple Shimura varieties.} (Princeton University Press, 2001).




\bibitem[HJLLZ24]{HJLLZ24} 
A. Hazeltine, D. Jiang, B. Liu, C.-H. Lo, and Q. Zhang, {\it Arthur representations and unitary dual for classical groups}, arXiv:2410.11806.


\bibitem[HLL25]{HLL25} A. Hazeltine, B. Liu, and C.-H. Lo, 
On the local theta correspondence and intersection of local Arthur packets for even orthogonal groups. 
Preprint. 2025. 



\bibitem[Hen00]{Hen00} G. Henniart, Une preuve simple des conjectures de Langlands pour $GL(n)$ sur un corps p-adique. {\em Invent. Math.} \textbf{139} (2000), 439-455.

\bibitem[Jan97]{Jan97} C.~Jantzen, On supports of induced representations for symplectic and odd-orthogonal groups. {\em Amer. J. Math.} \textbf{119}, 1213-1262 (1997).



\bibitem[M{\oe}06a]{Moe06a}
C.~M{\oe}glin, Paquets d'Arthur pour les groupes classiques; point de vue combinatoire. (2006),
arXiv:math/0610189v1. 

\bibitem[M{\oe}06b]{Moe06b}
{C. M{\oe}glin}, 
Sur certains paquets d'Arthur et involution d'Aubert-Schneider-Stuhler g{\'e}n{\'e}ralis{\'e}e. 
\emph{Represent. Theory}  {\bf10}, (2006), 86-129.


\bibitem[Moe09]{Moe09a}
{C. M{\oe}glin}, 
\emph{Paquets d'Arthur discrets pour un groupe classique $p$-adique.} 
{\it Automorphic forms and L-functions II. Local aspects}, 179-257, 
\emph{Contemp. Math.}, {\bf489}, \emph{Israel Math. Conf. Proc.}, 
{\it Amer. Math. Soc., Providence, RI}, (2009).


\bibitem[M{\oe}10]{Moe10}
{C. M{\oe}glin}, 
Holomorphie des op{\'e}rateurs d'entrelacement normalis{\'e}s {\`a} l'aide des param{\`e}tres d'Arthur. 
\emph{Canad. J. Math.} {\bf62} (2010), no.~6, 1340-1386.

\bibitem[M{\oe}11]{Moe11a}
{C. M{\oe}glin}, 
\emph{Multiplicit{\'e} $1$ dans les paquets d'Arthur aux places $p$-adiques.} 
On certain $L$-functions, 333-374, 
Clay Math.~Proc., {\bf13},  Amer.~Math.~Soc., Providence, RI, (2011).

\bibitem[M{\oe}11b]{Moe11b} C. M{\oe}glin, Image des op{\'e}rateurs d'entrelacements normalis{\'e}s et p{\^ o}les des s{\'e}ries d'Eisenstein. {\em Adv. Math.} \textbf{228}  (2011), 1068-1134.




\bibitem[MT11]{MT11}
G. Mui{\'c}, M. Tadi{\'c}.
Unramified unitary duals for split classical p-adic groups; the
topology and isolated representations. 
{\em On certain L-functions}. Clay Math. Proc.,
vol. 13, pp. 375-438. 
{\it Amer. Math. Soc.}, Providence (2011).

\bibitem[Sar05]{Sar05}
P. Sarnak, 
Notes on the generalized Ramanujan conjectures.
{\em Harmonic analysis, the trace formula, and Shimura varieties}, 659-685.
Clay Math. Proc., 4 American Mathematical Society, Providence, RI, 2005

\bibitem[Sch13]{Sch13}
P. Scholze, The local Langlands correspondence for $\GL_n$ over p-adic fields. 
{\em Invent. Math.} \textbf{192}, (2013), 663-715.


\bibitem[Tad86]{Tad86} M. Tadi{\'c},
Classification of unitary representations in irreducible representations of general linear group (non-Archimedean case).
{\it Ann. Sci. {\'E}cole Norm. Sup.} (4)19(1986), no.3, 335-382.


\bibitem[Tad93]{Tad93} M. Tadi{\'c}, An external approach to unitary representations. 
{\em Bull. Amer. Math. Soc. (N.S.)}. \textbf{28}, 215-252 (1993)

\bibitem[Tad09]{Tad09a} M. Tadi{\'c}, On reducibility and unitarizability for classical p-adic groups, some general results. 
{\em Canad. J. Math.} \textbf{61}, 427-450 (2009)



\bibitem[Xu21a]{Xu21a} B. Xu, Nonarchimedean components of non-endoscopic automorphic representations for quasisplit $Sp(N)$ and $O(N)$. 
{\em Math. Z.} \textbf{297}, 885-921 (2021).

\bibitem[Xu21b]{Xu21b} B. Xu, A combinatorial solution to M{\oe}glin's parametrization of Arthur packets for $p$-adic quasisplit $Sp(N)$ and $O(N)$. 
{\em J. Inst. Math. Jussieu}. \textbf{20}, 1091-1204 (2021).






\end{thebibliography}
\end{document}